\long\def\@savemarbox#1#2{\global\setbox#1\vtop{\hsize\marginparwidth 
  \@parboxrestore\tiny\raggedright #2}}
\newcommand\lref[1]{\ref{#1}%
\@ifundefined{r@DisplaY #1}{}{ (#1)}}
\newcommand\fakelabel[2]{\@bsphack\if@filesw {\let\thepage\relax
   \newcommand\protect{\noexpand\noexpand\noexpand}%
\xdef\@gtempa{\write\@auxout{\string
      \newlabel{#1}{{#2}{\thepage}}}}}\@gtempa
   \if@nobreak \ifvmode\nobreak\fi\fi\fi\@esphack}
\def\Empty{}
\newcommand\oplabel[1]{
  \def\OpArg{#1} \ifx \OpArg\Empty {} \else
        \label{#1}
  \fi}
\newtheorem{theoremSt}{Theorem}[section]
\newtheorem{exampleSt}[theoremSt]{Example}
\newtheorem{exerciseSt}[theoremSt]{Exercise}
\newcommand\MakeStEnv[1]{
  \newenvironment{#1}[1]{
  \begin{#1St} \oplabel{##1}%
  \global\def\CrntSt{\thetheoremSt}%
}{ 
  \end{#1St} }
  \newenvironment{#1+}[1]{
  \begin{#1St} \label{##1}%
  \label{DisplaY ##1}%
  \global\def\CrntSt{\thetheoremSt}%
  \def\Labl{##1}\ifx\Labl\Empty{} \else {\em (\Labl)\,}\fi%
}{ 
  \end{#1St} }
}
\long\def\state#1#2{
\medskip\par\noindent
{\bf #1} 
{\it #2}
\par\medskip
}
\long\def\realfig#1#2#3{
\begin{figure}[htbp]
{\sf
\centerline{\psfig{figure=#1,height=#3}}
\caption[#1]{#2}
\oplabel{#1}}
\end{figure}}
\newlength{\saveu}
\newenvironment{pf*}[1]{%
 \begin{proof}[#1]%
}{ 
 \end{proof}
}
\newcommand{\finishproof}[1]{ 
  \def\FPArg{#1}
  \ifx\FPArg\Empty
        \newcommand\FPArg{\CrntSt}  \fi
  \smallbreak\noindent\makebox[\textwidth]{\hfill\fbox{\FPArg}}
  \medbreak\noindent
}
\newcommand\AAA{{\mathcal A}}
\newcommand\CC{{\mathcal C}}
\newcommand\FF{{\mathcal F}}
\newcommand\KK{{\mathcal K}}
\newcommand\LL{{\mathcal L}}
\newcommand\MM{{\mathcal M}}
\newcommand\NN{{\mathcal N}}
\newcommand\PP{{\mathcal P}}
\newcommand\SSS{{\mathcal S}}
\newcommand\PMF{{\PP\kern-2pt\MM\FF}}
\newcommand\ML{{\MM\LL}}
\newcommand\PML{{\PP\kern-2pt\MM\LL}}
\newcommand\half{{\textstyle{\frac12}}}
\newcommand\Mod{\operatorname{Mod}}
\newcommand\ep{\epsilon}
\newcommand\union{\cup}
\newcommand\intersect{\cap}
\newcommand\bbR{{\mathord{\text{I\kern-2pt R}}}}        
\newcommand\bbH{{\mathord{\text{I\kern-2pt H}}}}        
\newcommand\Z{{\mathbb Z}}
\newcommand\R{{\mathbb R}}
\newcommand\PSL[1]{\text{PSL}_{#1}}
\newcommand\bigrightarrow[1]{\hbox to #1{\rightarrowfill}}
\newcommand\bigleftarrow[1]{\hbox to #1{\leftarrowfill}}
\newcommand\boundary{\partial}
\newcommand\semidir{\mathrel{\hbox{\vrule depth-.03ex height1.1ex\kern-0.15em$\times$}}}
\newcommand\til{\widetilde}
\newcommand\tr{\operatorname{tr}}
\newcommand{\diam}{\operatorname{diam}}
\numberwithin{equation}{section}
\newcommand{\fsubd}{\mathrel{{\scriptstyle\searrow}\kern-1ex^d\kern0.5ex}}
\newcommand{\bsubd}{\mathrel{{\scriptstyle\swarrow}\kern-1.6ex^d\kern0.8ex}}
\newcommand{\fsubeq}{\mathrel{\raise-.7ex\hbox{$\overset{\searrow}{=}$}}}
\newcommand{\bsubeq}{\mathrel{\raise-.7ex\hbox{$\overset{\swarrow}{=}$}}}
\newcommand{\base}{\operatorname{base}}
\newcommand{\EL}{\mathcal{EL}}
\newcommand{\tsh}[1]{\left\{\kern-.9ex\left\{#1\right\}\kern-.9ex\right\}}
\newcommand\geod{\operatorname{\mathbf g}}
\newcommand\twist{\operatorname{tw}}
\newcommand\Teich{{\mathcal T}}
\newcommand\interior{{\rm int}}
\newcommand{\cx}{{\mathbb C}}
\renewcommand{\half}{{\mathbb H}}
\newcommand{\reals}{{\mathbb R}}
\renewcommand{\bold}[1]{\medskip \noindent {\bf \boldmath #1
                        }\nopagebreak[4]}
\newcommand{\bdry}{\partial}
\newcommand{\closure}{\overline}
\newcommand{\compos}{\circ}
\newcommand{\includesin}{\hookrightarrow}
\renewcommand{\Mod}{\mbox{\rm Mod}}
\renewcommand{\PSL}{\mbox{\rm PSL}}
\newcommand{\CAT}{\mbox{\rm CAT}}
\newtheorem*{namedtheorem}{\theoremname}
\newcommand{\theoremname}{testing}
\newenvironment{named}[1]{\renewcommand{\theoremname}{#1}\begin{namedtheorem}}{\end{namedtheorem}}
\newcommand{\calA}{{\mathcal A}}
\newcommand{\calC}{{\mathcal C}}
\newcommand{\calE}{{\mathcal E}}
\newcommand{\calF}{{\mathcal F}}
\newcommand{\calK}{{\mathcal K}}
\newcommand{\calL}{{\mathcal L}}
\newcommand{\calM}{{\mathcal M}}
\newcommand{\calO}{{\mathcal O}}
\newcommand{\calP}{{\mathcal P}}
\newcommand{\calS}{{\mathcal S}}
\newcommand{\calT}{{\mathcal T}}
\newcommand{\ml}{{\calM \calL}}
\newcommand{\pl}{{\calP \calM \calL}}
\newcommand{\el}{{\calE \calL}}
\begin{document}
\title{\bf Asymptotics of Weil-Petersson geodesics II: 
bounded geometry and unbounded entropy}
\author{\sc Jeffrey Brock, Howard Masur, and Yair Minsky\thanks{Each
    author partially 
supported by the NSF.  J. Brock was partially supported by a John Simon
Guggenheim Foundation Fellowship.}}

\maketitle

\begin{abstract}
  We use ending laminations for Weil-Petersson geodesics to establish
  that bounded geometry is equivalent to bounded combinatorics for
  Weil-Petersson geodesic segments, rays, and lines.  Further, a more
  general notion of {\em non-annular bounded combinatorics}, which allows
  arbitrarily large Dehn-twisting, corresponds to an equivalent
  condition for Weil-Petersson geodesics.  As an application, we show
  the Weil-Petersson geodesic flow has compact invariant subsets with
  arbitrarily large topological entropy.
\end{abstract}

\renewcommand\marginpar[1]{} 

\setcounter{tocdepth}{1}
\tableofcontents


\newcommand{\dwp}{d_{{\mathrm W\kern-1pt P}}}
\newcommand{\WP}{{\mathrm W\kern-1pt P}}

\newcommand{\dwpbar}{d_{\overline{\mathrm W\kern-1pt P}}}

\newcommand{\Tsum}{\sum\nolimits'}

\newcommand{\ray}{\mathbf{r}}

\renewcommand{\geod}{\mathbf{g}}

\newcommand{\tgeod}{\mathbf{h}}

\newcommand{\elam}{\lambda}

\renewcommand{\Teich}{\mbox{\rm Teich}}

\newcommand{\compl}{\closure{\Teich(S)}}

\newcommand{\BMMI}{Brock:Masur:Minsky:asymptoticsI}

\newcommand{\sys}{\operatorname{sys}}
\newcommand{\G}{\mathbf{G}}

\section{Introduction}
\label{intro}

This paper is the second in a series analyzing the large-scale
behavior of geodesics in the Weil-Petersson metric on Teichm\"uller
space.  In the first paper,
\cite{\BMMI}, we defined a notion of an {\em
  ending lamination} for a Weil-Petersson geodesic ray, and gave a
parametrization of the geodesic rays
based at a fixed point $X \in \Teich(S)$ that recur to the thick part
of Teichm\"uller space in terms of their ending laminations as points
in the Gromov-boundary of the curve complex.  Our main
goal in the present discussion is to describe a connection between
ending laminations of {\em bounded type}, and the control they give over
the trajectories of the rays to which they are associated.

Some of this is in direct analogy with {\em Teichm\"uller} geodesic
rays. For these rays the notion of an ending lamination is explicit in
the definition, and many of the questions we ask already have
well-understood answers.  The lack of a good description of the
behavior of the hyperbolic structure of surfaces that lie along a
Weil-Petersson geodesic has kept a full understanding of the large
scale behavior of geodesics out of reach.






The main result of this paper is the equivalence of {\em
  bounded geometry} for a Weil-Petersson geodesic, which is just
precompactness of its projection to the moduli space, and {\em bounded
  combinatorics} of its ending laminations, a notion analogous to
bounded continued fraction expansions for slopes of torus foliations.

Specifically, given a bi-infinite Weil-Petersson geodesic, we consider
ending laminations $\elam^+$ and $\elam^-$ associated to its forward and backward trajectories.  To each essential subsurface $Y \subsetneq
S$ that is not a three-holed sphere, there is an associated
coefficient
$$d_Y(\elam^-,\elam^+)$$ that gives a notion of distance between the
projections of the ending laminations $\elam^+$ and $\elam^-$ in the
curve complex $\CC(Y)$.  We say the pair $(\elam^+,\elam^-)$ has
$K$-{\em bounded combinatorics} if there is an upper bound $K >0$ to all such
coefficients.
 \begin{theorem}{theorem:BCGT}{\sc (Bounded Combinatorics Geometrically Thick)}
   For each 
   $K>0$ there is an $\epsilon> 0$ so that if the ending laminations
   of a bi-infinite Weil-Petersson geodesic $\geod$ 
   have $K$-bounded combinatorics then $\geod(t)$ lies in the
   $\epsilon$-thick part for each $t$.
 \end{theorem}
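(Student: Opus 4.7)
The plan is to proceed by contradiction. Fix $K$ and suppose there exist bi-infinite WP geodesics $\geod_n \colon \mathbb{R} \to \Teich(S)$ whose ending lamination pairs $(\elam_n^-,\elam_n^+)$ satisfy $d_Y(\elam_n^-,\elam_n^+) \le K$ for every essential non-pants subsurface $Y$, yet with points $\geod_n(t_n)$ such that $\inj(\geod_n(t_n)) \to 0$. Translating time so that $t_n = 0$, applying mapping class group elements, and passing to a subsequence, I may assume there is a fixed simple closed curve (or multicurve) $\alpha$ on $S$ with $\ell_{\geod_n(0)}(\alpha) \to 0$, while the restriction of $\geod_n(0)$ to $S \setminus \alpha$ remains in a fixed compact subset of the corresponding lower-complexity moduli space.

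Next I would extract a geometric limit $\geod_\infty$ taking values in the WP completion $\compl$. Using Wolpert's convexity and the known properness of WP geodesics in the completion, together with the compactness of the normalized $S \setminus \alpha$ factor, the limit exists as a bi-infinite WP geodesic in $\compl$ passing at time $0$ through the stratum of nodal surfaces where $\alpha$ is pinched. To $\geod_\infty$ I then associate forward and backward ending laminations $\elam_\infty^\pm$; by the continuity of the ending-lamination assignment established in Paper I \cite{\BMMI}, these arise as $\PML(S)$-limits of the $\elam_n^\pm$ along a subsequence.

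The core of the proof is to show that the pinching of $\alpha$ along $\geod_n$ forces the annular projection $d_\alpha(\elam_n^-,\elam_n^+)$ to grow without bound, directly contradicting the $K$-bounded combinatorics hypothesis. This is the Weil-Petersson analog of the classical Teichm\"uller and Kleinian statement that a short curve along a geodesic is accompanied by a large relative twist parameter between the structures on either side. Concretely, in the thin region near the $\alpha$-stratum the WP metric admits a product-region structure, refining those familiar from the Teichm\"uller setting, in which the twist coordinate about $\alpha$ evolves essentially monotonically with total variation tending to infinity as $\ell(\alpha)$ at the closest point to the stratum tends to $0$. Translated through the hierarchy-type machinery used in \cite{\BMMI} to construct WP ending laminations, this twisting appears as the annular subsurface projection $d_\alpha(\elam_n^-,\elam_n^+)$, which must therefore diverge.

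The principal obstacle lies in making the last step quantitative: bridging the analytic side (Wolpert's asymptotic formulas for the WP metric and its product-region decomposition near a stratum) with the combinatorial side (subsurface projections and the hierarchy-type description of WP ending laminations from Paper I). Once this correspondence is made effective, the compactness argument above closes the contradiction and delivers a thickness bound $\epsilon = \epsilon(K)$ as asserted. A secondary subtlety is that the limiting geodesic $\geod_\infty$ may cross several strata for an extended interval of time, so one must either work with a broken geodesic or be content to apply the above reasoning at each crossing separately; in either case the combinatorial divergence produced by any single crossing suffices for the contradiction.
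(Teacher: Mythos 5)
Your high-level strategy — contradiction, renormalize with the mapping class group, and argue that a pinching curve forces unbounded annular projection — matches the spirit of the paper's argument, but you have identified the crux and then left it unresolved, and in doing so you have also skipped an essential intermediate reduction.

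The paper does not attempt a direct geometric limit of bi-infinite geodesics. Instead it first proves a stability theorem (Theorem~\ref{i bounded}): under \emph{non-annular} $K$-bounded combinatorics, any quasi-geodesic in $P(S)$ between the ending data fellow-travels a hierarchy path. This uses a coarse contraction property (Theorem~\ref{bg projection}) and is indispensable: it is what guarantees that the combinatorial data seen \emph{along} the geodesic — not just at its ends — is controlled, and in particular that a long sojourn in a thin region would produce an unbounded subsurface coefficient $d_{X_n}(Q_n^-,Q_n^+)$ with $X_n$ non-annular, contradicting the non-annular bound (this is ``Case~1'' of the paper). Your argument has no mechanism for propagating the bound at the ends into control at the interior of the geodesic; the compactness you invoke is not by itself enough.

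The remaining case (``Case~2'') is a \emph{bounded-length} approach to a stratum. Here the bridge between analysis and combinatorics that you correctly flag as ``the principal obstacle'' is precisely what the paper supplies, and it is not the product-region picture you sketch. The tool is Wolpert's geodesic limit theorem (Theorem~\ref{geodesic limits}), which says that a sequence of length-$L$ WP geodesic segments has a subsequence converging, after composing with mapping classes $\varphi_{i,n}$ built from \emph{unbounded multi-twists} $\calT_{i,n}$, to a broken geodesic $\hat\geod$ meeting strata. Lemma~\ref{twist big2} then translates the unboundedness of the $\calT_{i,n}$ directly into $d_{\gamma_n}(\nu(\geod_n(0)),\nu(\geod_n(T_n))) \to \infty$ by a careful bookkeeping of markings $\mu_i^\pm$ associated to each stratum crossing. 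This is the effective version of ``pinching forces twisting'' that your proposal asserts but does not prove; without it the proof does not close. Finally, the passage from segments to bi-infinite geodesics in the paper is not a naive geometric limit: it goes through the recurrence and visibility results of Paper~I (Theorem~\ref{relc} and the visibility property), precisely because Wolpert's theorem is stated for finite segments. So your proposal identifies a workable skeleton, but the two load-bearing ingredients — the stability theorem for the non-annular part, and Wolpert's geodesic limit theorem for the annular part — are both missing, and they are not interchangeable with the general ideas you gesture toward.
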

We make precise the notion of bounded combinatorics in
section~\ref{preliminaries} and remark that similar statements hold for geodesic
segments and geodesic rays (see Theorem~\ref{bc implies bg}).

Conversely, constraining the geometry of surfaces along a
Weil-Petersson geodesic forces a bound to the combinatorics of the
ending laminations.
\begin{theorem}{theorem:TGCB}{\sc (Thick Geodesics Combinatorially Bounded)} 
 Given $\epsilon>0$ there
 is a $K>0$ so that if $\geod$ is any bi-infinite geodesic in the
 $\epsilon$ thick part of $\Teich(S)$, then the combinatorics of the
 ending laminations associated to its ends are $K$-bounded.
\end{theorem}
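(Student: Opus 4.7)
The plan is to argue by contradiction through a limiting procedure, using the characterization of ending laminations for thick Weil--Petersson rays from \cite{\BMMI} together with Klarreich's identification $\partial_\infty \CC(S)\cong \EL(S)$. Fix $\epsilon>0$, and suppose no such $K$ exists. Then there is a sequence of bi-infinite geodesics $\geod_n\colon \R\to \Teich(S)_{\ge\epsilon}$ with ending laminations $\elam_n^\pm$ and proper essential subsurfaces $Y_n \subsetneq S$ such that
\[
d_{Y_n}(\elam_n^-,\elam_n^+)\to \infty.
\]

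First I would normalize using the mapping class group, which acts isometrically on $\Teich(S)$ in the Weil--Petersson metric and equivariantly on subsurfaces and laminations. Since there are only finitely many topological orbits of proper essential subsurfaces, a subsequence allows us to assume $Y_n=Y$ is a fixed subsurface. Using stabilizer elements (for example Dehn twists about curves in $Y$ and about $\partial Y$) I would further normalize so that $\pi_Y(\elam_n^-)=\alpha_0$ is a fixed vertex of $\CC(Y)$, so that $\pi_Y(\elam_n^+)$ must diverge in $\CC(Y)$. Finally, by Mumford compactness of the $\epsilon$-thick part of moduli space, I would use one more translation by $\Mod(S)$ (together with a time shift of the parametrization) to put a distinguished basepoint on each $\geod_n$ inside a fixed compact region of $\Teich(S)_{\ge\epsilon}$.

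Next, I would extract a limit. By an Arzela--Ascoli argument applied to the uniformly Lipschitz maps $\geod_n$, combined with completeness of the $\epsilon$-thick part and the CAT(0) geometry of $\closure{\Teich(S)}$, a subsequence converges uniformly on compact time intervals to a bi-infinite WP geodesic $\geod_\infty\subset \Teich(S)_{\ge\epsilon}$. By continuity of the ending lamination parametrization from \cite{\BMMI}, the forward and backward ending laminations satisfy $\elam_n^\pm\to\elam_\infty^\pm$ in $\EL(S)$. But then $\pi_Y(\elam_n^+)\to\infty$ in $\CC(Y)$ together with $\elam_n^+\to\elam_\infty^+\in\EL(S)$ forces $\partial Y$ to occur as a leaf of $\elam_\infty^+$ (the usual semicontinuity of subsurface projection, which is continuous on the complement of the laminations carrying $\partial Y$). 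Since $\elam_\infty^+$ lies in $\EL(S)$, it is minimal and filling, hence cannot carry a simple closed curve as a leaf, and we reach the desired contradiction.

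The step I expect to be most delicate is verifying the required ending-lamination continuity in the bi-infinite setting: \cite{\BMMI} describes rays based at a fixed point, and one needs the analogous continuity for both the forward and the backward ends of $\geod_n$ after the compound normalization. A second subtlety is choosing the distinguished basepoint on each $\geod_n$ so that normalization by $\Mod(S)$ genuinely pins the geodesic down in a compact region; otherwise the "interesting subsegment" in the $Y$-projection could drift off to infinity in time, preventing the subsurface divergence from being detected in the limit. Both of these points are essentially technical but will require careful hierarchy-type bookkeeping built on top of \cite{\BMMI}.
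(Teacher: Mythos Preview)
Your overall strategy---contradiction, normalize by $\Mod(S)$, extract a limit via cocompactness of the thick part, and derive a contradiction from the structure of the limiting ending laminations---is indeed the paper's approach (which follows Mosher's compactness argument). But the worry you flag at the end is not a technicality; it is a genuine gap, and your normalization scheme does not work as written.

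The problem is that your three normalizations are incompatible. After you act by $\Mod(S)$ to pin $Y_n$ to a fixed isotopy class $Y$, and then by $\operatorname{Stab}(Y)$ to pin $\pi_Y(\elam_n^-)$, the remaining freedom is a subgroup of $\operatorname{Stab}(Y)$ which does \emph{not} act cocompactly on $\Teich(S)_{\ge\epsilon}$. So your final step---``one more translation by $\Mod(S)$'' to land a basepoint in a fixed compact region---will in general move $Y$ again, undoing the earlier normalization. There is no way to simultaneously fix the subsurface $Y$ as an isotopy class and trap the geodesic in a $\Mod(S)$-fundamental domain; one of the two must be allowed to vary.

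The paper's resolution is to \emph{not} fix the subsurface. Instead, one first uses convexity of length functions to choose the basepoint: reparametrize so that $\ell_{\partial W_n}$ attains its minimum at $\geod_n(0)$. Only then act by $\Mod(S)$ to put $\geod_n(0)$ into a compact fundamental domain; this is a single cocompactness step, with no constraint on $W_n$. Now pass to a subsequence so that $(\geod_n,\mu_n^+,\mu_n^-)$ converges (here the paper works with measured laminations $\mu_n^\pm\in\ML$ supported on the ending laminations, normalized to length $1$ at $\geod_n(0)$, rather than with points of $\EL(S)$ directly), and so that $[\partial W_n]\to[\sigma]$ in $\PML(S)$. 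The choice of basepoint is exactly what makes the rest work: by continuity, $\ell_{\hat\geod,\sigma}$ still has its minimum at $t=0$, so $\sigma$ is not supported on either ending lamination (via Lemma~4.5 of \cite{\BMMI}). Since the limiting $\mu^\pm$ are filling and minimal, they meet $\sigma$ transversely, which cuts the leaves of $\mu_n^\pm$ into arcs of bounded length inside $W_n$ and bounds $d_{W_n}(\mu_n^+,\mu_n^-)$---the contradiction. The annular case needs a separate but analogous argument.

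In short: your instinct that ``the interesting subsegment could drift off to infinity in time'' is exactly right, and the fix is to let $\partial W$ pick the basepoint for you, then let $W_n$ vary and take a projective limit of $\partial W_n$ rather than trying to freeze $W$.
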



As part of the analysis we also have the
following fellow travelling result for Teichm\"uller geodesics.
\begin{theorem}{bg near teich intro} For all $\epsilon>0$ there is a
  $D>0$ so that each bi-infinite $\epsilon$-thick Weil-Petersson
  geodesic $\geod$ lies at Hausdorff-distance $D$, in the
  Teichm\"uller metric,  from a unique
  Teichm\"uller geodesic $\tgeod$. 
\end{theorem}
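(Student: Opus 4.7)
The plan is to reduce the theorem to a comparison of two thick geodesics with the same bounded-combinatorics ending laminations, and then invoke the coarse equivalence of Weil--Petersson and Teichm\"uller geometry on the thick part.

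First I would apply Theorem~\ref{theorem:TGCB} to the $\epsilon$-thick bi-infinite Weil--Petersson geodesic $\geod$. This yields $K = K(\epsilon)$ so that the ending laminations $\lambda^\pm$ associated to the two ends of $\geod$ have $K$-bounded combinatorics. Because $\geod$ is bi-infinite and recurs to the thick part, the machinery of~\cite{\BMMI} gives that $\lambda^\pm$ are distinct filling laminations in $\EL(S)$ and in particular determine projective measured foliation classes that may serve as vertical/horizontal data for a Teichm\"uller geodesic. Let $\tgeod$ be the unique bi-infinite Teichm\"uller geodesic with these vertical and horizontal foliations. Rafi's combinatorial characterization of the thick--thin decomposition of a Teichm\"uller geodesic then converts the $K$-bounded combinatorics of $\lambda^\pm$ into an $\epsilon' = \epsilon'(K)$-thick estimate for $\tgeod$. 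Thus both $\geod$ and $\tgeod$ lie in a common thick part of $\Teich(S)$.

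Next I would compare the two geodesics via their coarse shadows to the curve complex $\CC(S)$ (take a shortest curve at each point). Each shadow is an unparametrized quasi-geodesic in $\CC(S)$: for $\tgeod$ by Masur--Minsky, and for $\geod$ by the ending lamination theory developed in~\cite{\BMMI} and refined in the first paragraph. Both shadows share the same endpoints $\lambda^\pm \in \partial \CC(S)$, so by hyperbolicity of $\CC(S)$ they fellow-travel with uniform constants. The $K$-bounded combinatorics hypothesis simultaneously gives a uniform upper bound on all subsurface projection coefficients between corresponding points. Via Rafi's combinatorial model of Teichm\"uller space together with the Brock quasi-isometry between $(\Teich(S),\dwp)$ and the pants graph, these shared coarse data pin down both geodesics to within uniformly bounded coarse distance, in either metric.

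Finally I would convert coarse closeness to Hausdorff-closeness in the Teichm\"uller metric. On any thick part of $\Teich(S)$, the Teichm\"uller and Weil--Petersson metrics are bi-Lipschitz equivalent on balls of uniformly bounded radius, so a bounded coarse distance is a bounded Teichm\"uller distance. To pass from the pointwise comparison above to a Hausdorff bound $D$, I would use closest-point projection in $\CC(S)$ (well-defined up to bounded error by hyperbolicity) to set up a reparametrization: at each parameter $t$ of $\geod$, the corresponding point on $\tgeod$ is produced combinatorially and lies within uniformly bounded Teichm\"uller distance. Uniqueness of $\tgeod$ follows from uniqueness of the Teichm\"uller geodesic with prescribed vertical/horizontal foliation pair.

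The main obstacle I expect is the reparametrization step in the last paragraph: reconciling two distinct arc-length parametrizations (WP and Teichm\"uller) with a single combinatorial correspondence, while keeping the bounds uniform and independent of the segment of $\geod$ considered. Overcoming this requires genuine compatibility between the coarse models for the two metrics on the thick part, and establishing this compatibility is the real content of the theorem.
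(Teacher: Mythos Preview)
Your approach is plausible but takes a genuinely different route from the paper's. The paper does not invoke Theorem~\ref{theorem:TGCB}, Rafi's model, or curve-complex shadows at all; instead it reuses the compactness setup from the proof of Theorem~\ref{bg implies bc}. There one has the $\Mod(S)$-cocompact space $\Gamma$ of triples $(\geod,\mu^+,\mu^-)$ with $\geod$ a cobounded Weil--Petersson geodesic containing parameter $0$ and $\mu^\pm$ ending measures normalized to length $1$ at $\geod(0)$. The Gardiner--Masur theorem continuously associates to each triple the Teichm\"uller geodesic $\tgeod(t)=X(e^t\mu^+,e^{-t}\mu^-)$, and cocompactness of $\Gamma$ immediately gives a uniform bound $d_T(\geod(0),\tgeod(0))\le M$. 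Shifting the origin of $\geod$ to an arbitrary parameter $t$ and renormalizing the measures keeps the triple in $\Gamma$ while producing the same unparametrized $\tgeod$, so $d_T(\geod(t),\tgeod)\le M$ for all $t$; the reverse Hausdorff bound then follows because consecutive integer points on the cobounded $\geod$ are at bounded Teichm\"uller distance. This sidesteps entirely the reparametrization difficulty you flag. Your combinatorial route, by contrast, would need two ingredients you do not supply: that $\lambda^\pm$ are uniquely ergodic (so that they determine a canonical measured-foliation pair and hence a \emph{unique} Teichm\"uller geodesic), and a uniform annular-projection bound between corresponding points on $\geod$ and $\tgeod$ (non-annular bounds alone control pants-graph, not Teichm\"uller, distance). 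Both can be extracted from the available machinery, so your outline is salvageable, but the paper's compactness argument is substantially shorter and avoids the external inputs.
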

See Theorem~\ref{bg near teich} for a more precise formulation.

\bold{Remark:}
After the announcement of the main results of this paper in the fall
of 2007, Hamenst\"adt gave an elegant alternative proof of
Theorems~\ref{theorem:BCGT} and~\ref{theorem:TGCB}
\cite{Hamenstaedt:compact:wp} via Teichm\"uller geodesics.







\bold{The case of non-annular bounds.}  
 As an intermediate  step in the proof of Theorem~\ref{theorem:BCGT} we start with the weaker  assumption  of {\em non-annular bounded
   combinatorics}, a criterion considered,
 for example,  by Mahan Mj
(see \cite{Mitra:ibounded}) in the context of Kleinian groups, where the
 coefficients $d_Y(\elam^+,\elam^-)$ are bounded only for essential
 subsurfaces $Y \subsetneq S$ that are 
 not annuli.  This assumption allows for the possibility of
 arbitrarily large {\em relative twisting} of the ending laminations
 $\elam^+$ and $\elam^-$ around various closed curves.

 With this weaker assumption, we obtain a stability theorem for
 quasi-geodesics in the {\em pants graph}, a combinatorial model for
 the Weil-Petersson metric (\cite{Brock:wp}).  The pants graph $P(S)$,
 introduced by Hatcher and Thurston, encodes isotopy classes of pants
 decompositions of the surface $S$ as its vertices, with edges joining
 vertices whose corresponding pants decompositions are related by
 certain elementary moves.  By Theorem~1.1 of \cite{Brock:wp}, there
 is a quasi-isometry
$$Q\colon \Teich(S) \to P(S)$$that associates to each $X \in
\Teich(S)$ a shortest {\em Bers pants decomposition} for $X$.

\state{Theorem \ref{i bounded}.}{
{\sc (Stability without Annuli)}
Let $F \colon [0,T] \to P(S)$ be a $K$-quasi-geodesic, and let $F(0) =
Q_-$ and $F(T) = Q_+ $ denote its endpoints.  If $Q_-$ and $Q_+$
satisfy the non-annular bounded combinatorics condition, then $F(t)$
fellow travels a hierarchy path in $P(S)$.
}

Hierarchy paths will be discussed  in Section~\ref{preliminaries}, and
the precise statement of Theorem \ref{i bounded} appears in \S4. 
Stability of quasi-geodesics, standard in the setting of
$\delta$-hyperbolic metric spaces, only holds generally in $P(S)$ for
low-complexity cases when the dimension $\dim_\cx(\Teich(S))$ is $1$
or $2$, (see \cite{Brock:Farb:rank}).  By the main result of
\cite{Brock:Masur:pants}, a relative version holds for
$\dim_\cx(\Teich(S)) =3$.

Stability is also natural question in the context of Weil-Petersson
geometry, as a Weil-Petersson geodesic determines a quasi-geodesic in
the pants graph via the quasi-isometry.  The corresponding condition
for Weil-Petersson geodesics is more difficult to formulate, but it
allows the possibility for geodesics to approach boundary strata over
very small intervals of time by twisting.  We will not need a result
of this type, but the phenomenon of large twisting along short
intervals approaching boundary strata, suggests why a bound on the
amount of twisting, a condition not part of non-annular bounded
combinatorics, but part of the assumption of $K$-bounded
combinatorics, will be necessary to prove Theorem~\ref{theorem:BCGT}.

\smallskip
\noindent {\bf Topological entropy.}  In \cite{\BMMI}
we showed how ending laminations can be employed to understand
fundamental features of the topological dynamics of the Weil-Petersson
geodesic flow on the unit tangent bundle to moduli space (see
\cite[Thms. 1.8 and 1.9]{\BMMI}).  In this
paper, we show how the finer combinatorial features of the ending
laminations described above provide for further understanding of the
flow.  In particular we show
\newcommand{\entropy}{
{\sc (Topological Entropy)}
  There are compact flow-invariant subsets of $\calM^1(S)$ of
  arbitrarily large topological entropy.
}
\begin{theorem}{theorem:entropy}
\entropy
\end{theorem}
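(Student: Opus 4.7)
The plan is to produce, for each prescribed $h>0$, a compact WP-flow invariant subset of $\calM^1(S)$ whose topological entropy exceeds $h$. For each $K>0$, let $\mathcal{G}_K$ denote the set of unit WP-tangent vectors in $\Teich(S)$ tangent to bi-infinite WP geodesics whose ending lamination pair $(\elam^-,\elam^+)$ has $K$-bounded combinatorics. By Theorem~\ref{theorem:BCGT} there is $\epsilon(K)>0$ so that every such geodesic lies in the $\epsilon(K)$-thick part of $\Teich(S)$, and hence the image $\mathcal{K}_K\subset\calM^1(S)$ is contained in a compact set. Flow-invariance is built in; closedness of $\mathcal{K}_K$ in $\calM^1(S)$ follows from $\Mod(S)$-equivariance of subsurface projections together with continuity of ending laminations along convergent sequences of geodesics, developed in \cite{\BMMI}.

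Next, every primitive closed WP geodesic in $\mathcal{K}_K$ is the projection of the axis of a pseudo-Anosov $\phi\in\Mod(S)$, and by $\phi$-equivariance its ending laminations have $K$-bounded combinatorics as soon as the relevant coefficients are bounded by $K$ on a single fundamental domain. Write $N_K(L)$ for the number of such primitive closed orbits of WP-length at most $L$. On the bounded-geometry set $\mathcal{K}_K$ the WP sectional curvatures are negative with controlled bounds, closed orbits are hyperbolic and the flow is expansive, so a standard horseshoe construction yields
$$h_{top}(\mathcal{K}_K) \;\ge\; \limsup_{L\to\infty}\frac{1}{L}\log N_K(L).$$

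The main step is to arrange the right-hand side to grow without bound in $K$. I would build a Schottky-style family of generators $\phi_1,\ldots,\phi_{N(K)}\in\Mod(S)$, each a pseudo-Anosov with axis in $\mathcal{G}_K$ and uniformly bounded WP translation length $L$, whose invariant laminations are sufficiently spread in the curve complex that every nontrivial positive word in the $\phi_i$ is again a pseudo-Anosov with axis in $\mathcal{G}_K$. Behrstock's inequality together with the Masur-Minsky hierarchy machinery controls the subsurface projections of concatenated words in terms of those of the factors, while Theorem~\ref{bg near teich intro} transfers the Teichm\"uller geometric picture to the WP setting so that lengths can be monitored. As $K$ grows one can include more and more admissible generators, $N(K)\to\infty$, and each word of length $n$ realizes a distinct pseudo-Anosov conjugacy class of WP translation length at most $nL$; hence $N_K(nL)\gtrsim N(K)^n$ and $h_{top}(\mathcal{K}_K)\ge (\log N(K))/L\to\infty$.

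The main obstacle is this Schottky construction, and within it the control of \emph{annular} coefficients. Non-annular coefficients are robust under the ping-pong dynamics once the invariant laminations are pairwise separated in the curve complex. Annular coefficients are more delicate: if the invariant laminations of two generators project near a common curve $\gamma$, Dehn twists around $\gamma$ can accumulate under concatenation and destroy the $K$-bound on $d_\gamma$. Selecting generators so that every pair of invariant laminations stays away from every shared annular projection, while simultaneously keeping WP translation lengths uniformly bounded as the generating set grows, is the technical crux.
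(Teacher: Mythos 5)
Your overall framing matches the paper's: take the set of bi-infinite geodesics with $K$-bounded combinatorics, use the bounded-combinatorics-implies-bounded-geometry theorem to get a compact flow-invariant set, show the restricted flow is expansive, and lower-bound topological entropy by the exponential growth rate of closed orbits. But the heart of the theorem is the step you explicitly leave open: producing, for each $N$, a compact invariant set containing exponentially many distinct closed orbits of linearly bounded Weil--Petersson length, all with a \emph{uniform} bound $K$ on every subsurface coefficient (annular ones included). Your Schottky/ping-pong plan with a growing generating set is exactly where you admit the argument breaks down --- you name the control of annular coefficients of concatenated words as an unresolved ``technical crux'' --- so as written the proof has a genuine gap precisely at its main point. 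A secondary, smaller issue: the inequality $h_{\rm top}(\calK)\ge p_\varphi(\calK)$ does not need a horseshoe or any curvature hypotheses; expansiveness of the restricted flow plus the standard orbit-growth/entropy inequality suffices, and your appeal to ``negative sectional curvature with controlled bounds'' on the thick part is itself an unproved assertion you do not need.

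The paper closes the gap by a quite different device that avoids ping-pong and annular bookkeeping altogether. Fix a single binding pair $(\alpha,\beta)$ with $k=i(\alpha,\beta)\ge 2$; Thurston's construction gives a Teichm\"uller disk $\Delta_{(\alpha,\beta)}$ and a faithful representation $\rho$ of the free group $\langle\tau_\alpha,\tau_\beta\rangle$ into $\PSL_2(\reals)$. The words $\psi_{(q_1,\dots,q_n)}=\tau_\alpha^{q_1}\tau_\beta^{-q_1}\cdots\tau_\alpha^{q_n}\tau_\beta^{-q_n}$ with $q_j\in[1,B]$ are pseudo-Anosov, and freeness gives $B^n/n$ distinct conjugacy classes. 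Uniform $K(B)$-bounded combinatorics of their invariant laminations is obtained not by estimating projections of products, but by observing that the fixed points of $\rho(\psi)$ have continued-fraction entries bounded by $kB$, so the axis stays in a compact part of the quotient of $\Delta_{(\alpha,\beta)}$; since the disk is isometrically embedded for the Teichm\"uller metric, Rafi's theorem then converts cocompactness of the Teichm\"uller axis into a bound $K(B)$ on \emph{all} subsurface coefficients, annuli included. Finally, the Weil--Petersson translation length is at most $nC$ \emph{independently of the exponents $q_j$}, because twisting about a curve already in the Bers pants decomposition is invisible in the pants graph; this gives growth rate at least $(n\log B-\log n)/nC\to \log B/C$, unbounded in $B$. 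If you want to salvage your approach you would need to supply the analogue of this continued-fraction/Rafi step, which is exactly the annular control you identify as missing.
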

Here, $\calM^1(S) = T^1 \Teich(S) / \Mod(S)$ represents the quotient
of the unit tangent bundle to $\Teich(S)$ by the action of the mapping
class group $\Mod(S)$.  We note that due to the fact that the
Weil-Petersson geodesic flow is well defined for all time only on the
lifts of bi-infinite geodesics in $\calM(S)$ to $\calM^1(S)$, topological entropy is
not well defined on the whole of $\MM^1(S)$.  Nevertheless, the
topological entropy for compact invariant subsets of the
Weil-Petersson geodesic flow sits in strong contrast to the
topological entropy for compact invariant subsets of the Teichm\"uller
geodesic flow, which approach a positive supremum equal to the real
dimension of the Teichm\"uller space in question (see
\cite{Hamenstaedt:compact}).  Theorem \ref{theorem:entropy} follows
from the following unboundedness result for the growth rate
$$p_\varphi(\calK) = \liminf_{L \to \infty} \frac{\log n_\calK(L) }{L}$$
of the number $n_\calK(L)$ of closed orbits for the geodesic flow  of
length at most $L$ in the invariant subset $\calK$.

\newcommand{\growthrate}{ Given
  any $N>0$, there is a compact Weil-Petersson geodesic flow-invariant
  subset $\calK \subset \calM^1(S)$ for which the asymptotic growth
  rate $p_\varphi(\calK)$ for the number of closed orbits in $\calK$
  satisfies
$$
p_\varphi(\calK) 
 \ge N.
$$
}
\begin{theorem}{growth rate}{\sc (Counting Orbits)}
\growthrate
\end{theorem}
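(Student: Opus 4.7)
The plan is to exhibit, for any target $N$, a collection of closed Weil-Petersson geodesics whose projections to $\calM^1(S)$ all lie in a common compact set and whose exponential counting function exceeds $N$. Since closed WP geodesics correspond bijectively to conjugacy classes of pseudo-Anosov elements of $\Mod(S)$, with length equal to WP translation length, the problem reduces to constructing enough pseudo-Anosovs with uniformly bounded combinatorics.

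The heart of the argument is a Schottky-type construction. Fix a large integer $k$ (to be chosen in terms of $N$) and produce pseudo-Anosovs $\phi_1,\dots,\phi_k \in \Mod(S)$ with the following properties: (i) each $\phi_i$ has Teichm\"uller axis in the thick part, so that by Rafi-type criteria its own pair $\{\elam^-(\phi_i),\elam^+(\phi_i)\}$ already has bounded combinatorics; (ii) after replacing each $\phi_i$ by a sufficiently high power and choosing small enough neighborhoods of the attracting/repelling fixed points on $\PMF$, the maps $\phi_i^{\pm 1}$ satisfy a classical ping-pong condition, so that every reduced word $W=\phi_{i_1}^{e_1}\cdots\phi_{i_n}^{e_n}$ in the free group $\langle\phi_1,\dots,\phi_k\rangle$ is again pseudo-Anosov with $\elam^\pm(W)$ lying in the corresponding attracting neighborhood; (iii) by shrinking those neighborhoods further, and using that each subsurface projection $d_Y(\cdot,\cdot)$ is coarsely continuous on pairs of filling laminations in $\PMF$, the coefficients $d_Y(\elam^-(W),\elam^+(W))$ are uniformly bounded by a common constant $K$ over all essential proper subsurfaces $Y\subsetneq S$ (including annuli) and all words $W$.

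Theorem~\ref{theorem:BCGT} then delivers an $\epsilon=\epsilon(K)>0$ such that the axis of every such $W$ lies in the $\epsilon$-thick part of $\Teich(S)$. By Mumford compactness, the closure $\calK$ in $\calM^1(S)$ of the union of the projected axes is compact and flow-invariant. For the counting estimate, the ping-pong set-up together with the quasi-isometry $Q\colon\Teich(S)\to P(S)$ of \cite{Brock:wp} and the fellow-traveller statement of Theorem~\ref{bg near teich intro} yield a constant $L_0$ with WP translation length of $W$ at most $nL_0$. Since the free group $F_k$ has at least $\sim (2k-1)^n/n$ distinct conjugacy classes of cyclically reduced words of length $n$, each giving a distinct closed orbit in $\calK$, the number $n_\calK(L)$ of closed orbits in $\calK$ of length at most $L$ is bounded below by a quantity of order $(2k-1)^{L/L_0}/L$. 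Hence $p_\varphi(\calK)\ge \log(2k-1)/L_0$, and choosing $k$ large enough that $\log(2k-1)/L_0\ge N$ completes the proof.

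The principal obstacle is property (iii) of the Schottky construction: the uniform bound on the annular coefficients. Annular $d_Y$ is more sensitive than non-annular $d_Y$ to small perturbations on $\PMF$, and arbitrary words in the $\phi_i$ could in principle produce unboundedly large relative twisting. One therefore needs either genuine coarse continuity for annular subsurface projections on appropriate neighborhoods of filling laminations, or a more refined Schottky construction keyed to the marking complex rather than to $\PMF$, in order to propagate the annular bound through arbitrary compositions. Once that uniform combinatorial control is secured, the rest is a streamlined orbit-counting argument in a free subgroup of pseudo-Anosovs, together with a direct application of Theorem~\ref{theorem:BCGT}.
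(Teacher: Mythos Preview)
Your proposal takes a different route from the paper's, and the gap you flag in property~(iii) is genuine and not merely a technicality. Subsurface projections are not coarsely continuous on $\PMF$ uniformly over all subsurfaces $Y$: closeness of $\elam^\pm(W)$ to the finitely many $\elam^\pm(\phi_i)$ in $\PMF$ does not bound $d_Y(\elam^-(W),\elam^+(W))$ as $Y$ ranges over all proper subsurfaces, and composing pseudo-Anosovs can create large twisting about curves that neither factor individually detects. A version of your scheme can be salvaged via convex cocompactness in the sense of Farb--Mosher (a Schottky subgroup whose orbit map to $\CC(S)$ is a quasi-isometric embedding has all nontrivial elements pseudo-Anosov with uniformly cobounded Teichm\"uller axes), but that is substantially different machinery from the continuity claim you sketched. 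There is also a circularity you have not addressed: your length bound $L_0$ depends on the powers needed to set up ping-pong, which may grow with $k$, so $\log(2k-1)/L_0\to\infty$ is not automatic.

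The paper avoids all of this with Thurston's construction from a single fixed filling pair $(\alpha,\beta)$. The elements $\psi_{(q_1,\ldots,q_n)}=\tau_\alpha^{q_1}\tau_\beta^{-q_1}\cdots\tau_\alpha^{q_n}\tau_\beta^{-q_n}$ with $q_j\in[1,B]$ act on one Teichm\"uller disk $\Delta_{(\alpha,\beta)}\cong\Hyp^2$, and cocompactness of their Teichm\"uller axes is read off directly from the bounded continued-fraction expansions of the fixed points of $\rho(\psi)\in\PSL_2(\reals)$. Rafi's theorem converts Teichm\"uller cocompactness into a uniform bound $K=K(B)$ on \emph{all} subsurface coefficients, annular included, with no continuity argument needed. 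For the length estimate, since Dehn twists fix any pants decomposition containing their core curve, one alternates between fixed pants decompositions $P_\alpha\ni\alpha$ and $P_\beta\ni\beta$ to get pants-graph (hence WP) displacement of $\psi$ at most $nC$ with $C$ independent of $B$. There are $B^n/n$ conjugacy classes, so the growth rate is at least $\log(B)/C$, and letting $B\to\infty$ gives the result. The paper's one-parameter family ($B$ varying, generators fixed) thus decouples the combinatorial bound from the counting parameter in exactly the way your two-parameter scheme ($k$ and the ping-pong powers) does not.
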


\bold{Plan of the paper.}  The paper makes considerable use of the
technology of {\em hierarchies} of geodesics in the curve complex
$\calC(S)$ and in the curve complexes $\calC(W)$ of subsurfaces $W
\subset S$ developed in \cite{Masur:Minsky:CCII}.  Here, we axiomatize
the idea of a {\em resolution} of such a hierarchy into a notion of
{\em hierarchy path} in Section~\ref{preliminaries}, where we also
introduce other terminology we will need.  Section~\ref{BG implies BC}
establishes 
Theorem~\ref{theorem:TGCB} by a compactness argument.  The same compactness argument also shows   that a Weil-Petersson geodesic in the thick part of
$\Teich(S)$ fellow travels a Teichm\"uller geodesic in the
Teichm\"uller metric.    Section~\ref{BC implies BG} establishes Theorem
~\ref{i bounded},
and uses this to show Theorem~\ref{theorem:BCGT}, after an analysis of
the combinatorial behavior of bounded length geodesics using recent
work of Wolpert (Theorem~\ref{geodesic limits}).  Finally, in
Section~\ref{entropy}, we apply the results of Section~\ref{BC implies
  BG} to establish that there are compact geodesic-flow-invariant
subsets of arbitrarily large topological entropy.

\bold{Acknowledgements.}  The authors thank the Mathematical Sciences
Research Institute for their hospitality while much of the work in
this paper was completed.  The first author thanks Yale University for
its hospitality and the John Simon Guggenheim Foundation for its
generous support.  We thank Ursula Hamenst\"adt and Scott Wolpert for
illuminating conversations.

\section{Preliminaries}
\label{preliminaries}
In this section we review terminology and background, setting
notation we will use.

\bold{Teichm\"uller space and its metrics.}  If $S$ is a compact
surface of negative Euler characteristic, the {\em Teichm\"uller
  space} $\Teich(S)$ denotes the space of finite-area hyperbolic
structures on $\interior(S)$ up to isotopy. By default we consider the
Weil-Petersson metric on $\Teich(S)$, which is defined via an
$L^2$-norm on cotangent spaces given by 
$$\| \varphi \|^2_{\WP} = \int_X \frac{|\varphi|^2}{\rho_X}$$ where $\varphi
\in T_X^*\Teich(S)$ is a holomorphic quadratic differential on $X$ and
$\rho_X$ denotes the Poincar\'e metric on $X$.  The induced Riemannian
metric $g_{\WP}$ has been much studied by many authors, and we
focus here on properties of its synthetic and geometry and distance
function $d_{\WP}(.,.)$.

We will occasionally refer to the Teichm\"uller metric, a Finsler
metric arising from an $L^1$ norm on cotangent spaces.  Its distance
function $d_T$ measures the infimum over all quasiconformal maps in
the natural isotopy class of the quasi-conformal dilatation.

\bold{Pants decompositions and markings.}  A {\em pants decomposition}
$P$ in a surface $S$ of finite type is a maximal collection of isotopy
classes of disjoint, homotopically distinct, homotopically nontrivial,
non-peripheral simple closed curves.  A {\em marking} $\mu$ consists
of a pants decomposition $P$ which is the {\em base} of the marking,
written $\text{base}(\mu)$, together with a collection of isotopy
classes of {\em transversals} one for each curve in
$\text{base}(\mu)$.  For each $\alpha\in P$ the transversal curve
$\alpha'$ intersects no curve in $P\setminus\alpha$, and intersects
$\alpha$ a minimal number of times subject to this condition
(i.e. once or twice depending on the topological type of
$S\setminus(P\setminus\alpha)$).  We will frequently blur the
distinction between curves and their isotopy classes, as there is a
unique geodesic representative in each isotopy class.

\bold{The Bers constant.}  Given $S$ of negative Euler characteristic,
we denote by $L_S >0$ the constant so that for each $X \in \Teich(S)$,
there is a {\em Bers pants decomposition} $P_X$ of $X$ determined by
closed geodesics on $X$ whose lengths are bounded by $L_S$.  The
isotopy classes of closed geodesics determining $P_X$ are called {\em
  Bers curves} for $X$ (see \cite{Buser:book:spectra}).  For each
pants curve choose a minimal length transversal.  The resulting
marking is called a {\em Bers marking} and is denoted by $\nu_X$.  By
the collar lemma \cite{Buser:book:spectra}, there are a bounded number
of Bers pants decompositions and Bers markings on a given $X$.
We call a curve that arises in a Bers pants decomposition $P_X$ for
$X$ a {\em Bers curve} for $X$.

\bold{The complex of curves.}  The complex of curves $\calC(S)$ serves
to organize the isotopy classes of essential non-peripheral simple
closed curves on $S$.  Each is associated to a vertex of $\calC(S)$,
and $k$-simplices are associated to families of $k+1$ distinct isotopy
classes that can be realized pairwise disjointly on $S$ (there is an
exception for 1-holed tori and 4-holed spheres, where 1-simplices
correspond to pairs of vertices realized with intersection number 1
and 2, respectively). We make $\calC(S)$ into a metric space by 
making each simplex Euclidean with sidelength 1, and letting $d_\calC$
be the induced path metric. 
By the main result of
\cite{Masur:Minsky:CCI}, $(\calC(S),d_\calC)$ is
$\delta$-hyperbolic, in the 
sense of Gromov.  

A $\delta$-hyperbolic space 
 carries a natural {\em Gromov boundary}. In our setting of a
 path-metric space, points in this boundary are 
asymptote classes of  quasi-geodesic rays, where two rays are
asymptotic if their Hausdorff distance is finite. 
Klarreich showed \cite{Klarreich:boundary} (see also
\cite{Hamenstaedt:boundary}) that the Gromov boundary of $\CC(S)$ is
identified with the space $\el(S)$ of {\em ending laminations} on $S$.
We define $\el(S)$ by starting with Thurston's measured lamination
space $\ml(S)$, considering the subset of those laminations that {\em
  fill the surface} (namely, laminations $\mu\in \ml(S)$ so that every
essential simple closed curve $\gamma$ satisfies $i(\mu,\gamma) >0$)
and forgetting the transverse measure on these laminations. The
resulting quotient space is $\el(S)$, with the quotient, or
``measure-forgetting,'' topology.  Convergence from within $\CC(S)$ to
$\EL(S)$ is also defined using this topology, considering $\CC(S)$ as
a subset of $\ml(S)$ modulo measures.

The mapping class group, $\Mod(S)$, of orientation preserving
homeomorphisms modulo those isotopic to the identity, acts naturally
on $\calC(S)$ via its action on the essential simple closed curves on
$S$.  Given a simplex $\sigma \in \calC(S)$, we denote by
$$
\twist(\sigma) < \Mod(S)
$$
the free abelian group generated by Dehn twists about the curves
represented by the vertices of $\sigma$.

\bold{The pants graph and marking graph.}  Central to our discussion
here will be the quasi-isometric model for the Weil-Petersson metric
obtained from the graph of pants decompositions on surfaces.  The
isotopy class of a pants decomposition $P$ of $S$ corresponds to a
vertex of $P(S)$, and two vertices corresponding to pants
decompositions $P$ and $P'$ are joined by an edge if they differ by an
{\em elementary move}, namely, if $P'$ can be obtained from $P$ by
replacing one of the isotopy classes of simple closed curves
represented in the pants decomposition $P$ with another that
intersects it minimally.  This defines a distance $d_P(\cdot, \cdot)$
in the pants graph.

Then there is a coarsely defined projection map
$$Q:\Teich(S) \to P(S)$$
that associates to each $X \in \Teich(S)$ a Bers pants decomposition
on $X$.
\begin{theorem}{theorem:pants:quasi}
{\rm (\cite[Thm. 1.1]{Brock:wp})}
The map $Q$ is a quasi-isometry.
\end{theorem}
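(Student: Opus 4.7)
The plan is to prove the two quasi-isometric inequalities separately, after first checking that $Q$ is well-defined up to bounded error in $P(S)$. A Bers pants decomposition of $X$ consists of simple closed geodesics of length at most $L_S$; by the collar lemma their number is bounded and any two such curves have bounded pairwise intersection. Consequently any two Bers decompositions of a given $X$ lie within a universal distance in $P(S)$, so different choices of $Q(X)$ agree up to bounded error.

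For the upper bound $d_P(Q(X),Q(Y)) \le A\,\dwp(X,Y) + B$, I would subdivide a Weil--Petersson geodesic from $X$ to $Y$ into segments of WP length at most $1$, reducing to the case $\dwp(X,Y) \le 1$. Using Wolpert's Lipschitz bound for $\ell_\gamma^{1/2}$ along WP geodesics, a Bers curve at $X$ has universally bounded length at $Y$, and vice versa. By the collar lemma, two simple closed geodesics of bounded length have bounded intersection number, and hence the associated pants decompositions lie at bounded distance in $P(S)$.

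The substantive direction is the lower bound $\dwp(X,Y) \le A'\,d_P(Q(X),Q(Y)) + B'$. It suffices to show that whenever $P, P'$ are adjacent vertices of $P(S)$ there exist $X, X' \in \Teich(S)$ with $P$ and $P'$ Bers decompositions of $X$ and $X'$ respectively, and $\dwp(X,X') \le C$ for a universal constant $C$; one then chains such pairs along a pants-graph geodesic connecting $Q(X)$ to $Q(Y)$, and uses the preliminary step to handle the endpoints. To produce $X$ and $X'$, I would approach the Weil--Petersson boundary stratum of $\overline{\Teich(S)}$ corresponding to the curves common to $P$ and $P'$ (all but one). Wolpert's asymptotic formula for $g_{\WP}$ near such a stratum shows that once these common curves are pinched very short, a Fenchel--Nielsen motion implementing the elementary move on the remaining curve can be carried out in a ball of bounded WP radius, since twist motions about the pinching curves contribute vanishing WP length as $\ell \to 0$.

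The main obstacle is producing the universal constant $C$: although an elementary move is combinatorially atomic, the associated change of hyperbolic structure is not a priori geometrically small, and carrying out the exchange of one pants curve while retaining control of the other curves requires delicate use of the WP metric asymptotics. Brock's original approach in \cite{Brock:wp} resolves this by routing through 3-manifold geometry, identifying $\dwp(X,Y)$ up to multiplicative and additive error with the convex-core volume of the quasifuchsian manifold $Q(X,\bar Y)$, and then estimating that volume by counting pleated surfaces along any pants-graph path joining the Bers markings of $X$ and $Y$. The key technical input is the two-sided comparison between $\dwp$ and convex-core volume, which in turn depends on precise WP metric and curvature asymptotics near pinching strata.
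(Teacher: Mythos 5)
The paper does not prove this theorem; it simply cites it as Theorem~1.1 of \cite{Brock:wp}, so there is no in-paper argument for you to match. That said, your reconstruction is worth commenting on. Your outline of the upper bound (subdivide a WP geodesic into unit pieces, use Wolpert's uniform Lipschitz bound on $\ell_\gamma^{1/2}$ to see that Bers curves stay Bers-length, and use the collar lemma plus a combinatorial bound to control pants-graph distance) is correct and is the standard argument. You also correctly identify the lower bound as the substantive direction and correctly attribute Brock's published proof to the convex-core-volume comparison for quasifuchsian manifolds.

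Where I would push back is your assessment that the elementary, pinching-based route to the lower bound faces an essential obstacle. The constant $C$ you need is produced by the following observation, which fills the gap you describe. Fix a pants decomposition $P$ and let $B_P=\{X\in\Teich(S): \ell_\gamma(X)\le L_S \ \forall \gamma\in P\}$ be the set of $X$ for which $P$ is a Bers decomposition. Then $B_P$ has WP diameter bounded by a constant depending only on the topological type of $S$. Indeed, $B_P$ is invariant under the free abelian group $\twist(P)$ of Dehn twists about curves of $P$, and the quotient $\bar B_P/\twist(P)$ has compact closure in the quotient of the WP completion $\compl$. Moreover $\twist(P)$ fixes the maximal stratum $\SSS_P\subset\compl$ (a single noded surface) pointwise, so for any $X\in B_P$ and any $T\in\twist(P)$ one has $\dwp(TX,\SSS_P)=\dwp(X,\SSS_P)$; combined with compactness of a fundamental domain this gives $\dwp(X,X')\le 2\sup_{B_P}\dwp(\cdot,\SSS_P)<\infty$ uniformly. (Here one uses that $\compl$ is a complete CAT(0) space and that twist orbits stay within a bounded distance of their common fixed point.) Since for adjacent $P,P'$ in $P(S)$ there is a single $X$ of which both $P$ and $P'$ are Bers decompositions --- pinch the common curves and sit at a point of the complexity-one factor where both elementary-move curves are short --- the sets $B_P$ and $B_{P'}$ intersect, and chaining the diameter bound along a pants-graph geodesic yields the lower quasi-isometry inequality. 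So the WP-asymptotics approach you sketch is not merely heuristic; it can be made to close without appealing to 3-manifold volumes, although Brock's original argument does indeed go through convex cores as you say.
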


The {\em marking graph} $\widetilde M(S)$ is the graph whose vertices are
markings (as above) and whose edges correspond to elementary moves
which correspond to twists of transversals around pants curves, and
(roughly) interchange of pants curves and transversals (see
\cite{Masur:Minsky:CCII}). We denote the path metric associated to this graph by
$d_{\widetilde M(S)}$. 
The relevant property for us is that $\widetilde M(S)$ is
connected, and is acted on
isometrically and cocompactly by the mapping class group. Note that
$\widetilde M(S)$ ``fibres'' over $P(S)$ by the map that forgets the
transversals. 
\marginpar{Is that what we need?}

\bold{Weil-Petersson geodesics.}  A {\em Weil-Petersson geodesic} is
denoted $\geod:J \to \Teich(S)$ where $J\subset \R$ is an interval,
and $\geod$ is geodesic parametrized by arclength, with respect to the
Weil-Petersson distance $\dwp$.

Let $\alpha=\inf J$ and $\omega=\sup J$.  If $\omega\in J$
(respectively $\alpha\in J$) we say the forward (respectively
backward) end of $\geod$ is closed. If $\omega\notin J$ we require
that $\geod$ cannot be extended past $\omega$
(i.e. $\geod(t)$ exits every compact set in $\Teich(S)$ as 
$t\to\omega$), and we
say the forward end of $\geod$ is open; and similarly for the backward
end.

We call $\geod$ a {\em segment} if $J=[\alpha,\omega]$, a {\em ray} if
$J=[\alpha,\omega)$ or $J=(\alpha,\omega]$, and a {\em line} if
$J=(\alpha,\omega)$.  If $J$  (hence $\geod$) has infinite length,
we call $\geod$ an unbounded ray or line.

If the forward (resp. backward) end of $\geod$ is closed we denote by
$\nu^+(\geod)$ a choice of Bers marking   $\nu_{\geod(\omega)}$
for the surface $\geod(\omega)$.
(resp. $\nu^-(\geod)=\nu_{\geod(\alpha)}$).  
For open ends we have the notion of {\em ending lamination}, which we
will define presently.

\bold{Geodesic length functions and ending laminations for rays.}  To
each isotopy class of essential non-peripheral simple closed curves
there is an associated {\em
  geodesic length function}
$$
\ell_\alpha:\Teich(S)\to\R_+
$$ 
that assigns to each $X \in \Teich(S)$ the arclength $\ell(\alpha^*)$
of the geodesic representative $\alpha^*$ of $\alpha$ on $X$.  Given a
path $\geod(t)$ in $\Teich(S)$,
$\ell_\alpha$ determines a natural
{\em length function} along a geodesic $\geod$:
$$\ell_{\geod,\alpha}(t) = \ell_\alpha(\geod(t)).$$
When $\geod(t)$ is a Weil-Petersson geodesic, it is due to Wolpert
(see \cite{Wolpert:Nielsen}) that the function
$\ell_{\geod,\alpha}(t)$ is strictly convex.

In \cite{\BMMI} we study the following definitions for a 
Weil-Petersson geodesic ray $\ray$.
\begin{definition}{ending measure}
  An {\em ending measure} for a geodesic ray $\ray(t)$ is
  any limit $[\mu]$ in $\pl(S)$, Thurston's space of projective measured laminations,  of the projective classes $[\gamma_n]$
  of any infinite family of distinct Bers curves for $\ray$.
\end{definition}

We pay special attention to simple closed curves whose length decay to zero.
\begin{definition}{pinching curve}
  A simple closed curve $\gamma$ is a {\em pinching curve} for $\ray$ if
 $\ell_{\ray,\gamma}(t) \to 0$
as $t \to \omega$.
\end{definition}

Taking the union of the support of all ending measures together
with the pinching curves for $\ray$ we obtain the {\em ending lamination
$\elam(\ray)$.} That this is in fact a lamination follows from
\cite[Prop. 2.9]{\BMMI}, which states: 

\begin{theorem}{EL def}
  If $\ray(t)$ is a Weil-Petersson geodesic ray, the pinching curves
  and supports of ending measures for $\ray$ have no transverse
  intersection. Hence their union $\elam(\ray)$ is a geodesic
  lamination. 
\end{theorem}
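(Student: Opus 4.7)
The plan is to verify three non-intersection statements that together imply $\elam(\ray)$ is a geodesic lamination: (a) any two pinching curves are disjoint; (b) a pinching curve and the support of any ending measure have no transverse intersection; (c) the supports of any two ending measures have no transverse intersection. Since $i(\mu,\nu)=0$ on $\ml(S)$ is equivalent to saying the supports of $\mu$ and $\nu$ do not cross transversely, each of (a), (b), (c) reduces to a computation of an intersection number, using the continuity and bilinearity of $i(\cdot,\cdot)$.

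For (a), I would argue by contradiction using the collar lemma. If $\alpha, \beta$ are pinching with $i(\alpha,\beta)>0$, then $\ell_\beta(\ray(t)) \geq i(\alpha,\beta)\cdot w(\ell_\alpha(\ray(t)))$, where the collar width $w(L) \to \infty$ as $L \to 0$; since $\ell_\alpha(\ray(t)) \to 0$, this forces $\ell_\beta(\ray(t)) \to \infty$, contradicting the fact that $\beta$ pinches. For (b), write $[\gamma_n] \to [\mu]$ in $\pl(S)$ for distinct Bers curves $\gamma_n$ that are Bers at times $t_n$. The classical fact that only finitely many simple closed curves have length $\leq L_S$ on any compact subset of $\Teich(S)$ forces $t_n \to \omega$. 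Then $\ell_\alpha(\ray(t_n)) \to 0$ and $\ell_{\gamma_n}(\ray(t_n)) \leq L_S$, so the collar lemma gives $i(\alpha,\gamma_n) \leq L_S/w(\ell_\alpha(\ray(t_n))) \to 0$, whence $i(\alpha,\gamma_n) = 0$ for all large $n$. Choosing scalings $c_n \to \infty$ so that $\gamma_n/c_n \to \mu$ in $\ml(S)$ -- possible because distinctness of the $\gamma_n$ forces their lengths on any fixed reference surface to tend to infinity -- continuity of $i$ yields $i(\alpha,\mu) = \lim i(\alpha,\gamma_n)/c_n = 0$.

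Step (c) is the most delicate, since the approximating Bers sequences for two ending measures $\mu,\mu'$ need not share times. The plan is to pass to matched subsequences $\gamma_n, \gamma'_n$ whose Bers times $t_n, t'_n$ satisfy $|t_n - t'_n| \leq 1$, which is possible since $t_n, t'_m \to \omega$ and $t'_m$ exits every compact interval. Wolpert's Lipschitz bound on $\sqrt{\ell_\alpha}$ along WP geodesics then controls length change over unit time, so at a common intermediate time $\tau_n$ both $\ell_{\gamma_n}(\ray(\tau_n))$ and $\ell_{\gamma'_n}(\ray(\tau_n))$ are uniformly bounded. Two simple closed curves of uniformly bounded length at a fixed hyperbolic surface have uniformly bounded intersection number (the upper-bound form of the collar lemma), so $i(\gamma_n,\gamma'_n)$ is bounded. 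Normalizing by scalings $c_n, c'_n \to \infty$ as in (b), continuity of $i$ gives $i(\mu,\mu') = \lim i(\gamma_n,\gamma'_n)/(c_n c'_n) = 0$.

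The main obstacle I anticipate is (c): the combinatorial matching of Bers times from two independent sequences together with a careful translation of Wolpert's $\sqrt{\ell}$-Lipschitz estimate into uniform length bounds at common intermediate times. If the direct matching becomes unwieldy, I would fall back on Wolpert's extension of convexity of length functions to measured laminations, showing via convexity and cofinal vanishing that $\ell_\mu(\ray(t)) \to 0$ as $t \to \omega$ for any ending measure $\mu$, and then argue directly that two measured laminations whose lengths pinch along a common WP ray cannot transversely intersect.
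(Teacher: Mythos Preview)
This paper does not prove the statement; it is imported verbatim as Proposition~2.9 of the companion paper \cite{\BMMI}. So there is no in-paper proof to compare against, and I evaluate your argument on its own merits. Your decomposition into (a), (b), (c) is the right one, and your collar-lemma arguments for (a) and (b) are correct.

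The genuine gap is in (c). The matching claim---that one can pass to subsequences with $|t_n - t'_n|\le 1$---is not justified when $\omega=\infty$: the two Bers-time sequences can be arbitrarily sparse and out of phase (say $t_n=2^{2n}$, $t'_m=2^{2m+1}$), and nothing in the definition of a Bers curve forces them to interlace. Your fallback assertion that $\ell_\mu(\ray(t))\to 0$ is also unproven in the infinite-length case: the secant bound from convexity, comparing the value $1$ at $t=0$ with the value at most $L_S/c_n$ at $t=t_n$, gives in the limit only $\ell_\mu(\ray(t))\le \ell_\mu(\ray(0))$, since $t/t_n\to 0$ when $t_n\to\infty$.

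However, boundedness is all you need, and it removes the matching step entirely. Normalize so that $\gamma_n/c_n\to\mu$ in $\ml(S)$ with $\ell_\mu(\ray(0))=1$; the convexity argument just described gives $\ell_\mu(\ray(t))\le 1$ for every $t$. Now apply the collar lemma at time $t'_m$: since $\ell_{\gamma'_m}(\ray(t'_m))\le L_S$, the curve $\gamma'_m$ has a collar of definite width, and a lamination of length at most $1$ on $\ray(t'_m)$ can cross it only a uniformly bounded number of times, so $i(\mu,\gamma'_m)$ is uniformly bounded. Dividing by $c'_m\to\infty$ and using continuity of $i$ gives $i(\mu,\mu')=0$. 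This repairs (c) cleanly and avoids any synchronization of the two Bers sequences.
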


\bold{Ending data.}
For each open end of a geodesic $\geod$ we thus have an ending
lamination; we denote these by $\lambda^+(\geod)$ for the forward
($\omega$) end and $\lambda^-(\geod)$ for the backward ($\alpha$)
end. 

If (say) the forward end of $\geod$ is closed, so that
$\geod(\omega)\in\Teich(S)$, then we let $\nu^+(\geod)$ denote a
Bers marking for $\geod(\omega)$ (if there are several we pick one arbitrarily). Define $\nu^-(\geod)$ similarly. In general we call
$\nu^\pm(\geod)$ or $\lambda^\pm(\geod)$ the {\em ending data} of
$\geod$, and if we do not wish to be specific about whether they are
markings or laminations we use the notation $\nu^\pm$.

\bold{The completion of the Weil-Petersson metric and its strata.}
The failure of completeness of the Weil-Petersson, due to Wolpert and Chu (see
\cite{Wolpert:noncompleteness} and \cite{Chu:noncompleteness}) arises
from finite length  paths in the Weil-Petersson metric that leave every compact set corresponding to
``pinching deformations'' where the length of a family of  simple
closed curves tends to zero.  If the hyperbolic metrics on the
complement of this family of curves converge to finite area hyperbolic
structures on their complementary subsurfaces one may identify this
{\em nodal surface} with the corresponding point in the Weil-Petersson
completion.  Masur showed that the completion is naturally the
augmented Teichm\"uller space (see
\cite{Bers:nodes,Abikoff:degenerating}) obtained by adjoining {\em
  boundary strata} corresponding to (products of) lower dimensional
Teichm\"uller spaces.  

These strata and their adjunction are best understood by extending
geodesic length functions $\ell_\alpha$ to allow their vanishing.
Precisely, if $P$ is a pants decomposition of $S$ then $P$ determines
a maximal simplex in $\CC(S)$, and a {\em frontier space subordinate
  to $P$}, $\SSS_\sigma$, is determined by the vanishing of the length
functions $\{\ell_\alpha = 0 \} $ for the simple closed curves
$\alpha$ representing the vertices of $\sigma$.  The topology on the
union of $\Teich(S)$ with the frontier spaces subordinate to $P$ is
given by {\em extended Fenchel-Nielsen coordinates} for $\Teich(S)
\cup \SSS_\sigma$ in which the length parameters $\ell_\alpha$, are
extended to take values in $\reals_{\ge 0}$, and twist parameters
$\theta_\alpha$ are undefined on $\SSS_\sigma$ for each $\alpha \in
\sigma^0$.  Then the union is topologized by the requirement that
$\ell_\alpha$ vary continuously.  See \cite{\BMMI}
\cite{Wolpert:compl}.

\bold{Hierarchy paths and the distance formula.}  Though the pants
complex gives a coarse notion of distance in the Weil-Petersson
metric (via Theorem \ref{theorem:pants:quasi}), it is not at all clear what form distance minimizing paths may
take.  Nevertheless, a kind of combinatorial formula to estimate pants
distance arises out of consideration of the curve complex $\calC(S)$
and the curve complexes $\calC(Y)$ of subsurfaces $Y \subset S$
considered simultaneously.

First of all, a notion of {\em projection to a subsurface} is defined:
given a proper, essential subsurface $W \subset S$, there is a
projection
$$\pi_W \colon \calC(S) \to \calP(\calC(W))$$
from the curve complex $\calC(S)$ to the power set of the curve
complex $\calC(W)$ as follows: for any $\gamma \in \calC(S)$ with
$\gamma$ isotopic into the complement of $W$, we set $\pi_W(\gamma) =
\emptyset$.  Now assuming $W$ is not an annulus:
If $\gamma$ is isotopic into $W$ then we set
$\pi_W(\gamma) = \gamma$.  Otherwise, after isotoping $\gamma$ to
minimize the number of components of $\gamma\intersect W$, we take for
each arc $a$ of the intersection the boundary components of a regular
neighborhood of $a\union \boundary W$ which are essential curves in
$W$. The union of these is $\pi_W(\gamma)$. 


The case that $W$ is an annulus is slightly different: here we let
$\CC(W)$ denote the complex whose vertices are arcs connecting the
boundaries of $W$ up to isotopy rel endpoints, and whose edges are
pairs of arcs with disjoint interiors. If $\gamma$ intersects $W$
essentially we lift it to the annular cover associated to $W$, which
we identify with $W$, and let $\pi_W(\gamma)$ be the union of
components of the lift that connect the boundary components of
$W$. Note that $\CC(W)$ is quasi-isometric to $\Z$ and its distance
function measures a coarse form of twisting around the annulus $W$. This is
sometimes called the twist complex of $W$.

The {\em projection distances}
$$d_W(\sigma,\gamma) = \diam(\pi_W(\sigma),\pi_W(\gamma))$$
give a useful notion of the relative distance between simplices
$\sigma$ and $\gamma$ as seen from the subsurface $W$.  We can define
this just as well when $\sigma$ or $\gamma$ are markings. 
Then one has the following Lipschitz property for the projections
$\pi_W$
(see \cite[Lemma 2.3]{Masur:Minsky:CCII}):
\begin{proposition}{Lipschitz projection}
For any simplex $\sigma \in \calC(S)$ and any subsurface $W
\subset S$, if $\pi_W(\sigma) \not= \emptyset$ then we have 
$$\diam_{\calC(W)}(\pi_W(\sigma)) \le 2.$$

Similarly, if $\mu$ and $\mu'$ are pants decompositions or markings
on $S$ differing by an elementary move, 
then we have
 $$d_W(\mu,\mu') \le 4.$$
\end{proposition}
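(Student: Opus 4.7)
The plan is to treat the two assertions in sequence, reducing the second to the first via the fact that curves shared by $\mu$ and $\mu'$ account for most of the projection, and to reduce both to the ``disjointness in $W$'' principle for the pieces that make up each projection.

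For the first assertion, fix $\sigma = \{\gamma_1, \ldots, \gamma_n\}$ a simplex with $\pi_W(\sigma)\ne \emptyset$. I would start by bounding $\diam_{\calC(W)}(\pi_W(\gamma_i))$ for a single curve. When $W$ is not an annulus, $\gamma_i \cap W$ is either $\gamma_i$ itself or a disjoint union of essential arcs with endpoints on $\partial W$, and the essential components of the boundary of a regular neighborhood of $(\gamma_i \cap W) \cup \partial W$ are realized simultaneously as pairwise disjoint curves in $W$, so $\diam_{\calC(W)}(\pi_W(\gamma_i)) \le 1$. When $W$ is an annulus, the full preimage of $\gamma_i$ in the annular cover is a disjoint union of arcs and circles, and the essential arcs (those joining the two boundary components) are disjoint, giving $\diam \le 1$. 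For two distinct $\gamma_i,\gamma_j \in \sigma$, I would introduce in the non-annular case the auxiliary projection $\pi_W(\gamma_i \cup \gamma_j)$, obtained from a regular neighborhood of $((\gamma_i \cup \gamma_j) \cap W) \cup \partial W$; after a small isotopy this auxiliary set is disjoint from each of $\pi_W(\gamma_i)$ and $\pi_W(\gamma_j)$ in $W$, so lies within $\calC(W)$-distance $1$ of each, yielding total diameter at most $2$. The annular case is cleaner, as the preimages of all $\gamma_i$ remain simultaneously disjoint in the cover. Iterating this pairwise argument gives the bound $\diam_{\calC(W)}(\pi_W(\sigma)) \le 2$.

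For the second assertion, I would split on whether some curve common to $\mu$ and $\mu'$ has nonempty projection to $W$. If some $\gamma \in \mu \cap \mu'$ does, then part~(1) applied to the simplex of pants curves of $\mu$, and to that of $\mu'$, gives $\diam(\pi_W(\mu)) \le 2$ and $\diam(\pi_W(\mu')) \le 2$; since $\pi_W(\gamma)$ lies in both, the triangle inequality gives $d_W(\mu,\mu') \le 4$. Otherwise every common curve misses $W$, which forces $W$ to be contained in the four-holed sphere or one-holed torus $F$ supporting the elementary move, and the bound then follows from the fact that the replaced and replacing curves intersect minimally in $F$. The marking-graph case is handled by the same dichotomy applied to each of the elementary-move types (a twist of a transversal around its pants curve, or the exchange of a pants curve with its transversal), noting in each case that only a bounded number of components of the marking change and each by a controlled amount.

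The main technical obstacle I anticipate is the annular case of the second assertion, specifically when the elementary move is a twist of a transversal around a pants curve whose annular neighborhood is $W$. In that situation the twist coordinate of the projection jumps by a definite amount, and verifying that this jump does not exceed $4$ requires a direct intersection-number computation of the relevant arcs in the annular cover rather than a reduction to the first part.
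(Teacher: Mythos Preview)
The paper does not give its own proof of this proposition; it is quoted as background from \cite[Lemma~2.3]{Masur:Minsky:CCII}. So there is no argument in the present paper to compare yours against, and your outline is essentially the standard one from that reference.

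A couple of comments on the details. For the first assertion, the bridging step via $\pi_W(\gamma_i\cup\gamma_j)$ works but is more than you need: since the $\gamma_i$ are pairwise disjoint, all their arcs in $W$ are simultaneously disjoint, and the surgered curves can be realized disjointly all at once (parallel to $\partial W$ inside a collar, and in disjoint neighborhoods of the arcs outside it). This already gives diameter $\le 1$ for the entire simplex in the non-annular case; the stated bound of $2$ is simply not sharp here.

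For the second assertion your dichotomy is the right one for pants decompositions. In the marking case, be careful: applying part~(1) to $\base(\mu)$ does \emph{not} bound $\diam_{\calC(W)}(\pi_W(\mu))$, since $\pi_W(\mu)$ also contains projections of the transversals, which intersect the base curves. You do flag the marking case for separate treatment, and the obstacle you single out (annular $W$ with core a base curve, under a twist move) is exactly where the constant is tight and must be checked by a direct computation in the annular cover; that computation is routine but cannot be skipped.
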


There is a strong relationship between the geometry of geodesics in
such curve complexes, projection distances, and a certain type of
efficient path in $P(S)$ called a {\em hierarchy path}.
These considerations, developed in \cite{Masur:Minsky:CCII} 
can be summarized in the following theorem. 

Given $n \ge 0$ let
$[[n]]_M$ denote the quantity
\begin{displaymath}
[[n]]_M = \left\{
\begin{array}{cc}
n & \ \text{if} \ n \ge M, \ \text{and} \\ 0 & \text{otherwise.}
\end{array}\right.  
\end{displaymath}
Further, given $c_1 >1$ and $c_2 >0$ we denote by $\asymp_{c_1,c_2}$
equality up to multiplicative error $c_1$ and additive error $c_2$.
In other words, we write $x \asymp_{c_1,c_2} y$ whenever
$$ \frac{x}{c_1} - c_2 \le y \le c_1 x + c_2.$$
\begin{theorem}{hierarchy paths}{\sc (Hierarchy Paths)}
Given pants decompositions $P_1$ and $P_2$ in $P(S)$, there is a path
$\rho \colon [0,n] \to P(S)$ joining $\rho(0) = P_1$ to $\rho(n) =
P_2$ with the following properties.
\begin{enumerate}
\item There is a collection $\{Y\}$ of essential, non-annular
  subsurfaces of $S$, called {\em component domains} for $\rho$, so
  that for each component domain $Y$ there is a connected interval
  $J_Y \subset [0,n]$ with $\bdry Y \subset \rho(j)$ for each $j \in
  J_Y$.
\item There is an $M_1 >0$ so that for each essential subsurface $Y
  \subset S$ with $d_Y(P_1,P_2) >M_1$, $Y$  is a component domain for $\rho$.
\item For component domain $Y$, there is a geodesic $g_Y \subset \calC(Y)$ so
  that for each $j \in J_Y$, there is an $\alpha \in g_Y$ with
  $\alpha \in \rho(j)$.
\item \label{bded image}
There is an $M_2>0$ so that if $J_Y = [t_1,t_2]$ and $t > t_2$ then
  $d_Y(\rho(t),\rho(t_2)) < M_2$ while for $t< t_1$ we have
  $d_Y(\rho(t),\rho(t_1)) < M_2$.
\item 
\label{dist formula} {\sc (The Distance Formula)}
Given any $M_3 \ge M_1$, there exists $c_1>1$ and $c_2>0$,  so that
$$d_P(P_1,P_2) \asymp_{c_1,c_2} 
\Tsum_Y
 \left[[
  d_Y(P_1,P_2) \right]]_{M_3}.$$
where the notation $\Tsum_Y$ indicates that the sum is
  taken over all {\em non-annular} subsurfaces $Y$ including $S$ itself.
\item
  \label{quasigeodesic} There is a $K_H >1$ so that the path $\rho$
  satisfies
\begin{equation}
\frac{1}{K_H}  \le \frac{d_P(\rho(i),\rho(j))}{|j-i|}  \le 1.
\end{equation}

\end{enumerate}
The whole surface $S$ is always a component domain for $\rho$.  The
geodesic $g_S$ is called the {\em main geodesic} for the hierarchy
path $\rho$.
\end{theorem}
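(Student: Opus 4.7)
The plan is to build $\rho$ as a resolution of a Masur--Minsky hierarchy connecting $P_1$ to $P_2$, and then read off each numbered property from the corresponding theorem in \cite{Masur:Minsky:CCII}. Concretely, first I would extend $P_1$ and $P_2$ to complete clean markings $\mu_1,\mu_2$ (by choosing arbitrary transversals); this is cheap since a pants decomposition has only boundedly many compatible markings. Then I would invoke the main construction of \cite{Masur:Minsky:CCII} to produce a hierarchy $H=H(\mu_1,\mu_2)$: a collection of tight geodesics in the curve complexes $\CC(Y)$ of subsurfaces $Y\subseteq S$, organized by the subordinacy relations, with main geodesic $g_S\subset\CC(S)$ running from a vertex of $\pi_S(\mu_1)$ to a vertex of $\pi_S(\mu_2)$.

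Next I would pass from $H$ to a path in $P(S)$ by the \emph{resolution} procedure: one chooses a monotone sweep through the partially ordered set of simplices of $H$, producing a sequence of (clean) markings $\mu_1=m_0,m_1,\dots,m_n=\mu_2$ in $\widetilde M(S)$ with consecutive markings differing by an elementary move. Forgetting transversals gives the path $\rho\colon [0,n]\to P(S)$, with $\rho(i)=\base(m_i)$. The set of component domains of $\rho$ is declared to be the set of domains $Y$ that appear as supports of geodesics of $H$; the interval $J_Y$ is the set of resolution times during which $\partial Y$ sits in the current marking (by construction of the resolution, $\partial Y$ is introduced before any vertex of $g_Y$ is used and is not destroyed until $g_Y$ is exhausted), which proves (1) and the ``$\alpha\in\rho(j)$'' clause of (3) with $g_Y$ the corresponding geodesic of $H$.

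For (2), I would apply the ``Large Link Lemma'' of \cite{Masur:Minsky:CCII}: there is a threshold $M_1$ such that any $Y$ with $d_Y(\mu_1,\mu_2)>M_1$ must appear as a domain in $H$, hence as a component domain of $\rho$; the passage $\mu_i\leftrightarrow P_i$ only changes projections by a bounded amount by Proposition~\ref{Lipschitz projection}, so adjusting $M_1$ absorbs this. Property (4) is the \emph{Bounded Geodesic Image Theorem} of \cite{Masur:Minsky:CCII}: for $t$ outside $J_Y$, the image $\pi_Y(\rho(t))$ stays within a uniform distance $M_2$ of the relevant endpoint of $g_Y$, because the hierarchy forces any component domain of $\rho(t)$ projecting far into $\CC(Y)$ to be nested inside $Y$ during $J_Y$. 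Property (5), the Distance Formula, is the central theorem of \cite{Masur:Minsky:CCII}, transported from markings to pants decompositions by noting that annular projections are not summed (the sum is primed), and that the non-annular terms are essentially insensitive to the choice of transversals (again by Proposition~\ref{Lipschitz projection}). Finally, for (6), the upper bound $d_P(\rho(i),\rho(j))\le|i-j|$ is automatic since consecutive $\rho$-vertices differ by an elementary move, while the lower bound $|i-j|\le K_H\, d_P(\rho(i),\rho(j))$ follows by applying the Distance Formula to both sides and observing that the resolution adds one step per ``advance'' in some component domain.

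The genuinely delicate step is (4), the bounded geodesic image property, together with checking that the ambiguity in the Bers marking vs.\ the pants decomposition does not contaminate the constants. Everything else is bookkeeping once the Masur--Minsky hierarchy is in hand; I would therefore isolate (4) as the single place where the hyperbolicity of $\CC(Y)$ and the tightness condition on hierarchy geodesics are genuinely used, and would organize the write-up so that the remaining items are straightforward transcriptions from \cite{Masur:Minsky:CCII} via the resolution.
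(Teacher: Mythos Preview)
The paper does not give a proof of this theorem at all: it is stated as a summary of results from \cite{Masur:Minsky:CCII}, with the one-line remark that ``these hierarchy paths in $P(S)$ are \emph{resolution sequences}, so called, of the \emph{hierarchies without annuli} defined in \cite[\S8]{Masur:Minsky:CCII}.'' Your proposal is correct and is essentially an expanded version of that citation, tracing each numbered property back to the corresponding result in \cite{Masur:Minsky:CCII}.

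One small difference worth noting: you build a full hierarchy $H(\mu_1,\mu_2)$ between markings and then forget transversals, whereas the paper invokes the \emph{hierarchies without annuli} of \cite[\S8]{Masur:Minsky:CCII}, which are constructed directly between pants decompositions and never involve annular domains. Both routes work, but the latter avoids the bookkeeping you flag about transversals contaminating constants, since annular projections simply never enter. Also, your attribution of (4) to the Bounded Geodesic Image Theorem is slightly off: that theorem controls projections of a \emph{single} geodesic to a subsurface it misses, whereas property (4) here is really the ``no backtracking'' statement for resolutions, which in \cite{Masur:Minsky:CCII} follows from the time-order structure of the hierarchy (Lemma 6.2 and the discussion around it) rather than from the Bounded Geodesic Image Theorem per se. This does not affect the correctness of your outline, only the precise citation.
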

Given $P_1$ and $P_2$ in $P(S)$, we denote by $\rho = \rho(P_1,P_2)$ an
arbitrary choice of hierarchy path joining $P_1$ to $P_2$.

These hierarchy paths in $P(S)$ are {\em resolution sequences}, so
called, of the {\em hierarchies without annuli} defined in
\cite[\S8]{Masur:Minsky:CCII}.  
The main construction of \cite{Masur:Minsky:CCII} actually takes place
in the marking graph $\widetilde{M} (S)$, and 
Theorem~\ref{hierarchy paths} can be restated for hierarchy
paths $\rho \colon [0,n] \to \widetilde{M}(S)$ in the marking graph,
where surfaces $Y$ are allowed to be annuli.  In particular, there is 
a distance formula for marking distance $d_{\widetilde{M}}$
which takes
the same form as (5), but the sum is taken over all essential
subsurfaces (see \cite[Thm. 6.12]{Masur:Minsky:CCII}).

There will be instances in the paper where it is appropriate to
consider hierarchy paths in $\rho$ in $P(S)$ and in
$\widetilde{M}(S)$, and we will make it clear from context which is
being considered.  The main construction of \cite{Masur:Minsky:CCII}
also allows for the case when the main geodesic $g_S$ is infinite or
bi-infinite, in which case the geodesic, and hence the hierarchy path,
describes curve systems that are asymptotic to a lamination or pair of
laminations in $\el(S)$ in the measure forgetting topology.



\bold{Bounded combinatorics.} Let  $\sigma$ and
$\sigma'$  be simplicies in $\calC(S)$  or let them be  markings.  We say $\sigma$ and $\sigma'$  have {\em
  $K$-bounded combinatorics} if for each essential subsurface $W
\subsetneq S$ which is intersected by both $\sigma$ and $\sigma'$, 
the projection distance satisfies
$$d_W(\sigma,\sigma') < K.$$

We note that for any hierarchy path $\rho \colon [0,n] \to P(S)$ or 
hierarchy path $\rho\colon [0,n] \to \widetilde{M}(S)$ whose
endpoints satisfy 
$$d_W(\rho(0),\rho(n)) <K$$ 
an application of the triangle inequality together with
part~(\ref{bded image}) of Theorem~\ref{hierarchy paths} guarantees
that for each $i$, $j$ in $[0,n]$, we have
\begin{equation}\label{bded hierarchy}
d_W(\rho(i),\rho(j)) < K + 2M_2.
\end{equation}


\bold{Recurrence for rays}  Given $X \in \Teich(S)$ we let 
$\sys(X)$ denote the length of the shortest closed geodesic in $X$.
A Weil-Petersson geodesic ray $\ray$ is said to be {\em recurrent} if
there is an $\epsilon >0$ and a family of times $t_n \to \infty$ for
which the hyperbolic surface $\sys(\ray(t_n)) > \epsilon$.  It was
shown in \cite{\BMMI} that the ending lamination uniquely determines
the asymptote class of for a recurrent ray.  Precisely, we have
\begin{theorem}{relc} Let $\ray$ and $\ray'$ be two geodesic rays with
  $\elam(\ray) = \elam(\ray')$.  If $\ray$ is recurrent, then $\ray$
  and $\ray'$ are strongly asymptotic.
\end{theorem}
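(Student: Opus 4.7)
The plan is to prove the theorem in two stages: first establish that $\ray$ and $\ray'$ stay at bounded Weil-Petersson Hausdorff distance apart (after a time shift), then promote this bound to decay to zero using the \CAT(0) geometry of the Weil-Petersson completion together with the recurrence hypothesis.

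For the first stage, I would use the ending-lamination equality $\elam(\ray)=\elam(\ray')$ to match up Bers markings along the two rays. Recurrence of $\ray$ prohibits pinching curves for $\ray$, since $\sys(\ray(t_n))>\epsilon$ along the recurrent sequence $t_n\to\omega$; hence by Definition~\ref{ending measure}, $\elam(\ray)$ is captured entirely by ending measures, and the Bers curves for $\ray$ converge projectively to a measure supported on $\elam(\ray)$. The same lamination must arise as a limit of ending measures (or of pinching curves) for $\ray'$. Passing through the quasi-isometry $Q$ of Theorem~\ref{theorem:pants:quasi} and applying the distance formula (Theorem~\ref{hierarchy paths}, item~(\ref{dist formula})) on the non-annular subsurfaces appearing in hierarchy paths $\rho(Q(\ray(t_n)),Q(\ray'(s_n)))$, and using part~(\ref{bded image}) to localize the relevant component domains, one obtains a bound $d_P(Q(\ray(t_n)),Q(\ray'(s_n)))\le B_0$ for an appropriately matched sequence $s_n$. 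Translating back through $Q$ produces the Weil-Petersson bound $\dwp(\ray(t_n),\ray'(s_n))\le B$.

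For the second stage, the Weil-Petersson completion is a \CAT(0) space, so taking $c$ to be a subsequential limit of $s_n-t_n$, the function
\[
\phi(t)=\dwp(\ray(t),\ray'(t+c))
\]
is convex and bounded, hence monotone non-increasing with a limit $L\ge 0$. Suppose for contradiction that $L>0$. The Flat Strip Theorem for \CAT(0) spaces produces a flat isometrically embedded strip between the two rays of width $L$. But along the recurrent sequence $t_n$, the point $\ray(t_n)$ lies in the $\epsilon$-thick part of $\Teich(S)$, where Weil-Petersson sectional curvature is bounded above by a strictly negative constant; this forbids any nontrivial flat strip passing through $\ray(t_n)$, forcing $L=0$ and hence strong asymptoticity.

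The main obstacle will be the first stage. The ending lamination does not directly encode twisting information around closed curves, so the annular projection coefficients $d_W(\cdot,\cdot)$ could in principle grow without bound between the Bers markings of $\ray$ and $\ray'$, even once non-annular projections are controlled. Recurrence of $\ray$ is essential here: it precludes pinching, which in turn bounds the amount of twisting that any curve in the Bers markings of $\ray$ can accumulate, and via the matching $t_n\leftrightarrow s_n$ this must agree up to bounded error with the twisting along $\ray'$. Executing this matching carefully through the hierarchy-path machinery of Theorem~\ref{hierarchy paths} is the technical heart of the argument; once bounded Hausdorff distance is in hand, the \CAT(0) contradiction is comparatively clean.
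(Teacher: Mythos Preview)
This theorem is not proved in the present paper; it is quoted from the prequel \cite{\BMMI}, so there is no proof here to compare against directly.

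Your Stage~2 is in the right spirit, though the Flat Strip Theorem applies to complete lines rather than rays, so some additional care is needed; this is fixable. Stage~1, however, has a genuine gap. You propose to bound $d_P\bigl(Q(\ray(t_n)),Q(\ray'(s_n))\bigr)$ via the distance formula, which would require uniform control of $d_W$ over all proper non-annular $W$, and you give no mechanism for this: two sequences of pants decompositions converging to the same point of $\EL(S)$ need not have uniformly bounded projections to proper subsurfaces, and recurrence of $\ray$ tells you nothing about subsurface projections along $\ray'$. Your ``main obstacle'' paragraph then switches to worrying about \emph{annular} coefficients, but those do not enter the pants-graph distance formula at all, so that discussion misses the actual difficulty. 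As written, Stage~1 restates the problem rather than solving it. The proof in \cite{\BMMI} avoids this combinatorial route entirely and works directly with the $\CAT(0)$ geometry, using the strict negative curvature available in the thick part and the fact (their Lemma~4.5, invoked in \S\ref{BG implies BC} of this paper) that any measure on the common filling ending lamination has bounded length along the recurrent ray.
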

Here, strongly asymptotic refers to the existence of parametrizations
$\ray(s)$ and $\ray'(s)$ (not necessarily by arclength) for which
$d(\ray(s),\ray'(s)) \to 0$.

Given $\epsilon>0$, a geodesic segment, ray, or line $\geod$ is said to
have $\epsilon$-bounded geometry if the length of the shortest closed
geodesic on $\geod(t)$ is bounded below by $\epsilon$ for each $t$ for
which $\geod(t)$ is defined.  
Then we observe that as a direct consequence of Theorem~\ref{relc} we
have the following.
\begin{corollary}{bded elc}
Let $\ray$ be a geodesic ray with $\epsilon$-bounded geometry.
Then if $\ray'$ satisfies $\elam(\ray) = \elam(\ray')$ then $\ray$ and
$\ray'$ are strongly asymptotic.
\end{corollary}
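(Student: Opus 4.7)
The plan is to observe that $\epsilon$-bounded geometry is a much stronger condition than recurrence, and then simply invoke Theorem~\ref{relc}. Recall that $\ray$ is defined to be recurrent if there exists some $\epsilon' > 0$ and a sequence of times $t_n \to \infty$ with $\sys(\ray(t_n)) > \epsilon'$. But if $\ray$ has $\epsilon$-bounded geometry, then $\sys(\ray(t)) \ge \epsilon$ for \emph{every} time $t$ in the domain of $\ray$, so one may pick any sequence $t_n \to \infty$ (which exists because the ray has infinite length) to witness recurrence with $\epsilon' = \epsilon/2$.

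Having established that $\ray$ is recurrent, the hypothesis $\elam(\ray) = \elam(\ray')$ puts us exactly in the situation of Theorem~\ref{relc}, which then yields parametrizations of $\ray$ and $\ray'$ with $d_{\WP}(\ray(s), \ray'(s)) \to 0$; this is the definition of $\ray$ and $\ray'$ being strongly asymptotic.

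There is essentially no obstacle here — the only thing to verify is that the ray has infinite length (so that $t_n \to \infty$ makes sense as times in the domain), which follows from the convention established earlier in the section that an unbounded ray has $J$ of infinite length, combined with the fact that $\epsilon$-bounded geometry forces $\ray(t)$ to stay in a subset of $\Teich(S)$ on which the Weil-Petersson metric is complete (so the ray does not run off into a boundary stratum at finite time). In particular, $\omega = +\infty$ in the notation of the section, and any sequence $t_n \to \omega$ works.
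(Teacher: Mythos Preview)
Your proof is correct and follows exactly the approach the paper intends: the paper states the corollary as ``a direct consequence of Theorem~\ref{relc}'' without further argument, and you have simply spelled out why $\epsilon$-bounded geometry implies recurrence (including the minor check that the ray has infinite length), after which Theorem~\ref{relc} applies verbatim.
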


The following Proposition  combining Lemma~2.10 and Corollary~2.12
of \cite{\BMMI} will be useful for our purposes.
\begin{proposition}{proposition:continuity}

  Let $\ray_n \to \ray_\infty$ be a sequence of segments or rays based
  at a fixed $X \in \Teich(S)$ that converge in the visual sphere, and
  assume $\ray_\infty$ has an ending measure $\mu$.  If $\ray_n$ is a
  segment, let $\mu_n$ be a Bers pants decomposition for its endpoint.
  If $\ray_n$ is a ray, let $\mu_n$ be any ending measure or pinching
  curves for $\ray_n$.  Let $\mu' \in \ml(S)$ a representative of any
  limit $[\mu']$ of projective classes $[\mu_n]$ in $\pl(S)$.  Then we
  have $$i(\mu,\mu') = 0.$$

 In particular, if $\mu$ fills the surface, then $|\mu| = |\mu'|$.


\end{proposition}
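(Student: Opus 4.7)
The strategy is to combine Wolpert's convexity of length functions along Weil-Petersson geodesics with the visual-sphere convergence hypothesis, and to bound the intersection pairing via the collar lemma.

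By definition of $\mu$, there are Bers curves $\alpha_k$ on $\ray_\infty(t_k)$ with $t_k \to \omega_\infty$ and $[\alpha_k] \to [\mu]$ in $\pl(S)$. Normalize by setting $\hat\alpha_k = \alpha_k/\ell_{\alpha_k}(X)$, so that (after passing to a subsequence) $\hat\alpha_k$ converges in $\ml(S)$ to a positive multiple $\hat\mu$ of $\mu$ with $\ell_{\hat\mu}(X)=1$. In the segment case let $\beta_n = \mu_n$; in the ray case use a diagonal argument to replace $\mu_n$ by a Bers curve $\beta_n$ realized at some time $\tau_n < \omega_n$, chosen so that $[\beta_n] \to [\mu']$ in $\pl(S)$. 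Set $\hat\beta_n = \beta_n/\ell_{\beta_n}(X)$, so $\hat\beta_n \to \hat\mu'$ in $\ml(S)$ with $\ell_{\hat\mu'}(X)=1$. Since intersection number is continuous on $\ml(S)\times \ml(S)$, the claim $i(\mu,\mu')=0$ is equivalent to $\lim_{k,n} i(\hat\alpha_k,\hat\beta_n) = 0$.

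The basic hyperbolic-geometric bound, via the collar lemma, is that for any simple closed geodesic $\alpha$ on a hyperbolic surface $Y$ with $\ell_\alpha(Y)\le L$ there is a constant $C=C(L)$ with $i(\alpha,\beta)\le C\,\ell_\beta(Y)$ for every simple closed geodesic $\beta$. Fix $k$. Visual-sphere convergence gives $\ray_n(t_k)\to \ray_\infty(t_k)$ in $\Teich(S)$ for $n$ large, so by continuity of length $\ell_{\alpha_k}(\ray_n(t_k))\le L_S+1$; thus $i(\alpha_k,\beta_n) \le C\,\ell_{\beta_n}(\ray_n(t_k))$. By Wolpert's convexity, extended continuously to the completion of $\Teich(S)$, the function $t\mapsto \ell_{\beta_n}(\ray_n(t))$ is convex on $[0,\tau_n]$, hence bounded above by the maximum of $\ell_{\beta_n}(X)$ and $\ell_{\beta_n}(\ray_n(\tau_n))$. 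The latter is at most a uniform multiple of $L_S$ because $\beta_n$ is Bers at $\ray_n(\tau_n)$, while the former tends to infinity since the eventually distinct systems $\beta_n$ have unbounded length on the fixed surface $X$. Hence $\ell_{\beta_n}(\ray_n(t_k))\le \ell_{\beta_n}(X)$ for $n$ large, and
\[
i(\hat\alpha_k,\hat\beta_n) \;=\; \frac{i(\alpha_k,\beta_n)}{\ell_{\alpha_k}(X)\,\ell_{\beta_n}(X)} \;\le\; \frac{C}{\ell_{\alpha_k}(X)} \;\longrightarrow\; 0 \ (k\to\infty),
\]
uniformly in $n$, yielding $i(\mu,\mu')=0$. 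The ``in particular'' statement follows since a filling measured lamination has minimal support, so any $\mu'$ with $i(\mu,\mu')=0$ must be supported on $|\mu|$, forcing $|\mu'|=|\mu|$.

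The principal obstacle is uniformly controlling $\ell_{\beta_n}$ on $[0,\tau_n]$ when $\ray_n(\tau_n)$ approaches the augmented-space boundary; Wolpert's extension of convexity to the Weil-Petersson completion is precisely what turns the far-end Bers bound into a usable bound at the interior time $t_k$. A secondary subtlety is the diagonal extraction in the ray case which replaces the ending-measure $\mu_n$ by an actual Bers curve along $\ray_n$, reducing that case to the form of the segment case.
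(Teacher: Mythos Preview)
Your approach is essentially the standard one (and almost certainly the one in \cite{\BMMI}, to which the paper simply defers): normalize by length at the basepoint, transport the Bers bound along the geodesic via Wolpert's convexity, and control intersection number at an intermediate time by the collar lemma. The paper gives no independent argument here, so there is nothing substantive to compare.

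There is, however, a small gap. You assert that the systems $\beta_n$ are ``eventually distinct'' and hence $\ell_{\beta_n}(X)\to\infty$, and you use this to conclude $\ell_{\beta_n}(\ray_n(t_k))\le \ell_{\beta_n}(X)$. This is not justified and can fail: for instance if each $\mu_n$ is a pinching curve and they all happen to equal a fixed $\gamma$, or if the Bers pants decompositions of the segment endpoints share a common curve to which $[\mu_n]$ accumulates. The fix is immediate and does not change your strategy: convexity gives only
\[
\ell_{\beta_n}(\ray_n(t_k)) \;\le\; \max\bigl(\ell_{\beta_n}(X),\,L_S\bigr),
\]
so that
\[
i(\hat\alpha_k,\hat\beta_n)\;\le\; \frac{C\max(\ell_{\beta_n}(X),L_S)}{\ell_{\alpha_k}(X)\,\ell_{\beta_n}(X)}
\;\le\; \frac{C}{\ell_{\alpha_k}(X)}\cdot\max\Bigl(1,\frac{L_S}{\sys(X)}\Bigr),
\]
which still tends to $0$ as $k\to\infty$ uniformly in $n$, because the $\alpha_k$ \emph{are} required to be distinct by the definition of an ending measure and hence $\ell_{\alpha_k}(X)\to\infty$. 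In other words, the divergence that drives the argument lives on the $\alpha_k$ side, not the $\beta_n$ side; once you rewrite the final inequality this way the proof goes through.

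Two minor remarks: in the segment case $\mu_n$ is a multicurve rather than a single curve, but the collar and convexity estimates apply componentwise with the same constants, so no change is needed. And for the diagonal replacement in the ray case you should note that $\tau_n$ can be taken larger than any fixed $t_k$ (for $n$ large), so that the convexity bound on $[0,\tau_n]$ indeed covers $t_k$.
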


We note that Lemma~2.9 and Lemma~2.10 in \cite{\BMMI} are not stated
for segments. However the proofs are verbatim true if we allow
$\ray_n$ to be a segment. 






\section{Bounded geometry implies bounded combinatorics}
\label{BG implies BC}

Let $\calK$ denote a compact subset of the moduli space $\calM(S)$.
If $\geod$ is a Weil-Petersson geodesic segment, ray or line whose
projection to $\calM$ lies in $\cal K$, then we say $\geod$ is {\em
  cobounded} or {\em $\calK$-cobounded}.  Let $\nu^\pm=\nu^\pm(\geod)$
denote the ending data of $\geod$ (markings or laminations, as in
Section \ref{preliminaries}).

\begin{theorem}{bg implies bc}
  If $\geod$ is $\calK$-cobounded then there is a $K$ depending only on
  $\calK$ so that the ending data $\nu^\pm$ of $\geod$ satisfy
  the bounded combinatorics condition:
$$
d_W(\nu^+,\nu^-) \le K
$$
where $W$ is any essential proper subsurface of $S$.
\end{theorem}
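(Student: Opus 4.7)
The plan is a compactness argument by contradiction. Suppose the conclusion fails; then there exist $\calK$-cobounded geodesics $\geod_n$ with ending data $\nu_n^\pm$, and essential proper subsurfaces $W_n\subsetneq S$ satisfying $d_{W_n}(\nu_n^+,\nu_n^-)\to\infty$.

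For the normalization step, the idea is that when $d_{W_n}(\nu_n^+,\nu_n^-)$ is very large, Theorem~\ref{hierarchy paths} forces $W_n$ to appear as a component domain in a hierarchy path from $\nu_n^-$ to $\nu_n^+$, with $\partial W_n$ contained in pants decompositions along a subinterval of length $\to\infty$. Through the Brock quasi-isometry $Q\colon\Teich(S)\to P(S)$ of Theorem~\ref{theorem:pants:quasi}, this means $\partial W_n$ is a Bers curve at some time $s_n$ along $\geod_n$. Since $\geod_n$ is $\calK$-cobounded, the length of $\partial W_n$ on $\geod_n(s_n)$ lies in $[\epsilon(\calK),L_S]$. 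I would choose $\phi_n\in\Mod(S)$ so that $\phi_n(\geod_n(s_n))$ lies in a fixed compact lift $\widetilde\calK\subset\Teich(S)$ of $\calK$. Because only finitely many isotopy classes of simple closed geodesics of length $\le L_S$ occur on any surface in the compact set $\widetilde\calK$, after passing to a subsequence I may assume $\phi_n(W_n)=W$ for a single fixed proper subsurface $W$. Replacing $\geod_n$ by $\phi_n\circ\geod_n$ and reparametrizing so that $s_n=0$, I obtain $\geod_n(0)\in\widetilde\calK$ with $d_W(\nu_n^+,\nu_n^-)\to\infty$ for the now-fixed $W$.

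The next step is extraction of a limit. Compactness of the unit tangent bundle over $\widetilde\calK$, together with convergence of initial data, yields (up to subsequence) a limit Weil-Petersson geodesic $\geod_\infty$ through $\widetilde\calK$ that is $\calK$-cobounded wherever defined. Applying Proposition~\ref{proposition:continuity} in both directions (valid for segments and rays, with a minor basepoint adjustment between $\geod_n(0)$ and $\geod_\infty(0)$), the ending data $\nu_n^\pm$ have subsequential limits, coinciding with the ending data $\nu_\infty^\pm$ of $\geod_\infty$. Since $\geod_\infty$ is cobounded, each $\nu_\infty^\pm$ is either a Bers marking (in the closed-end case) or, by Corollary~\ref{bded elc} together with Klarreich's identification of the Gromov boundary of $\CC(S)$ with $\EL(S)$, a filling ending lamination in $\EL(S)$. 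In all cases the projections $\pi_W(\nu_\infty^\pm)$ are well-defined nonempty subsets of $\CC(W)$ of uniformly bounded diameter (Proposition~\ref{Lipschitz projection}), so $d_W(\nu_\infty^+,\nu_\infty^-)<\infty$. Continuity of $\pi_W$ at filling laminations (following Klarreich) then gives $d_W(\nu_n^+,\nu_n^-)\to d_W(\nu_\infty^+,\nu_\infty^-)<\infty$, contradicting $d_W(\nu_n^+,\nu_n^-)\to\infty$.

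The principal obstacle is the simultaneous normalization carried out in the first step: one must both move $W_n$ to a fixed $W$ and translate $\geod_n$ into a fixed compact subset of $\Teich(S)$, even though the stabilizer of $W$ in $\Mod(S)$ does not act cocompactly on the portion of $\Teich(S)$ lying over $\calK$. The hierarchy-theoretic identification of a time $s_n$ at which $\partial W_n$ has controlled length is what reconciles these requirements: once $\geod_n(s_n)$ is translated into $\widetilde\calK$, there are only finitely many candidates for $\phi_n(\partial W_n)$, hence for $\phi_n(W_n)$, and a subsequence fixes both at once.
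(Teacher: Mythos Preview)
Your normalization step contains a circularity. You want a time $s_n$ along $\geod_n$ at which $\partial W_n$ is a Bers curve, and you extract it from the hierarchy path: since $d_{W_n}(\nu_n^+,\nu_n^-)$ is large, $W_n$ is a component domain of $\rho_n$ and $\partial W_n\subset\rho_n(j)$ for $j$ in a long interval $J_{W_n}$. But $\rho_n$ lives in $P(S)$, and what you need is that some $\rho_n(j)$ with $j\in J_{W_n}$ actually occurs as a Bers pants decomposition $Q(\geod_n(t))$. The Brock quasi-isometry only tells you that $Q\circ\geod_n$ is a quasi-geodesic in $P(S)$ with the same endpoints as $\rho_n$. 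In a $\delta$-hyperbolic space two such quasi-geodesics would fellow-travel, but $P(S)$ is not hyperbolic once $\dim_\cx\Teich(S)\ge 2$, and the substitute---Theorem~\ref{i bounded}---\emph{assumes} non-annular bounded combinatorics, precisely what you are trying to prove. There is thus no a priori reason for $\partial W_n$ to be short anywhere along $\geod_n$; its length (a strictly convex function of $t$) could have minimum far above $L_S$ even while $d_{W_n}(\nu_n^+,\nu_n^-)$ is enormous. Without this, you cannot arrange $\phi_n(W_n)=W$ for a fixed $W$, and the rest of the argument does not get started.

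The paper avoids this by never fixing $W$. It normalizes only the time parameter, placing the minimum of $\ell_{\geod_n,\partial W_n}$ at $t=0$, and establishes cocompactness of the space $\Gamma$ of triples $(\geod,\mu^+,\mu^-)$ (with $\mu^\pm$ measured laminations of unit length at $\geod(0)$ supported on the ending data). After passing to a limit $(\geod,\mu^+,\mu^-)\in\Gamma$, the curves $\partial W_n$ are passed to a limit $\sigma$ in $\PML(S)$. Since $\ell_{\geod,\sigma}$ still has its minimum at $t=0$, $\sigma$ cannot be supported on either filling ending lamination $\mu^\pm$ (Lemma~4.5 of \cite{\BMMI}), so $\sigma$ cuts the leaves of $\mu^\pm$ into uniformly bounded pieces. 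This propagates back to $\partial W_n$ and $\mu_n^\pm$ for large $n$ and directly bounds $d_{W_n}(\mu_n^+,\mu_n^-)$, a contradiction. The argument works with geometric intersection and Hausdorff limits and never invokes continuity of subsurface projections or any stability of quasi-geodesics in $P(S)$.
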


We will also deduce the following.
\begin{theorem}{bg near teich}
  For all $\epsilon>0$ there is $D>0$ so that each bi-infinite
  $\epsilon$ thick Weil-Petersson geodesic $\geod$ lies at Hausdorff
  distance $D$ in the Teichm\"uller metric from a unique Teichm\"uller
  geodesic $\tgeod$.

\end{theorem}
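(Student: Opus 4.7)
The plan is to combine Theorem~\ref{bg implies bc} with a compactness argument in the spirit of the previous section. First, Theorem~\ref{bg implies bc} supplies a constant $K=K(\epsilon)$ such that any $\epsilon$-thick bi-infinite Weil-Petersson geodesic $\geod$ has ending laminations $\lambda^\pm(\geod)$ with $K$-bounded combinatorics. In particular $\lambda^+(\geod)$ and $\lambda^-(\geod)$ are distinct filling laminations in $\EL(S)$ (no pinching can occur in the thick part, and the main-curve projections diverge to distinct points of the Gromov boundary of $\CC(S)$). By the standard Masur/Gardiner-Masur existence-and-uniqueness result for Teichm\"uller geodesics with prescribed distinct filling horizontal and vertical foliations, there is a unique Teichm\"uller geodesic $\tgeod$ with ending data $\lambda^\pm(\geod)$. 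Rafi's characterization of cobounded Teichm\"uller geodesics (equivalently, the combinatorial model provided by Theorem~\ref{hierarchy paths} applied to the hierarchy path shadowed by $\tgeod$) converts $K$-bounded combinatorics into $\epsilon'$-thickness for $\tgeod$, where $\epsilon' = \epsilon'(\epsilon)>0$.

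For the Hausdorff bound, I would argue by contradiction. Suppose the conclusion fails, so that there exist $\epsilon$-thick bi-infinite Weil-Petersson geodesics $\geod_n$ with associated Teichm\"uller geodesics $\tgeod_n$ and parameters $t_n$ with $d_T(\geod_n(t_n),\tgeod_n)\to\infty$. Acting by the mapping class group and reparametrizing so that $\geod_n(0)=\geod_n(t_n)$ lies in a fixed compact lift of a compact subset of the thick part of moduli space, I pass to a subsequence in which the initial data $(\geod_n(0),\geod_n'(0))$ converge in the unit tangent bundle; because the Weil-Petersson flow extends for all time on such initial data, the limit $\geod_\infty$ is a bi-infinite $\epsilon$-thick geodesic to which $\geod_n$ converges uniformly on compact intervals. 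Proposition~\ref{proposition:continuity}, applied to both ends and combined with the uniform bound on combinatorics, then gives convergence $\lambda^\pm(\geod_n)\to\lambda^\pm(\geod_\infty)$ in the Gromov boundary of $\CC(S)$, and in particular both limits again lie in $\EL(S)$.

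Finally, continuity of the correspondence between pairs in $\EL(S)$ with uniformly bounded combinatorics and their associated cobounded Teichm\"uller geodesics (once more from Rafi's thick machinery) yields $\tgeod_n\to\tgeod_\infty$ uniformly on compact sets in the Teichm\"uller metric, where $\tgeod_\infty$ is the Teichm\"uller geodesic associated to $\geod_\infty$ by Step~1. Hence $d_T(\geod_n(0),\tgeod_n)\to d_T(\geod_\infty(0),\tgeod_\infty)<\infty$, contradicting the assumption. Uniqueness of $\tgeod$ within Hausdorff-distance $D$ is automatic: any competitor would share the ending laminations $\lambda^\pm(\geod)$ and Teichm\"uller geodesics are determined by their ending data. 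The main obstacle I expect is the last continuity step, namely that convergence of pairs of $\EL(S)$-laminations (with uniformly bounded combinatorics) implies uniform-on-compacts convergence of the corresponding Teichm\"uller geodesics; extracting this from the hierarchy / thick-triangles technology is the technical heart of the argument, and is precisely where we use that $\tgeod$ is itself cobounded rather than merely well-defined.
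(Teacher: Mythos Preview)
Your outline is in the right spirit but takes a more circuitous route than the paper, and leaves two genuine loose ends.

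The paper does not argue by contradiction and never passes through $\EL(S)$ or Rafi's thick-Teichm\"uller machinery. Instead it reuses the co-compact $\Mod(S)$-space $\Gamma$ of triples $(\geod,\mu^+,\mu^-)$ built in the proof of Theorem~\ref{bg implies bc}, where $\mu^\pm$ are \emph{measured} laminations of length $1$ at $\geod(0)$. Since $\mu^+,\mu^-$ bind, the Gardiner--Masur map $(\mu^+,\mu^-)\mapsto X(\mu^+,\mu^-)$ produces a point $\tgeod(0)$ on the associated Teichm\"uller geodesic, and the map $(\geod,\mu^+,\mu^-)\mapsto(\geod(0),\tgeod(0))$ is continuous and $\Mod(S)$-equivariant on a co-compact set, giving $d_T(\geod(0),\tgeod(0))\le M$ directly. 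The bound at an arbitrary parameter $t$ then follows from the simple reparametrization $\geod_t(s)=\geod(s+t)$, after rescaling $\mu^\pm$ to have length $1$ at the new basepoint (which leaves $\tgeod$ unchanged as a set). This avoids entirely the continuity-of-Teichm\"uller-geodesics-in-ending-laminations step that you correctly identify as your main obstacle: by keeping the transverse measures around, the paper invokes only continuity of the Gardiner--Masur map on $\ML\times\ML$, which is standard.

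Two gaps in your proposal. First, you only bound $d_T(\geod(t),\tgeod)$; the Hausdorff statement also requires $\tgeod\subset\NN_D(\geod)$, which the paper obtains by a separate short argument (integer points on $\geod$ map to points on $\tgeod$ with bounded gaps, so the Teichm\"uller segments between them stay near $\geod$). Second, Proposition~\ref{proposition:continuity} is stated for rays based at a fixed point and yields only $i(\mu,\mu')=0$ for subsequential limits; upgrading this to convergence $\lambda^\pm(\geod_n)\to\lambda^\pm(\geod_\infty)$ in the Gromov boundary, and then to uniform-on-compacts convergence of the associated Teichm\"uller geodesics, forces you in the end to choose and control measures on the $\lambda^\pm$---which is precisely the $\Gamma$-construction the paper uses from the start. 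Your detour through Theorem~\ref{bg implies bc} and Rafi's work is unnecessary for this argument.
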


\begin{proof}[Proof of Theorem~\ref{bg implies bc}]
Let $\G_\KK$ be the set of $\KK$-cobounded Weil-Petersson geodesics
which contain 0 in their parameter interval. Note that each end
of such a geodesic is either infinitely long, with ending lamination $\lambda^+$ or
$\lambda^-$, or closed, terminating in a point of $\Teich(S)$ with
Bers marking $\nu^+$ or $\nu^-$ (that is, there are no pinching
curves). In the latter case we may consider any of the (finitely many)
Bers pants decompositions of the endpoint. 

We will now follow a compactness argument of Mosher \cite{Mosher:elc}
to establish a bound on the combinatorics asscoiated to its Bers
markings or ending laminations.

Consider the subset
$$\Gamma\subset \G_\KK\times\ML\times\ML$$ consisting of triples
$(\geod,\mu^+,\mu^-)$ such that 
\begin{itemize}
\item $\mu^+$ is a measure on the lamination
$\lambda^+$ if the forward end $\geod_+$ is infinite, and on a Bers pants decomposition
  if it is finite; and similarly for $\mu^-$ and $\geod_-$. 
\item $\mu^\pm$  have length $1$ with respect to the hyperbolic structure on 
$\geod(0)$.
\end{itemize}
We let $\Gamma$ inherit the product topology, where we put Thurston's
topology on $\ML$ and give $\G_\KK$ the topology of convergence of
parameter intervals together with uniform convergence on compact
subsets.

We first show that the action of $\Mod(S)$ on $\Gamma$ is co-compact.
Let $\KK_0$ be a compact fundamental domain for the action of
$\Mod(S)$ on the preimage of $\KK$ in $\Teich(S)$, and let
$\G_{\KK_0}$ be the set of $\geod\in\G_\KK$ with
$\geod(0)\in\KK_0$. Clearly $\G_{\KK_0}$ is compact, and every point
in $\Gamma$ can be moved by $\Mod(S)$ into $\Gamma_0 =
\Gamma\intersect\G_{\KK_0}\times\ML\times\ML$.

Let $(\geod_n,\mu^+_n,\mu^-_n)\in\Gamma_0$ be a sequence such that
$\geod_n\to\geod\in\G_{\KK_0}$. Since $\mu^\pm_n$ have length $1$ at
$\geod_n(0)$ and $\geod_n(0)\to\geod(0)$, we may conclude that, after
restricting to a subsequence, $\mu^\pm_n$ converge in $\ML$ to
$\mu^\pm$ with length 1 at $\geod(0)$.

If $\geod_+$ is finite then $\mu^+$ is a measure on a
Bers pants decomposition for the endpoint, and similarly for the
backward end $\geod_-$.

If $\geod_+$ is infinite we must show that $\mu^+$ is a measure on the
ending lamination $\lambda^+$. We claim that the length of $\mu^+_n$
is uniformly bounded on $\geod_n(t)$ for $t\ge 0$. If $\geod_n$ is
finite in the forward direction this is a consequence of convexity of
the length function. If not, then since $\geod_n$ is recurrent (being
cobounded), we can apply Lemma 4.5 of \cite{\BMMI}, ensuring that a
measured lamination has bounded length along a recurrent ray if and
only if its support is the ending lamination. Hence $\mu^+_n$ is
bounded along $\geod_n$ in the forward direction, and by convexity it
is bounded by 1 for $t\ge 0$. It follows in the limit that $\mu^+$ has
bounded length along $\geod_+$, and hence (again by Lemma 4.5 of
\cite{\BMMI}) its support is its ending lamination. The same applies
to $\mu^-$ and $\geod_-$, and so we conclude that
$(\geod,\mu^+,\mu^-)\in\Gamma_0$.  This proves that $\Gamma$ is
co-compact.

Now consider the set of quadruples $(\geod,\mu^+,\mu^-,W)$ where
$(\geod,\mu^+,\mu^-)\in \Gamma$ and $W$ is a proper essential
subsurface.  Let us first consider the case of bi-infinite geodesics:
The length $\ell_{\geod,\boundary W}(t)$ is a proper convex function
of $t$ and hence has a unique minimum. After reparameterizing and
rescaling the $\mu^\pm$ we may assume that $\boundary W$ has minimum
length at $\geod(0)$.

Suppose that our desired bound on $d_W(\mu^+,\mu^-)$ fails and there
is a sequence $(\geod_n,\mu^+_n,\mu^-_n,W_n)$, normalized in this way,
such that $d_{W_n}(\mu^+_n,\mu^-_n)\to\infty$. We will find a
contradiction. The cocompactness of $\Gamma$ tells us that, after
acting by $\Mod(S) $ and restricting to a subsequence, we can assume
that $(\geod_n,\mu^+_n,\mu^-_n)\to(\geod,\mu^+,\mu^-)$ in $\Gamma$,
which must still be bi-infinite.  We may also assume that $\{\boundary
W_n\}$ converges in $\PML(S)$, to a projectivized measured lamination
represented by $\sigma\in\ML(S)$. Continuity of length on
$\Teich(S)\times\ML(S)$ and convexity in the limit, implies that
$\ell_{\geod,\sigma}(t)$ still has a minimum at $t=0$. Hence $\sigma$
cannot have support equal to either $\mu^+$ or $\mu^-$, since Lemma
4.5 of \cite{\BMMI} ensures that a measured lamination can only be
supported on the ending lamination of a recurrent ray if its length
goes to $0$ along the ray.

Since $\mu^+$ and $\mu^-$ are filling and minimal, it follows that
they intersect $\sigma$ transversely, and that $\sigma$ cuts the
leaves of $\mu^\pm$ into segments whose lengths admit some upper
bound.  A limit of laminations in $\ML(S)$ always has support
contained in any Hausdorff limit of supports of its approximates.

Now first assume that $W_n$ are {\em not} annuli. It follows that in
the sequence $\mu^\pm_n$ are cut up by $\boundary W_n$ into pieces of
bounded length. Therefore any two of these pieces intersect a bounded
number of times (usually 0), and this bounds
$d_{W_n}(\mu^+_n,\mu^-_n)$, a contradiction.

Now assume that $W_n$ are annuli. Since $\mu^+, \mu^-$ intersect
$\sigma$ transversely, their approximates $\mu^\pm_n$ make a definite
angle with the approximates $W_n$ of $\sigma$.  There is a lower bound
on the length of the geodesic representing $W_n$.  It follows that any
lift of a leaf of $\mu_n^+$ to the annular cover corresponding to a
component of $W_n$ has intersection bounded above with any other leaf
that crosses $W_n$.  This again gives a contradiction to the
assumption that the projections go to infinity.

When $\geod$ has endpoints, the minimum of $\ell_{\geod,\boundary W}$
can occur at the endpoint, and the same can occur in the
limit. However since the geodesics are co-compact, the minimum is
bounded away from $0$.  The same argument still shows that, in the
limit, $\sigma$ cannot be supported on an ending lamination of an
infinite end. A marking intersects {\em every} lamination (the
components of the marking base are intersected by the transversals),
so the same contradiction can be obtained.
\end{proof}

\begin{proof}[Proof of Theorem \ref{bg near teich}]


  By \cite{Gardiner:Masur:Extremal}, for any pair $(F_h,F_v)$ of
  measured laminations that bind $S$ there is a unique surface
  $X=X(F_h,F_v)\in\Teich(S)$ and quadratic differential
  $q=q(F_h,F_v)$, holomorphic on $X$, whose horizontal and vertical
  measured foliations are equivalent to $F_h$ and $F_v$ respectively
  (via the usual equivalence between measured foliations and
  laminations). The family $X(t) = X(e^tF_h,e^{-t} F_v)$ is a
  Teichm\"uller geodesic parameterized by arclength (and all
  Teichm\"uller geodesics are obtained this way). Note actually that
  $X(kF_h,kF_v) = X(F_h,F_v)$ for any $k>0$, since the two
  constructions differ only by a conformal factor. Hence {\em any }
  two multiples of $F_h$ and $F_v$ yield points on the same
  Teichm\"uller geodesic.

  For each $(\geod,\mu^+,\mu^-)\in \Gamma$, the laminations $\mu^+$
  and $\mu^-$ bind the surface (by Corollary~4.6 of \cite{\BMMI}) so
  we can therefore associate the (parameterized) Teichm\"uller
  geodesic
$$\tgeod(t) = X(e^t\mu^+,e^{-t}\mu^-).$$ 

 

Now we wish to prove that $d_T(\geod(t),\tgeod)$ is bounded 
(uniformly on $\Gamma$).  The map that assigns to each
$(\geod,\mu^+,\mu^-)\in\Gamma$ the point $(\geod(0), \tgeod(0))\in
\Teich(S)\times \Teich(S)$ is $\Mod(S)$-equivariant and continuous on the
co-compact set $\Gamma$. Thus for some $M$ we have
$$d_T(\geod(0),\tgeod(0))\leq M$$ 
for all points in $\Gamma$.  Now let $t$ be any parameter value in the
domain of $\geod$ and define the geodesic $\geod_t$ by
$\geod_t(s)=\geod(s+t)$, so that $\geod_t(0)=\geod(t)$.  Let
$\mu_t^+,\mu_t^-$ be the multiples of $\mu^+,\mu^-$ that have length
$1$ on $\geod_t(0)$; then we have
$(\geod_t,\mu^+_t,\mu^-_t)\in\Gamma$.  The corresponding Teichm\"uller
geodesic $\tgeod_t$ satisfies $\tgeod_t(0)=\tgeod(s)$ for some $s$.
Then the above says
that we have  $$d_T(\geod(t),\tgeod(s))=d_T(\geod_t(0),\tgeod_t(0))\leq M.$$

This shows that $\geod$ lies in an $M$-neighborhood of $\tgeod$, for all
$(\geod,\mu^+,\mu^-)\in\Gamma$. It remains to obtain a bound in the
other direction. Given $\geod$ with parameter interval $J$, for each
integer point $n\in J\intersect\Z$ let $s_n$ be a point in the domain
of $\tgeod$ such that $d_T(\geod(n),\tgeod(s_n)) \le M$. Since $\geod$
is $\KK$-cobounded, the {\em Teichm\"uller} distance
$d_T(\geod(n),\geod(n+1))$ is bounded by some $M'$. Hence there is a
uniform upper bound on $d_T(\tgeod(s_n),\tgeod(s_{n+1}))$, and so
$\tgeod([s_n,s_{n+1}])$ lies in a uniform neighborhood of $\geod$,
guaranteeing that $\tgeod$ lies in a uniform 
neighborhood of $\geod$ in the Teichm\"uller metric.
\end{proof}



\section{Bounded combinatorics implies bounded geometry}
\label{BC implies BG}
In this section we will prove the converse to Corollary \ref{bg
  implies bc}, namely that a Weil-Petersson geodesic segment, ray, or
line whose end-invariants have bounded combinatorics must have bounded
geometry:

\begin{theorem}{bc implies bg}
  Given $K>0$ and a compact $\KK_0 \subset \MM(S)$, there is a
  compact $\KK\subset \MM(S)$ such that the following holds: Let
  $\geod$ be a geodesic segment ray or line with finite endpoints, if
  any,   projecting to $\KK_0$ and ending data $\nu^{\pm}$. If
\begin{equation}\label{bd combinatorics}
d_W(\nu^+,\nu^-) \le K
\end{equation}
for all proper essential subsurfaces $W$, then $\geod$ is $\KK$-cobounded.
\end{theorem}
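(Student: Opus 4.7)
The plan is to argue by contradiction, translating the Weil--Petersson geodesic into the pants graph $P(S)$ where the hierarchy-path machinery applies, invoking the stability Theorem~\ref{i bounded} to control the pants-graph image, and then converting any hypothetical pinching curve into a large Dehn twist at the level of the end invariants via Wolpert's limit analysis (Theorem~\ref{geodesic limits}).

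Suppose for contradiction that no such $\KK$ exists. Then for the given $K$ and $\KK_0$ one can find a sequence $\geod_n$ of Weil--Petersson geodesic segments, rays, or lines whose finite endpoints (if any) project to $\KK_0$, whose end invariants $\nu^\pm_n$ satisfy $d_W(\nu^+_n,\nu^-_n)\le K$ for every proper essential subsurface $W$, but with points $X_n=\geod_n(t_n)$ at which some simple closed curve $\alpha_n$ has $\ell_{\alpha_n}(X_n)\to 0$. Applying the quasi-isometry $Q\colon\Teich(S)\to P(S)$ of Theorem~\ref{theorem:pants:quasi}, the composition $F_n=Q\circ\geod_n$ is a uniform quasi-geodesic in $P(S)$ whose endpoints lie within bounded pants-distance of Bers pants decompositions for $\nu^\pm_n$. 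The non-annular portion of (\ref{bd combinatorics}) gives uniform non-annular bounded combinatorics for these endpoints, so Theorem~\ref{i bounded} supplies a hierarchy path $\rho_n$ in $P(S)$ which fellow travels $F_n$ with uniform constants. Since $\alpha_n$ is arbitrarily short at $X_n$, it belongs to some Bers pants decomposition of $X_n$; by the quasi-isometry and fellow-traveling it then appears, up to bounded $\calC(S)$-distance, as a vertex of some pants decomposition along $\rho_n$.

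The heart of the argument, and the main obstacle, is to convert this geometric pinching into a combinatorial twist bound, namely $d_{\alpha_n}(\nu^+_n,\nu^-_n)\to\infty$, which would contradict the annular case of the hypothesis. I would extract a limit: after normalizing by the $\Mod(S)$ action so that a suitable basepoint on $\geod_n$ remains in a compact set of augmented Teichm\"uller space, pass to a limiting Weil--Petersson geodesic whose image meets the boundary stratum on which the limit $\alpha_\infty$ of (a conjugate of) $\alpha_n$ is pinched. Wolpert's Theorem~\ref{geodesic limits} on the behaviour of geodesics approaching such strata, combined with strict convexity of $\ell_{\alpha_n}$ along $\geod_n$ (which forces the geodesic to spend substantial time in the thin part around $\alpha_n$), is then used to extract a definite amount of Fenchel--Nielsen twisting around $\alpha_n$. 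Translating this Fenchel--Nielsen twisting into the coarse count $d_{\alpha_n}$ of the annular projection of $\nu^\pm_n$ gives the required contradiction, since arbitrarily small $\ell_{\alpha_n}(X_n)$ produces arbitrarily large accumulated twist.

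Once all curve lengths along $\geod$ are bounded below by some $\epsilon>0$ depending only on $K$ and $\KK_0$, Mumford's compactness criterion produces a compact set $\KK\subset\MM(S)$ containing the projection of $\geod$, completing the proof. I expect the principal difficulty to lie in the twist-extraction step: hierarchy paths record annular information only coarsely, while the WP geodesic can oscillate toward a stratum with very fine geometric structure, so Wolpert's limit theorem must be applied with care to pin down how much relative twist the end invariants $\nu^\pm$ are forced to record. The earlier steps, by contrast, are essentially an assembly of the stability Theorem~\ref{i bounded} together with the quasi-isometry $Q$ and the distance formula (Theorem~\ref{hierarchy paths}\,(\ref{dist formula})).
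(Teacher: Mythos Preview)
Your overall strategy matches the paper's, but there is a structural gap: you assume that every instance of pinching can be converted into large annular twisting via Wolpert's theorem, and this is not so. Wolpert's Theorem~\ref{geodesic limits} applies only to geodesic segments of \emph{uniformly bounded length}. If the interval on which $\ell_{\alpha_n}$ stays below $\epsilon_0$ has length $L_n\to\infty$, you cannot invoke it. The paper therefore splits into two cases according to whether $L_n$ is bounded. In the unbounded case the geodesic travels a long distance essentially parallel to the stratum $\SSS_{\alpha_n}$, so its projection to $\Teich(S\setminus\alpha_n)$ covers large pants-graph distance; the distance formula then forces some \emph{non-annular} subsurface $X_n\subset S\setminus\alpha_n$ to have $d_{X_n}(\nu^+_n,\nu^-_n)\to\infty$, contradicting the non-annular part of the hypothesis. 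Only in the bounded-$L_n$ case does the annular argument you outline apply.

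Two further points. First, even when Wolpert's theorem does apply, it produces large twist only between the endpoints of a bounded subsegment $I_n$ of $\geod_n$, not between the actual endpoints; you must still argue (using the hierarchy path and the Lipschitz projection to $\AAA_{\gamma_n}$) that no compensating twist accumulates outside $I_n$. The paper does this by arranging $I_n$ long enough that the main geodesic of $\rho_n$ stays far from $\gamma_n$ outside $I_n$ in $\CC(S)$, so the $\AAA_{\gamma_n}$-projection is nearly constant there. Second, your treatment of rays and lines is incomplete: Theorem~\ref{i bounded} as stated is for finite quasi-geodesics, and the end invariants of infinite ends are laminations rather than markings. The paper handles this by first proving the finite-segment case and then approximating rays and lines by segments with endpoints on a hierarchy path asymptotic to the ending laminations, using recurrence (Theorem~\ref{relc}) and the visibility property to identify the limit with the original geodesic.
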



We will prove this first in the case that $\geod$ is a finite segment,
and in \S\ref{infinite case} generalize to rays and lines.  The first
step, in \S\ref{stability}, is to use a ``stability of
quasigeodesics'' argument to argue that the geodesic $\geod$ must
remain within a bounded Weil-Petersson distance of a path arising from
a hierarchy path in $P(S)$ connecting its endpoints. Indeed this will
hold not just with the general bound on projections but with the
weaker assumption of that only the non-annular projections are
bounded. Theorem \ref{i bounded} will give the combinatorial version
of this stability statement.

In \S\ref{coboundedness}-\ref{proof twist big2} we deduce the full
strength of Theorem~\ref{bc implies bg}, which in view of Theorem
\ref{i bounded} corresponds to showing that the geodesic stays away
from the strata of the completion that are combinatorially close to
the hierarchy path.  A result of Wolpert (Theorem \ref{geodesic
  limits}) will be used in \S\ref{proof twist big2} to show that under
these circumstances, close approaches to these strata force the
buildup of Dehn twists in certain curves, which (together with the
information from Theorem \ref{i bounded}) will contradict the bound on
annular projections.


\subsection{Projections to hierarchies and stability}
\label{stability}

If $Q$ and $Q' \in P(S)$ are pants decompositions, we let $\rho =
\rho(Q,Q')$ denote a hierarchy path $\rho \colon [0,n] \to P(S)$, as
in Theorem \ref{hierarchy paths}, with $\rho(0)=Q$ and $\rho(n)=Q'$.
The choice of $\rho$ is not unique, but we will be satisfied with
making an arbitrary one.

Let $|\rho| \subset P(S)$ denote the union 
$$
|\rho| = \cup_{i=0}^n \rho(i),
$$
namely the image of the hierarchy path in $P(S)$.  

We will at times consider pants decompositions $P$ as maximal
simplices in $\calC(S)$.  In particular, as with proper subsurfaces,
we will employ the notation
$$d_{S} (P,P') = \diam (\pi_S(P) \cup \pi_S(P'))$$
where
$$\pi_S \colon P(S) \to \calC(S)$$
denotes the projection of $P(S)$ into $\calC(S)$ that associates to a
pants decomposition $P$ the maximal simplex in $\calC(S)$ determined
by its simple closed curves, and the diameter is taken in the metric
on $\calC(S)$.

The following Lemma shows that under the bounded combinatorics
assumption, the mapping $\pi_S \circ \rho$ determines a quasi-geodesic in
$\calC(S)$.
\begin{lemma}{G properties}
Given $K >0$, let $\rho$ be a hierarchy path satisfying the
non-annular $K$-bounded combinatorics condition,  namely
\begin{equation}\label{bc H}
  d_W(\rho(0),\rho(n)) \le K.
\end{equation}
for all proper non-annular essential subsurfaces $W\subset S$.
Then there is a $c_1' >1$ 
so that
\begin{equation}\label{G bounds}
  \frac{1}{c_1'}\le \frac{d_{S}(\rho(i),\rho(j))}{d_P(\rho(i),\rho(j)}\le c_1'
\end{equation}
for $i\ne j$.
\end{lemma}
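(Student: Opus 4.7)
The plan is to apply the Distance Formula (part~\ref{dist formula} of Theorem~\ref{hierarchy paths}) after using the non-annular bounded combinatorics assumption to collapse the sum to a single summand corresponding to $S$ itself.

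First I would deduce from the hypothesis (\ref{bc H}), together with the triangle inequality and part~\ref{bded image} of Theorem~\ref{hierarchy paths} exactly as in the estimate~(\ref{bded hierarchy}), that
$$d_W(\rho(i),\rho(j)) \le K + 2M_2$$
for every proper non-annular $W \subsetneq S$ and every $i,j \in [0,n]$. Choose a threshold $M_3$ with $M_3 \ge M_1$ and $M_3 > K + 2M_2$. Then $[[d_W(\rho(i),\rho(j))]]_{M_3} = 0$ for every proper non-annular $W$, so the sum in the Distance Formula reduces to the single term indexed by $S$, giving
$$d_P(\rho(i),\rho(j)) \asymp_{c_1,c_2} [[d_S(\rho(i),\rho(j))]]_{M_3}$$
for constants $c_1,c_2$ depending only on $M_3$, hence ultimately on $K$.

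For the upper half of (\ref{G bounds}), I would argue directly: consecutive pants decompositions $\rho(k)$ and $\rho(k{+}1)$ differ by an elementary move, so they share a curve $\delta_k$. Since $\rho(k)\cup\rho(k{+}1)$ lies in a subset of $\CC(S)$ in which every vertex is joined to $\delta_k$ by one edge, we get $d_S(\rho(k),\rho(k{+}1)) \le 2$. Iterating through intermediate pants decompositions yields $d_S(\rho(i),\rho(j)) \le 2|j-i|$, and part~\ref{quasigeodesic} of Theorem~\ref{hierarchy paths} gives $|j-i| \le K_H\,d_P(\rho(i),\rho(j))$, so $d_S \le 2K_H\,d_P$.

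For the lower half of (\ref{G bounds}), unpack $\asymp_{c_1,c_2}$ in two cases. If $d_S(\rho(i),\rho(j)) \ge M_3$, then $d_P \le c_1 d_S + c_2 \le (c_1 + c_2/M_3)\,d_S$. If instead $d_S(\rho(i),\rho(j)) < M_3$, the truncation kills the right-hand side of the distance formula, forcing $d_P \le c_2$; but for $S$ of complexity at least two the $\CC(S)$-diameter of any pants decomposition is already at least $1$, so $d_S \ge 1$ and therefore $d_P/d_S \le c_2$. Taking $c_1'$ to be the largest constant that appears in the two halves of the argument completes the bound. No single step is hard once $M_3$ has been selected; the only care needed is in the small-$d_S$ regime, where the trivial lower bound $d_S \ge 1$ is what converts the additive distance-formula error into the purely multiplicative form of~(\ref{G bounds}).
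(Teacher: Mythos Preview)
Your argument is correct and follows essentially the same route as the paper: use part~\ref{bded image} to propagate the non-annular bound to all pairs $\rho(i),\rho(j)$, choose a threshold $M_3$ large enough to kill every proper-subsurface term in the distance formula, and then convert the resulting additive-multiplicative comparison $d_P \asymp_{c_1,c_2} [[d_S]]_{M_3}$ into a purely multiplicative one using the positivity of $d_S$ and $d_P$. One small slip: when the truncation vanishes, the distance formula gives $d_P/c_1 - c_2 \le 0$, hence $d_P \le c_1 c_2$ rather than $d_P \le c_2$; this is harmless for the conclusion.
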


\begin{proof}

  Recall the consequence~(\ref{bded hierarchy}) of
  Definition~\ref{hierarchy paths},
that we have for any
  $i$ and $j$ the bound
$$
d_W(\rho(i),\rho(j)) \le K + 2M_2 
$$
for any non-annular proper subsurface $W$.  

Taking $M_3 > K + M_1+ 2M_2 +2$, the distance formula
(Theorem~\ref{hierarchy paths}, part (\ref{dist formula})) guarantees there are $c_1$ and $c_2$
depending on $M_3$ so that we have the estimate
\begin{equation}\label{bc dist formula}
d_P(\rho(i),\rho(j)) \asymp_{c_1,c_2}
\Tsum_{V} [[d_V(\rho(i),\rho(j))]]_{M_3},
\end{equation}
where we recall the sum is over non-annular subsurfaces.
But all the terms in the sum 
other than the $V = S$ term
are beneath the threshold $M_3$, so we have 
\begin{equation}
d_P(\rho(i), \rho(j)) 
\asymp_{c_1,c_2} d_{S} (\rho(i),\rho(j)).
\end{equation}
Since $\rho(i)$ and $\rho(j)$ are pants decompositions, the 
term $d_{S}(\rho(i),\rho(j))$ is always positive.
By Theorem~\ref{hierarchy paths}, part~(\ref{quasigeodesic}),
$d_P(\rho(i),\rho(j))$ is always positive when $i \not= j$
so in fact there is a  $c_1'$ so that 
~(\ref{G bounds}) holds. 

\end{proof}

We now define a ``projection'' 
\begin{equation}\label{piH}
  \pi_\rho : P(S) \to |\rho|
\end{equation}
as follows: Given $P$ in $P(S)$ let $\beta$ be any choice of vertex
of $P$.  Let $v=\pi_m(\beta)$ be any closest point to $\beta$ in $m$, 
with respect to the metric of $\CC(S)$, where $m$ is the main geodesic
of the hierarchy path $\rho$,
and then let $\pi_\rho(P)$ be any choice of $\rho(i)$ for which
$\rho(i)$ contains $v$.
The $\delta$-hyperbolicity of $\CC(S)$ implies that $v$ is well-defined up to
uniformly bounded ambiguity, and if $\rho$ satisfies the non-annular
bound (\ref{bc H}) then
(\ref{G bounds}) implies that $\rho(i)$ is defined up to bounded
ambiguity as well.  

When $\rho$ satisfies (\ref{bc H}) we will prove that $\pi_\rho$ is
``coarsely contracting'' in the following  sense: 
\begin{theorem}{bg projection} 
Given $K$ there exist $N$, $R_0$, and $C>0$ such that if
 $\rho=\rho(Q_+,Q_-)$ is a hierarchy path satisfying
 the non-annular bounded combinatorics
 property (\ref{bc H}),  then the projection $\pi_\rho$ satisfies
\begin{enumerate}
\item For $P\in |\rho|$, $d_P(P,\pi_\rho (P))\leq N$ 
\item If $d_P(P_0,P_1)\leq 1$, then $d_P(\pi_\rho (P_0),\pi_\rho (P_1))\leq N$ 
\item If $d_P(P,|\rho|) = R \geq R_0$, then
$$
\diam\left( \pi_\rho(\NN_{R/C}(P)) \right) \le N.
$$
\end{enumerate}
Here distances and diameters are all taken in $P(S)$, and $\NN_r$
denotes a neighborhood of radius $r$ in $P(S)$. 
\end{theorem}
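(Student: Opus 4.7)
The three assertions will follow from combining Lemma~\ref{G properties}, which tells us that $\pi_S\circ\rho$ is a $c'_1$-quasi-geodesic in $\CC(S)$ and hence (by the Morse lemma) has image at bounded Hausdorff distance from the main geodesic $m=g_S$, with the standard coarse Lipschitz and coarse contraction properties of closest-point projection to $m$ in the $\delta$-hyperbolic space $\CC(S)$. A preliminary input is that under the non-annular bounded combinatorics hypothesis each window $J_v=\{j:v\in\rho(j)\}$, $v\in m$, has uniformly bounded $P(S)$-diameter. Indeed, $\rho|_{J_v}$ is a subhierarchy whose endpoints both contain $v$, so its $S$-projection diameter is at most $2$, while by (\ref{bded hierarchy}) every non-annular proper subsurface $Y$ satisfies $d_Y(\rho(j_-),\rho(j_+))\le K+2M_2$. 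Choosing $M_3>K+2M_2$ in the distance formula (Theorem~\ref{hierarchy paths}(\ref{dist formula})) kills all such terms, leaving a uniform bound $|J_v|\le N_0(K)$. Combined with the fact that any two vertices of a single pants decomposition are at $d_\calC$-distance at most $1$ (so that their closest-point projections to $m$ agree up to $1+2\delta$), this shows $\pi_\rho$ is well-defined up to bounded $P(S)$-ambiguity.

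\textbf{Items 1 and 2.} For Item~1, if $P=\rho(i)$ then part~(3) of Theorem~\ref{hierarchy paths} provides a main-geodesic vertex $v_i\in m\cap \rho(i)$ at $d_\calC$-distance $\le 1$ from any chosen $\beta\in P$; coarse Lipschitzness of $\pi_m$ then places $\pi_m(\beta)$ within uniformly bounded $d_\calC$-distance of $v_i$, and the bounded window diameter gives $d_P(P,\pi_\rho(P))\le N$. For Item~2, an elementary move between $P_0$ and $P_1$ yields vertices $\beta_0,\beta_1$ with $d_\calC(\beta_0,\beta_1)\le 2$, so coarse Lipschitzness of $\pi_m$ and Lemma~\ref{G properties} convert this to a bounded $d_P$-distance between $\pi_\rho(P_0)$ and $\pi_\rho(P_1)$.

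\textbf{Item 3, the crux.} Here we invoke the coarse contraction for closest-point projection to a geodesic in a $\delta$-hyperbolic space: there are constants $A,B$ such that if $d_\calC(\beta,m)\ge D\ge A$ and $d_\calC(\beta,\beta')\le D/A$ then $d_\calC(\pi_m(\beta),\pi_m(\beta'))\le B$. Each elementary move in $P(S)$ changes the $\calC(S)$-position of any vertex by at most $2$, so for suitable vertex choices $d_\calC(\beta,\beta')\le 2\,d_P(P,P')$; combined with Lemma~\ref{G properties}, which controls $d_P$ by $d_\calC$ along $|\rho|$, the contraction will yield Item~3 once we have a lower bound of the form $d_\calC(\beta,m)\gtrsim R$ for $R=d_P(P,|\rho|)$. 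Applying the distance formula between $P$ and any closest $\rho(j)\in|\rho|$ yields
\[
R\;\asymp\; d_\calC(\beta,m)\;+\;\Tsum_{Y\subsetneq S}\,[[d_Y(P,\rho(j))]]_{M_3},
\]
where the sum is over non-annular $Y$ and, by (\ref{bded hierarchy}), the non-$S$ terms are essentially independent of the choice of $j\in[0,n]$. When the $\calC(S)$-term dominates, the contraction applies with $C$ chosen so that $R/C\le d_\calC(\beta,m)/(2A)$, bounding $\diam\pi_\rho(\NN_{R/C}(P))$ by $B$ in $\calC(S)$ and hence by $N$ in $P(S)$.

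\textbf{Main obstacle.} The residual case, in which $R$ is dominated by the proper-subsurface terms while $d_\calC(\beta,m)$ remains small, is the principal difficulty: a priori $P$ can sit far from $|\rho|$ in $P(S)$ purely by virtue of a large projection to some proper non-annular $Y\subsetneq S$, even though all its vertices stay close to $m$ in $\calC(S)$. The plan is to handle this via the Bounded Geodesic Image Theorem of \cite{Masur:Minsky:CCII}: any such offending $Y$ must have $\partial Y$ tracking the $\calC(S)$-geodesic joining $\pi_S(P)$ to $\pi_S(\rho(j))$, and since $\pi_S(|\rho|)$ fellow-travels $m$ this forces $\partial Y$ near $m$. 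The curves of $P$ are then confined (up to coarse Lipschitzness of projection) to a uniformly bounded segment of $m$, a property that by (\ref{bded hierarchy}) and the coarse Lipschitz bound $d_Y(P,P')\le 4d_P(P,P')$ is preserved across all $P'\in\NN_{R/C}(P)$. The bounded window length then yields the required uniform bound on $\diam\pi_\rho(\NN_{R/C}(P))$.
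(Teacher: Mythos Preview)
Your treatment of Items 1 and 2, and of the ``easy'' half of Item 3 (when the $\CC(S)$-term $d_\calC(\beta,m)$ dominates the distance formula for $d_P(P_0,P_0')$), is correct and matches the paper's approach. The bounded-window observation is a nice way to make the coarse well-definedness of $\pi_\rho$ explicit.

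The gap is in your ``main obstacle'' case. Your plan is: locate an offending $Y$ with large $d_Y(P_0,P_0')$, use BGIT to place $\partial Y$ near $v_0=\pi_m(\beta_0)$, and then assert that this confinement ``is preserved across all $P'\in\NN_{R/C}(P)$'' by the Lipschitz bound $d_Y(P,P')\le 4d_P(P,P')$. But this last step is circular: what you need to conclude is precisely that $\pi_m(\beta_1)$ stays near $\pi_m(\beta_0)$, and nothing you have written forces that. Knowing $\partial Y$ is near $v_0$ and that $d_Y(P_0,P_1)\le 4R/C$ does not, by itself, pin down $v_1=\pi_m(\beta_1)$. Moreover, a single $Y$ need not carry a projection comparable to $R$; the contribution may be spread over many subsurfaces, each with $d_Y(P_0,P_0')$ only slightly above threshold, so after subtracting $4R/C$ the corresponding $d_Y(P_1,\cdot)$ may drop below threshold and you lose control.

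The paper handles this case by proving the contrapositive quantitatively: it introduces the auxiliary hierarchy paths $\rho_i=\rho(P_i,P_i')$ with main geodesics $m_i$ (each running from $\beta_i$ to $v_i\in m$), and shows that if $d_S(v_0,v_1)$ exceeds some $B$ then for \emph{every} proper non-annular $V$ one has $d_V(P_0,P_1)\ge d_V(P_0,P_0')-M_3$. The mechanism is that any $V$ contributing above threshold to $d_P(P_0,P_0')$ is a component domain of $\rho_0$, so $\partial V$ lies within distance $1$ of $m_0$; if $V$ were also a component domain of $\rho_1$, then $\partial V$ would lie near $m_1$ as well, and since $m_i$ is a geodesic from $\beta_i$ to its nearest point $v_i$ on $m$, the entire $m_i$ projects under $\pi_m$ to a bounded neighborhood of $v_i$---forcing $v_0,v_1$ close. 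Hence $V$ is not a component domain of $\rho_1$, so $d_V(P_1,P_1')\le M_1$; combined with $d_V(P_0',P_1')\le K+2M_2$ along $\rho$, the triangle inequality gives the displayed lower bound. Summing over all $V$ then yields $d_P(P_0,P_1)\gtrsim R$, contradicting $d_P(P_0,P_1)\le R/C$ for suitable $C$. This transfer of \emph{all} subsurface contributions from $(P_0,P_0')$ to $(P_0,P_1)$ is the missing idea in your sketch.
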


{\em Remark:} the result also holds in the full marking graph if we
require the bound (\ref{bc H}) for annular surfaces as well.

\begin{proof}
For Conclusion (1), we note that for $P\in |\rho|$, the diameter of
the closest point set to $P$ on $m$  has diameter at most $1$.  Then
$d_P(P,\pi_\rho(P)) \le c_1'$, by  
the lower bound in~(\ref{G bounds}). 

Conclusion (2) follows from $\delta$-hyperbolicity of $\calC(S)$,
together with an application of ~(\ref{G bounds}).
In particular, it is a standard property of $\delta$-hyperbolic spaces
that there is an $L_\delta$ depending only on the hyperbolicity
constant $\delta$ so that the nearest point projection to a
geodesic is $L_\delta$-Lipschitz. \marginpar{ref?}
If $d_P(P_0,P_1) \le 1$, then for any $\alpha \in P_0$ and $\beta \in
P_1$ we have 
\begin{equation} 
\label{Lipschitz CC}
d_S(\pi_m(\alpha),\pi_m(\beta)) < 2L_\delta.
\end{equation}
  It
follows that $d_P(\pi_\rho(P_0),\pi_\rho(P_1)) < c_1' 2 L_\delta$,
where $c_1'$ is the constant from~(\ref{G bounds}).

Let us prove (3). 

Begin with $P_0\in P(S)$ such that $d_P(P_0,\pi_\rho(P_0)) = R$, and 
consider a second pants decomposition $P_1$. 
Let $\rho' = \rho(P_0,P_1)$ be a hierarchy path from $P_0$ to $P_1$
and let $m'$ be the main geodesic of
$\rho'$. Let $P'_i=\pi_\rho(P_i)$. 
Let $v_i \in \pi_S(P_i')$
be vertices of $P'_i$ that lie on $m$ (see Figure \ref{project2}).

\realfig{project2}{The main geodesics of the hierarchies in
  Theorem~\ref{bg projection}.}{2in}

In view of (\ref{G bounds}), it suffices to prove that a bound of the 
form 
$d_{P}(P_0,P_1) < R/C$ implies a uniform bound on
$d_{S}(v_0,v_1)$ for $R$ at least some $R_0$.  
In particular, if there is a uniform bound 
$d_{S}(v_0,v_1) < B$ then we have
$$d_{S}(P'_0,P'_1) < B +2$$ and therefore by ~(\ref{G bounds})
\begin{equation}
\label{boundpants}
d_P(\pi_\rho(P_0), \pi_\rho(P_1)) = d_P(P'_0,P'_1)  < c_1'(B+2)
\end{equation}
which is the desired conclusion for (3).  We proceed to deduce this implication.

As in Lemma~\ref{G properties}, 
we take $M_3 = M_1 +  K + 2 M_2 + 2$
and let $c_1$, $c_2$, be constants supplied by the distance formula
of Theorem \ref{hierarchy paths} such that
  \begin{equation}\label{distance formula}
d_P(Q,Q') \asymp_{c_1,c_2}  \Tsum_W \left[[ d_W(Q,Q') \right]]_{M_3}
  \end{equation}
Let $c_3$ and $c_4$ be the constants determined by the distance
formula for the threshold constant $2M_3$.
%
Let $m_i$ be the main geodesic of the hierarchy path $\rho_i =
\rho(P_i,P'_i)$. 

By the distance formula, 
once $d_P(P_0,P_0') =R > 2c_3 c_4$, we have
$$\Tsum_V [[d_V(P_0,P_0')]]_{2M_3} \ge \frac{R}{c_3}-c_4\geq \frac{R}{2c_3}.$$
It follows that either 
\begin{equation}\label{m large}
d_S(P_0,P_0') = |m_0| > R/4c_3
\end{equation}
or
\begin{equation}
 \label{subsurfaces large}
 \Tsum_{V \subsetneq S} [[d_V(P_0,P'_0)]]_{2M_3} > R/4c_3.
\end{equation}
(The first corresponds to the $W=S$ term taking up at least half of  the
 sum in the distance formula~(\ref{distance formula}), 
while the second corresponds to the rest of the terms
 taking up at least half of the sum). 
 By hyperbolicity of $\CC(S)$, we have an $A_S$, $B_S$ and $C_S$
 depending only on the hyperbolicity constant $\delta$ for $\calC(S)$
 so that provided $|m_0| > A_S$ we have
$$
\diam \pi_m(\NN^\CC_{|m_0|/C_S}(P_0)) \le B_S
$$
where 
$\NN^\CC_r$ denotes an $r$-neighborhood with respect
to the $\CC(S)$-metric. 

Choose the constants $R_0$, $C$, and $N$ so that we have
$$R_0> 4c_3 A_S,\ \   C >
4 c_3 C_S,\ \ \text{and} \ \   N > c_1' B_S.$$

Suppose first that (\ref{m large}) holds. 
Since $$\pi_S(\NN_r(P_0)) \subset \NN^\CC_r(P_0),$$ we may conclude that
if  $R >R_0$ and 
$$
d_P(P_0,P_1)\le \frac{R}{C} < \frac{|m_0|}{C_S }
$$ 
then 
$$d_{S}(v_0,v_1) \le B_S.$$
This concludes the proof in this case. 

Now suppose that (\ref{m large})  does not hold, and thus 
(\ref{subsurfaces large}) holds.

We claim that for some $B$ just depending on $S$, 
if $d_{S}(v_0,v_1)\ge B$ then for any proper subsurface $V$ of $S$ we have
\begin{equation}\label{V between}
  d_V(P_0,P_1) \ge d_V(P_0,P'_0) - M_3.
\end{equation}


To see this, we observe that if $V$ is not a component domain of
$\rho_0$ then the claim clearly holds since $M_3 > M_1$ and
$d_V(P_0,P_0') < M_1$.  
Assume, then, that $V$ is a component
domain of $\rho_0$, so that $\bdry V$ has distance at most 1 from $m_0$.

As in (\ref{Lipschitz CC}), $\delta$-hyperbolicity of $\CC(S)$ guarantees
that if there is a point $v_0'$ of $m_0$ which is within distance $2$
of a point $v_1'$ of $m_1$ then we have $d_S(v_0,v_1) \le 2 L_\delta$,
since $v_0 = \pi_m(v_0')$ and  $v_1 = \pi_m(v_1')$.
Thus, if we choose $B>2 L_\delta$, then $V$ cannot be a component domain of
$\rho_1 = \rho(P_1,P'_1)$ because then $\boundary V$ would be distance
at most 1 from both $m_0$ and $m_1$.  
We conclude that
$d_V(P_1,P'_1)\leq M_1$.  Since $$d_V(P'_0,P'_1)\leq K + 2 M_2$$ by the
consequence~(\ref{bded hierarchy}) of $K$-bounded combinatorics, the
triangle inequality gives the claim (\ref{V between}) by the choice of $M_3$.


Summing, we have
\begin{eqnarray*}
\Tsum_{V\subsetneq S}
[[d_V(P_0,P_1)]]_{M_3} &\geq&  
\Tsum_{V \subsetneq S} \left( [[d_V(P_0,P'_0)]]_{2M_3} -M_3 \right) \\
&\ge& \frac{1}{2}
\Tsum_{V \subsetneq S} [[d_V(P_0,P_0')]]_{2M_3}
\end{eqnarray*} 
But then (\ref{subsurfaces large})  gives the bound
\begin{equation}
c_1 d_P(P_0,P_1) + c_2 \ge  \frac{R}{8c_3}.
\end{equation}
Set  $c_5=16 c_3 c_1$.  Then for $R > 16 c_3 c_2$,   we have
$$
d(P_0,P_1) \ge R/c_5
.$$

Choose the constants $C$, $R_0$ and $N$ so that 
 $$C > c_5 + 4c_3 C_S,$$
$$ R_0 > 16 c_3 c_2 + 2 c_3 c_4 + 4c_3 A_S$$ and   $$N >
\max\left\{c_1'(B+2), c_1'(2L_\delta + B_S)\right\}.$$ We have shown that for
$R>R_0$, if $$d_{P}(P_0,P_1) \le R/C,$$ then
$$d_{S}(v_0,v_1)\le B.$$ This completes  the proof. 

\end{proof}

Theorem \ref{bg projection} implies that a hierarchy path joining
points with uniformly bounded projection distances to all proper,
non-annular subsurfaces of $S$
produces a quasi-geodesic in $P(S)$, moreover a {\em stable} one.  
\begin{theorem}{i bounded}
For each $K$, $K_0$, there is a $D$ so that if $\rho = \rho(Q_+,Q_-)$
is a hierarchy path such that for some $K$ its endpoints satisfy
the non-annular $K$-bounded combinatorics condition and 
 if 
$$F:[0,T]\to P(S)$$
is a $K_0$-quasi-geodesic with $F(0)=Q_-$ and $F(T)=Q_+$, then we have
\begin{equation}\label{projection short}
d_{P}(F(t),\pi_\rho(F(t))) \le D.
\end{equation}
\end{theorem}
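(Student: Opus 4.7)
The plan is to deduce Theorem~\ref{i bounded} from Theorem~\ref{bg projection} by a Morse-lemma style argument. Property~(3) of Theorem~\ref{bg projection} asserts that $\pi_\rho$ is coarsely constant on balls whose radius scales linearly with the distance to $|\rho|$ — a \emph{strong} contraction property — and it is standard that such a projection forces quasi-geodesics with endpoints on $|\rho|$ to stay within bounded distance of $|\rho|$.

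Suppose for contradiction that $R := \sup_t d_P(F(t), |\rho|)$ is large compared to the constants $N, R_0, C$ of Theorem~\ref{bg projection} and to $K_0$, with the supremum essentially realized at some $t_0$. Since $F(0) = Q_-$ and $F(T) = Q_+$ lie on $|\rho|$, there exist outermost times $\tau_- < t_0 < \tau_+$ with $d_P(F(\tau_\pm), |\rho|) \le R_0$. We derive two incompatible estimates for $\Delta := d_P(\pi_\rho F(\tau_-), \pi_\rho F(\tau_+))$. For the \emph{upper bound}, partition $[\tau_-, \tau_+]$ into dyadic level sets $L_k := \{t : d_P(F(t), |\rho|) \in [R/2^{k+1}, R/2^k]\}$, $k = 0, \ldots, \lceil \log_2(R/R_0)\rceil$. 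On each $L_k$, Property~(3) at scale $r = R/2^{k+1}$ shows that $\pi_\rho$ varies by at most $N$ on any ball of radius $r/C$; the $K_0$-quasi-geodesic property of $F$ together with the fact that $R$ is the global maximum bounds the number of such balls needed to cover $F \cap L_k$, and summing over $k$ yields an estimate $\Delta \le C_1 R/R_0$ for a constant $C_1$ depending on $K_0, C, N$. For the \emph{lower bound}, the triangle inequality gives $d_P(F(\tau_\pm), F(t_0)) \ge R - R_0$, hence by quasi-geodesicity $|\tau_+ - \tau_-| \ge 2(R - R_0)/K_0 - 2$ and $d_P(F(\tau_-), F(\tau_+)) \ge 2R/K_0^2 - O(1)$; combined with $d_P(F(\tau_\pm), \pi_\rho F(\tau_\pm)) \le R_0 + O(N)$, this yields $\Delta \ge 2R/K_0^2 - O(R_0 + N)$. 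Provided $R_0$ is chosen large enough that $1/K_0^2 > C_1/R_0$ — which we may enforce by enlarging $R_0$ without changing $N$ or $C$, since Property~(3) still holds with any larger threshold — the two bounds force $R$ to be bounded by a constant $D$ depending only on $K$ and $K_0$, and Property~(1) then yields the required inequality $d_P(F(t), \pi_\rho F(t)) \le D$.

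The main technical obstacle is the dyadic upper-bound estimate. Since $t \mapsto d_P(F(t), |\rho|)$ need not be monotone along $F$, the path may enter and exit each level $L_k$ many times, and one must use crucially that $R$ is a \emph{global} supremum, together with careful triangle-inequality bookkeeping, to bound the cumulative $P$-length of $F$ in each annular region. Once this is done, the remainder is a direct synthesis of Theorem~\ref{bg projection} with the quasi-geodesic property of $F$.
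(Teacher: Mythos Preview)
Your approach is exactly the one the paper intends: it simply cites ``the usual Morse projection argument as in Mostow's rigidity theorem; see \cite[Lemma~6.2]{Masur:Minsky:CCI}'' and relies on the contraction properties of Theorem~\ref{bg projection}. So the strategy is right.

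There is, however, a genuine gap in your upper-bound step. You claim the dyadic decomposition yields $\Delta \le C_1 R/R_0$, but this requires bounding the total $P$-length of $F$ on $[\tau_-,\tau_+]$ by a constant times $R$, and no such bound is available: a $K_0$-quasi-geodesic can make an excursion of height $R$ while having $\tau_+ - \tau_-$ arbitrarily large compared to $R$ (its endpoints $F(\tau_\pm)$ lie near $|\rho|$, but their nearest points on $|\rho|$ may be far apart). The fact that $R$ is the global supremum does not help here, and the ``careful triangle-inequality bookkeeping'' you allude to cannot produce the needed length bound.

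The fix is to abandon the attempt to express both estimates in terms of $R$ and instead compare them as functions of the excursion length $\ell := \tau_+ - \tau_-$. Since $d_P(F(t),|\rho|) \ge R_0$ on the whole interval, cover $F([\tau_-,\tau_+])$ by balls of the single radius $R_0/C$; the number of balls is at most $CK_0\ell/R_0 + O(1)$, giving $\Delta \le NCK_0\,\ell/R_0 + O(1)$. Your lower bound already gives $\Delta \ge \ell/K_0 - O(NR_0)$ (note the error term is $O(NR_0)$, not $O(N)$, since passing from $F(\tau_\pm)$ to $\pi_\rho F(\tau_\pm)$ uses Property~(2) iterated $R_0$ times). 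Enlarging $R_0$ so that $1/K_0 > NCK_0/R_0$ then bounds $\ell$, and hence $R \le R_0 + K_0\ell + K_0$, by a constant depending only on $K,K_0$. No dyadic levels are needed.
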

That is, any quasi-geodesic in $P(S)$ with the same endpoints as
$\rho$ must lie within a bounded neighborhood of $|\rho|$, where the
bound depends on the quality of the quasi-geodesic.  This is proven
using the usual Morse projection argument as in Mostow's rigidity theorem;
see~\cite[Lemma 6.2]{Masur:Minsky:CCI}.


\subsection{Coboundedness}
\label{coboundedness}

With Theorem~\ref{i bounded} in hand, we return to the proof of
Theorem \ref{bc implies bg} in the finite case. Namely, we show that a
geodesic segment $\geod$ both of whose endpoints project to $\KK_0$,
with Bers markings $\nu_\pm(\geod)$ associated to its endpoints that
have bounded combinatorics, must be $\calK$-cobounded for a suitable
$\KK$.

Using just bounded combinatorics on non-annular subsurfaces, we apply
Theorem~\ref{i bounded} to show that the pants decompositions that
arise along $\geod$ uniformly fellow travel, with respect to the
Weil-Petersson metric, a hierarchy path joining Bers pants
decompositions for the endpoints of the segment.

To conclude that $\geod$ projects to a compact $\KK$, we will require
the bound on the annular projection distances $d_\gamma(\nu^+,\nu^-)$
as well.

Indeed, suppose there is a sequence of examples $\geod_n$ with
endpoints in $\KK_0$ and uniformly bounded combinatorics (condition
(\ref{bd combinatorics})), and a compact exhaustion
$\{\KK_n\subset\MM(S)\}$
for which $\geod_n$ exits $\til\KK_n$ (from now on we let $\til \KK$
and $\til \KK_n$ denote the preimages in $\Teich(S)$).
Let $\geod_n$ have endpoints $X_n^+$ and $X_n^-$.  Let $\nu_n^+ =
\nu(X_n^+)$ and $\nu_n^- = \nu(X_n^-)$ be the corresponding Bers markings
at the endpoints and let $Q_n^{\pm}=\text{base}(\nu_n^{\pm})$ the
corresponding Bers pants decompositions.  Let $\rho_n = \rho(Q_n^+,Q_n^-)$
denote hierarchy paths associated to $Q_n^{\pm}$.  By
Theorem \ref{theorem:pants:quasi} $Q \circ \geod_n$ is a quasigeodesic
of uniform quality, so we obtain from (\ref{projection short}) in
Theorem \ref{i bounded} a constant $D$ such that
\begin{equation}\label{Q projection short}
d_P(Q(\geod_n(t)),\pi_{\rho_n}(Q(\geod_n(t))) \le D.
\end{equation}

Fix $\epsilon_0$ smaller than 
$\inf_{Z \in  \calK_0}(\sys(Z))$, 
and consider the length $L_n$ of the longest
interval $J_n$ in the domain of $\geod_n$ for which there is a curve
$\gamma_n \in \calC(S)$ with 
$\ell_{\geod_n, \gamma_n}(t) \le \epsilon_0$ for $t\in J_n$.
After passing to a subsequence, there are two cases:

\bold{Case 1:} {\em The lengths $L_n$ are unbounded}. Then there is a
family of intervals $J_n = [a_n,b_n]$ and curves $\gamma_n$ for which
every point in $\geod_n(J_n)$ is a bounded distance from the stratum
$\SSS_{\gamma_n}$. Let $x_n,$ $y_n \in \SSS_{\gamma_n}$ be the closest
points in the stratum to the endpoints $\geod_n(a_n)$ and
$\geod_n(b_n)$.  Since strata are geodesically embedded in $\Teich(S)$
\cite{Wolpert:Nielsen} we have
$d_{\SSS_{\gamma_n}}(x_n,y_n)\to\infty$.  Applying
Theorem~\ref{theorem:pants:quasi} to $\SSS_{\gamma_n}$, which is
naturally the Teichm\"uller space of the subsurface $W_n=S \setminus
\gamma_n$, we find that $$d_{P(W_n)}(Q(x_n),Q(y_n))\to\infty.$$

The distance formula (\ref{dist formula}) of Theorem~\ref{hierarchy
  paths} implies that there exist (non-annular) subsurfaces
$X_n\subseteq W_n$ such that $d_{X_n}(Q(x_n),Q(y_n))\to\infty$.

Now since $\dwp(x_n,\geod_n(a_n))$ and $\dwp(y_n,\geod_n(b_n))$ are bounded, 
and since $Q(\geod_n)$ is a bounded distance in $P(S)$ from $|\rho_n|$ (by
(\ref{Q projection short}), we
find that there are $i_n$, $j_n$ 
such that $d_{X_n}(\rho_n(i_n),\rho_n(j_n)) \to \infty$. 
But by Theorem~\ref{hierarchy paths}, part~(\ref{bded image}), 
this means that
$d_{X_n}(Q_n^-,Q_n^+)$ is unbounded in $n$,
contradicting the hypothesis (\ref{bd combinatorics}).

\bold{Case 2:} {\em The lengths $L_n$ are bounded by some $L' > 0$.}
In this case, we will argue that if the systole goes to $0$ on
$\geod_n$ then Dehn twisting is building up somewhere along the
geodesic and that this buildup persists to its endpoints.  This
conclusion will contradict the bounds on annulus projections:



\begin{lemma}{twist big2}
  Given positive constants $\epsilon_0$, $L$ and $a$, let
  $\geod_n \colon [0,T_n] \to \Teich(S)$ be a sequence of Weil-Petersson geodesics of
  length $2a < T_n\le L$, and let $J_n\subset [a,T_n-a]$ be subintervals
with the property that for each $\alpha \in \calC(S)$ we
  have
\begin{equation}
\max_{t \in J_n} \ell_{\geod_n,\alpha}(t) \ge \epsilon_0.
\label{max}
\end{equation}
  Then either
\begin{enumerate} 
\item $\inf_n\inf_{t\in J_n}\sys(\geod_n(t)) > 0$, or 
\item after possibly passing to a subsequence,
there are  $\gamma_n \in \calC(S)$ for which
$$\inf_{J_n}\ell_{\geod_n,\gamma_n} \to 0 $$
and
$$d_{\gamma_n}(\nu(\geod_n(0)),\nu(\geod_n(T_n))) \to \infty.$$
\end{enumerate}
\end{lemma}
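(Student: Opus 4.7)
The plan is to argue by contradiction: assume (1) fails and produce $\gamma_n$ satisfying (2). After passing to a subsequence, pick $t_n \in J_n$ and curves $\gamma_n$ with $\ell_{\gamma_n}(\geod_n(t_n)) \to 0$. Applying elements of $\Mod(S)$, normalize so that $\gamma_n = \gamma$ is a fixed curve and $\geod_n(t_n)$ stays in a compact neighborhood of the stratum $\SSS_\gamma$ inside the WP completion $\overline{\Teich(S)}$. Since $T_n \le L$ is uniformly bounded, the geodesics $\geod_n$ remain in a precompact region of $\overline{\Teich(S)}$, so Wolpert's geodesic limit theorem (Theorem~\ref{geodesic limits}) supplies a subsequential limit $\geod_\infty$, a WP geodesic in the augmented Teichm\"uller space, parametrized so that $t_n \to t_\infty$ and $T_n \to T_\infty \in [2a,L]$, with $\geod_\infty(t_\infty) \in \SSS_\gamma$.

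The hypothesis (\ref{max}) applied to $\alpha = \gamma$ ensures that $\ell_\gamma$ attains a value of at least $\epsilon_0$ somewhere in $J_n$. Combined with $\ell_\gamma(\geod_n(t_n)) \to 0$ and the strict convexity of $\ell_\gamma$ along WP geodesics, this shows that $\geod_n$ executes a genuine approach-and-return near $\SSS_\gamma$, dipping to depth tending to $0$ near $t_n$ and recovering to a length bounded below by some definite constant on both sides of $t_n$, which is possible since $t_n \in [a, T_n-a]$ is buffered from the endpoints by a definite parameter distance. The crux of the argument is that such an approach-and-return near $\SSS_\gamma$ forces the Fenchel-Nielsen twist $\theta_\gamma$ to accumulate without bound as the minimum of $\ell_\gamma$ tends to zero. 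Quantitatively, Wolpert's asymptotic description of WP geodesics near strata implies that, between two times on opposite sides of $t_n$ at which $\ell_\gamma$ is bounded below, the relative Fenchel-Nielsen twist in $\gamma$ tends to infinity.

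Once this geometric twisting is in hand, the final step is to convert it into a bound on annular subsurface projections. The Bers markings $\nu(\geod_n(0))$ and $\nu(\geod_n(T_n))$ contain Bers transversals to $\gamma$, and their relative winding number around $\gamma$ — that is, the annular projection distance $d_\gamma(\nu(\geod_n(0)), \nu(\geod_n(T_n)))$ — agrees up to bounded additive error with the accumulated Fenchel-Nielsen twist between $\geod_n(0)$ and $\geod_n(T_n)$, provided $\ell_\gamma$ is not too small at the endpoints. Reversing the mapping class group element applied at the start then gives the conclusion for the original sequence $\gamma_n$. The main obstacle is to make this geometric-to-combinatorial translation rigorous, in particular to handle the case where the endpoints of $\geod_n$ may themselves lie near $\SSS_\gamma$ (so that Bers transversals are subject to ambiguity): this is precisely where Wolpert's refined asymptotics for WP geodesics near strata (Theorem~\ref{geodesic limits}) provides both the existence of the limit $\geod_\infty$ and the quantitative control of twist accumulation needed to push the divergence of $\theta_\gamma$ out to the endpoints and hence to the endpoint Bers markings.
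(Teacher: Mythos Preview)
Your outline has the right shape---invoke Wolpert's geodesic-limit theorem and convert the resulting twist buildup into annular projection distance at the endpoints---but there are two substantive gaps that the paper's proof handles differently.

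First, you fix $\gamma_n$ at the outset as a curve realizing (or nearly realizing) the systole at $t_n$, normalize it to a fixed $\gamma$, and then assert that the approach-and-return near $\SSS_\gamma$ forces $\theta_\gamma$ to diverge. But Theorem~\ref{geodesic limits} does not say this: it only guarantees that the \emph{multi}-twist $\calT_{i,n}\in\twist(\sigma_i\setminus(\tau_i\cup\tau_{i+1}))$ is unbounded, so if several curves are pinched simultaneously (i.e.\ $|\sigma_i|>1$), the unbounded twisting may occur around some other $\gamma'\in\sigma_i$ while the twist in your chosen $\gamma$ stays bounded. The paper reverses the order of choice: it first applies Wolpert's theorem to obtain the limiting configuration, uses hypothesis~(\ref{max}) to locate a genuine break point $t_i\in J$, and \emph{then} selects $\gamma\in\sigma_i\setminus(\tau_i\cup\tau_{i+1})$ to be a curve along which the power of $\calT_\gamma$ in $\calT_{i,n}$ is unbounded. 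The pullbacks $\gamma_n=\varphi_{i,n}^{-1}(\gamma)$ are the curves in the conclusion.

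Second, your last step---propagating the twist divergence from the vicinity of $t_n$ out to the Bers markings at the endpoints---is where most of the work lies, and ``agrees up to bounded additive error'' hides the difficulty. Between $t_i$ and the endpoints the geodesic may approach several other strata $\SSS_{\sigma_j}$, and the Bers markings at $0$ and $T_n$ are controlled only through this chain. The paper builds auxiliary partial markings $\mu_j^\pm$ adapted to each $\sigma_j$ and $\tau_j$, proves three claims comparing their $\gamma_n$-projections (crucially using that $\gamma_n\notin\sigma_{j,n}$ for $j\neq i$, by convexity of length), and telescopes via the triangle inequality to get $d_{\gamma_n}(\mu_{0,n}^+,\mu_{k+1,n}^-)\to\infty$; only then does bounded length at the endpoints transfer this to the Bers markings. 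Your sketch does not account for these intermediate strata, and without that chain the argument does not close.
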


We postpone the proof of this lemma to \S\ref{proof twist big2}, and
use it now to complete the proof
of Theorem~\ref{bc implies bg} in the finite case.

Let $\geod_n(s_n)$ be a sequence of points on $\geod_n$ for which 
$\sys(\geod_n(s_n)) \to 0$, and let $J_n=[a_n,b_n]$ be minimal-length intervals containing
$s_n$ such that, for each $\gamma\in\CC(S)$,
$\max_{J_n}\ell_{\geod_n,\gamma} \ge \epsilon_0$. Such intervals exist
since $\sys > \ep_0$ at the endpoints of $\geod_n$, and 
since we are in Case~2, the length of a minimal one is at most $L'$. 

Let $K_\WP$ and $C_\WP$ denote the multiplicative and additive
quasi-isometry constants of Theorem \ref{theorem:pants:quasi}, and 
choose $K' > K_\WP((D+4)c_1' + D + C_\WP )$. 

Let $I_n=[t_n^-,t_n^+]$ be the interval containing $J_n$ satisfying
either $t^+_n - b_n = K'$ or $t^+_n$ equals the forward endpoint of
$\geod_n$ if the latter is distance less than $K'$ from $b_n$, and
similarly for $t^-_n$ and $a_n$. In particular the length of $I_n$ is
bounded by $2K'+L'$.  Note also that the distance of $J_n$ from each
endpoint of $I_n$ is uniformly bounded below:  in the case where
$t_n^+$ or $t_n^-$ is an endpoint of $\geod_n$, this follows from the
fact that those endpoints project to $\calK_0$, and that $\epsilon_0$ was
chosen strictly smaller than $\inf_{Z \in  \calK_0}(\sys(Z))$. 

We may therefore apply
Lemma~\ref{twist big2}, where $I_n$ play the role of the parameter
intervals $[0,T_n]$, to conclude (possibly passing to a subsequence)
the existence of curves $\gamma_n$ for which 
\begin{equation}
\label{gamma n twist grows}
d_{\gamma_n}(\nu(\geod_n(t_n^-)),\nu(\geod_n(t_n^+))) \to \infty
\end{equation}
and $t_n\in J_n$ such that
$$
\ell_{\geod_n,\gamma_n}(t_n)\to 0.
$$

If $\geod_n(t^+_n)$ is not an endpoint of $\geod_n$, we have
$d_\WP(\geod_n(t_n),\geod_n(t)) \ge K'$ for   $t\ge t^+_n$, so we conclude
$$
d_P(Q(\geod_n(t_n)),Q(\geod_n(t))) \ge K'/K_\WP - C_\WP.
$$
Now by  (\ref{Q projection short}), we obtain the bound
$$
d_P(Q(\geod_n(t)),\pi_{\rho_n}(Q(\geod_n(t)))) \le D. 
$$
Hence, we have
\begin{equation}\label{tn far from t}
d_P(Q(\geod_n(t_n)),\pi_{\rho_n}(Q(\geod_n(t)))) \ge K'/K_\WP - C_\WP - D.
\end{equation}
Let $v_n(t)$ denote a vertex of the main geodesic of $\rho_n$ which
lies in $\pi_{\rho_n}(Q(\geod_n(t)))$ (such a vertex exists by
definition of $\pi_{\rho_n}$).  Now since $\gamma_n$ lies in
$Q(\geod_n(t_n))=\text{base}(\mu(\geod_n(t_n))$ for large $n$
(recalling that $\ell_{\geod_n,\gamma_n}(t_n)\to 0$), by (\ref{tn far
  from t}) together with Lemma \ref{G properties} we get a lower bound
on $\CC(S)$-distance,
\begin{equation}\label{gamma far from nu}
d_S(\gamma_n,v_n(t)) \ge {\frac{1}{c_1'}}\left({\frac{K'}{K_\WP}} - C_\WP
- D\right) \ge D+4.
\end{equation}



Now for $t \ge t^+_n$, again by (\ref{Q projection short}), we can
connect any vertex of $\mu(\geod_n(t))$ to $v_n(t)$ by a path in
$\CC(S)$ of length at most $D+2$. Hence by (\ref{gamma far from nu})
every vertex in this path has distance at least 2 from $\gamma_n$, and
therefore intersects $\gamma_n$. 

By the Lipschitz property of projections to $\calA_{\gamma_n}$, Proposition~\ref{Lipschitz
  projection}, it follows that we have
\begin{equation}\label{bound gamma n twist}
d_{\gamma_n}(v_n(t),\mu(\geod_n(t))) \le 4(D+2)
\end{equation}
for each $t \ge t_n^+$.
Now, the diameter of the projection to $\AAA_{\gamma_n}$
of all the vertices of $m_n$ that are forward of $v_n(t^+_n)$ is
bounded above by $2M_2$, by Theorem~\ref{hierarchy paths},
part~\ref{bded image}.  By the triangle inequality (applying (\ref{bound gamma
n twist}) once for
$t=t_n^+$ and once for $t>t_n^+$), we have
\begin{equation}\label{bounded forward motion}
d_{\gamma_n}(\nu(\geod_n(t)),\nu(\geod_n(t^+_n))) < 8(D+2 ) + 2 M_2
\end{equation}
for all $t\ge t^+_n$.  
The same bound can be obtained for $t^-_n$ and $t\le t^-_n$.
Of course if $t^+_n$ is the forward endpoint of $\geod_n$ then
(\ref{bounded forward motion}) holds trivially, and similarly for
$t^-_n$. 

Now applying these bounds to
the endpoints  $u_n \le t^-_n$ and $w_n \ge t^+_n$ of
$\geod_n$, and using the growth inequality (\ref{gamma n twist grows}), 
we obtain
$$
d_{\gamma_n}(\nu(\geod_n(u_n)),\nu(\geod_n(w_n))) \to \infty.
$$
But this contradicts the bounded-combinatorics hypothesis on $\geod_n$.

We conclude that in fact $\geod_n$ are $\KK$-cobounded for some $\KK$. 
This concludes the proof of Theorem \ref{bc implies bg} in the case of
finite intervals, modulo Lemma \ref{twist big2}.

\subsection{Proof of Lemma \ref{twist big2}}
\label{proof twist big2}

We will apply Wolpert's discussion of limits of finite length geodesics in
the Weil-Petersson completion (\cite[Theorem 23]{Wolpert:compl}):

\begin{theorem}{geodesic limits}{\bf -- Wolpert.}  {\sc (Geodesic Limits)}
Let $\geod_n \colon [0,L] \to \closure{\Teich(S)}$ be a sequence of finite
length geodesic segments of length $L$ in the Weil-Petersson
completion.  Then there exists a partition of the interval $[0,L]$ by
$0 = t_0 < t_1 < t_2 < \ldots t_{k} < t_{k+1} =L$, 
and
simplices $\sigma_0, \ldots, \sigma_{k+1}$ 
and simplices $\tau_i = \sigma_{i-1} \cap \sigma_i$ 
in $\widehat{\calC(S)}$
and a piecewise geodesic
$$\hat{\geod} \colon [0,L] \to \closure{\Teich(S)}$$
with the following properties.
\begin{enumerate}
\item $\hat{\geod} ((t_{i-1},t_{i})) \subset \calS_{\tau_i}$, $i=1,
  \ldots, k+1$,
\item $\hat{\geod}(t_i) \in \calS_{\sigma_i}$, $i = 0, \ldots, k+1$,
\item there are elements $\psi_n \in \Mod(S)$ and $\calT_{i,n} \in
  \twist(\sigma_i - \tau_i \cup \tau_{i+1})$, for $i = 1, \ldots, k$,
  so that after passing to a subsequence, $\psi_n(\geod_n([0,t_1]))$
  converges in $\closure{\Teich(S)}$ to the restriction
  $\hat{\geod}([0,t_1])$ and for each $i = 1, \ldots , k$, and $t \in
  [t_i,t_{i+1}]$, 
$$\calT_{i,n} \compos \ldots \compos \calT_{1,n} \compos \psi_n \left(
\geod_n(t) \right) \to \hat{\geod}(t)$$ as $n \to
\infty$. \marginpar{\tiny made slight change... okay? -J OK -Y}
\item The elements $\psi_n$ are either trivial or unbounded, and the
  elements $\calT_{i,n}$ are unbounded.
\end{enumerate}
The piecewise-geodesic $\hat{\geod}$ is the minimal length path in
$\closure{\Teich(S)}$ joining $\hat{\geod}(0)$ to $\hat{\geod}(L)$ and
intersecting the closures of the strata $\calS_{\sigma_1},
\calS_{\sigma_2}, \ldots, \calS_{\sigma_k}$ in order.
\end{theorem}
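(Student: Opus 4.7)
\textbf{Proof plan for Theorem \ref{geodesic limits}.} The plan is to extract the limiting piecewise geodesic inductively, one smooth segment at a time, by successively applying mapping class group elements to move the relevant portion of each $\geod_n$ into a compact region of $\closure{\Teich(S)}$ and then passing to a subsequence. The two key structural facts are (i) Wolpert's geodesic convexity of each stratum $\calS_\sigma$ in $\closure{\Teich(S)}$ and the $\CAT(0)$ property of the completion (Yamada), which together guarantee that a limit of geodesics is a geodesic whenever its endpoints lie in a common stratum, and (ii) the properness of the $\Mod(S)$-action on the augmented Teichm\"uller space modulo the finite-volume moduli orbifold, which lets us normalize.

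\emph{Base of induction.} First I would choose $\psi_n \in \Mod(S)$ so that $\psi_n(\geod_n(0))$ lies in a fixed fundamental domain; after a subsequence, the basepoints converge to some $p_0 \in \calS_{\sigma_0}$. Working in extended Fenchel--Nielsen coordinates near $p_0$, the tangent data of $\psi_n\geod_n$ at $0$ is controlled (the tangent vector lies in the WP tangent cone of the completion); after a further subsequence these tangent data converge, and by $\CAT(0)$ comparison $\psi_n\geod_n$ converges uniformly on a maximal half-open interval $[0,t_1)$ to a WP geodesic $\hat\geod|_{[0,t_1)}$ which, by continuity of length functions and Wolpert's convexity, lies in a single open stratum $\calS_{\tau_1}$ with $\tau_1 \subset \sigma_0$.

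\emph{Transition step.} At the right endpoint $t_1$, by continuity (length functions $\ell_\alpha$ extend continuously to the completion) the limit $\hat\geod(t_1)$ lies in some stratum $\calS_{\sigma_1}$ with $\tau_1 \subset \sigma_1$. For curves $\beta \in \sigma_1 \setminus \tau_1$ that become short only at time $t_1$, the geodesics $\psi_n\geod_n$ may wind nontrivially around $\beta$ before and after approach, because near a stratum the WP metric is well-approximated by a product in which each pinching curve contributes a factor isometric to a neighborhood of the cone point in $\mathbb{R}^2$ with its polar-like metric (cf.\ Wolpert's local model). The ``winding'' around each such $\beta$ records an element of $\twist(\sigma_1 \setminus (\tau_1 \cup \tau_2))$ once one has identified the next-stratum $\tau_2$; I would pick $\calT_{1,n}$ to undo precisely this winding on both sides of $t_1$, so that $\calT_{1,n}\circ\psi_n(\geod_n)$ converges uniformly on a neighborhood of $t_1$. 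Then repeat the base step, extracting $\hat\geod$ on a further interval $[t_1,t_2)$ inside some $\calS_{\tau_2}$. The fact that $\calT_{i,n}$ is unbounded (or trivial, which can be absorbed into the partition) is exactly what witnesses that a genuine Dehn twist was being accumulated.

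\emph{Finiteness and minimality.} I would then argue termination in finitely many steps as follows. Each interior transition strictly changes the pinched-curve simplex (from $\tau_i$ to $\sigma_i \supsetneq \tau_i$, then to $\tau_{i+1} \subsetneq \sigma_i$), and between transitions the limit geodesic $\hat\geod$ lies in a single stratum, along which WP length is comparable to a smooth Riemannian metric; combined with the global length bound $L$ and the finite possibilities for nested chains of simplices in $\CC(S)$ (there are only boundedly many simplices below any $\sigma_i$ since $\dim\calC(S)$ is finite), only finitely many segments arise. Finally, minimality of $\hat\geod$ among piecewise WP-geodesics with the prescribed stratum-crossing pattern follows from lower semicontinuity of WP length under the convergence together with the fact that each $\geod_n$ is itself a length-minimizer: any strictly shorter path with the same crossing pattern could be perturbed, using the stratum structure, into paths joining $\psi_n^{-1}(\cdots)$ back near the original endpoints that beat $\geod_n$, a contradiction.

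\emph{Main obstacle.} The technical heart is Step~3, the identification of the Dehn twists $\calT_{i,n}$ and the proof that, after applying them, the geodesics converge uniformly across the transition time $t_i$ (and not merely on each side). This requires a precise understanding of the WP metric in a neighborhood of a boundary stratum, specifically that the transverse geometry to $\calS_{\sigma_i}$ decomposes (to leading order) as a product of $\mathbb{R}^2$-cusps in the pinching directions, so that a WP geodesic approaching the stratum is approximated by straight lines in these local models with a well-defined amount of angular wrapping; this wrapping is exactly $\calT_{i,n}$. Establishing this approximation --- and showing that the error does not accumulate across the transition --- is where Wolpert's explicit asymptotic expansions for the WP metric near strata are indispensable.
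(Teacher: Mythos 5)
The paper does not prove this statement: it is quoted as Theorem 23 of Wolpert's paper \cite{Wolpert:compl} and used as a black box in the proof of Lemma \ref{twist big2}, so there is no in-paper argument to compare yours against. Judged on its own terms, your outline correctly identifies the main ingredients of Wolpert's argument (normalization by $\Mod(S)$ via compactness of the augmented moduli space, geodesic convexity of strata and the $\CAT(0)$ property of $\closure{\Teich(S)}$, and the local product/cusp model of the metric transverse to a stratum), but it is a plan rather than a proof: the step you yourself label the ``main obstacle'' --- constructing the twists $\calT_{i,n}$ and proving convergence of the twist-normalized geodesics \emph{across} the transition times $t_i$, not merely on each open side --- is essentially the content of claims (3) and (4), and you defer it entirely to Wolpert's asymptotic expansions without carrying it out.

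Two further gaps. Your finiteness argument does not work as stated: knowing that each open segment lies in a single stratum and that the total length is at most $L$ does not prevent the transition times from accumulating, and ``finitely many nested chains of simplices'' does not help since the same simplices can recur; one needs a definite lower bound on the length of each nontrivial segment, or Wolpert's direct argument that the limit has only finitely many singular times. Likewise, the concluding minimality assertion requires that $\hat{\geod}$ minimize length among \emph{all} paths joining its endpoints and meeting the closures of $\calS_{\sigma_1},\dots,\calS_{\sigma_k}$ in order; your lower-semicontinuity remark only compares $\hat{\geod}$ with perturbations pulled back to the approximating geodesics and does not rule out a shorter competitor with the same crossing pattern that is not realized as such a limit.
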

For convenience we define, for $i\ge 0$, 
\begin{equation}\label{define varphi}
\varphi_{i,n} = \calT_{i,n} \compos \ldots \compos \calT_{1,n}\compos
\psi_n.
\end{equation}

To understand the meaning of this somewhat technical statement, it is
helpful to focus on the case where all the $\tau_i$ are empty. In this
case the statement is that the sequence of bounded-length
geodesics is converging to a chain of segments in the interior of
$\Teich(S)$ with endpoints on various strata. Moreover, in the
approximating pictures, the geodesics approach the strata and ``wind
around'' them in the sense that the twisting parameters for at least
one curve per stratum grow without bound. This is encoded by the
twists $\calT_{i,n}$. 

See Figure~\ref{twist3} for a cartoon of this limiting process. 

\medskip

\realfig{twist3}{
Geodesic limits in Teichm\"uller and Moduli space.
Horizontal arrows denote the covering from Teichm\"uller to moduli
space, and the vertical arrows denote convergence. In this
figure, $\tau_i$ are all empty.
}{3in}

\medskip

Now proceeding with the proof of Lemma \ref{twist big2},
fix positive $\epsilon_0$, $L$, and $a$. It suffices to
show, for any sequence
  $$\geod_n \colon [0,T_n] \to \Teich(S)$$ of Weil-Petersson geodesics of
  length $T_n\le L$, and intervals $J_n\subset [a,T_n-a]$ such that
\begin{enumerate}
\item $\sup_{t \in J_n} \ell_{\geod_n,\alpha}(t) > \epsilon_0$ for each
  $\alpha\in\calC(S)$ and
\item $\inf_{t\in J_n}\sys(\geod_n(t)) \to 0$,
\end{enumerate}
that, after passing to a subsequence,  there are $\gamma_n\in\calC(S)$ such that
$$
\inf_{t\in J_n}\ell_{\geod_n,\gamma_n}(t) \to 0
$$
and
$$d_{\gamma_n}(\nu(\geod_n(0)),\nu(\geod_n(T_n))) \to \infty.$$

Passing to a subsequence, trimming the intervals slightly and
changing the constants, we may assume that $T_n \equiv L$, and that 
$J_n$ converge to a subinterval $J$. Note that the lengths of $J_n$ are
bounded below since $\ell_{\geod_n,\gamma_n}$ achieves the value $\ep_0$  in $J_n$
but its infimum goes to 0; hence $J$ has positive length.

Then by Theorem~\ref{geodesic limits}, after passing again to a subsequence,
we have a partition of the interval $[0, L]$ with $0 = t_0 < \ldots <
t_k < t_{k+1} = L$, simplices $\sigma_0, \ldots, \sigma_{k+1}$, and
$\tau_1, \ldots, \tau_{k+1}$ in the curve complex $\calC(S)$, with
$\tau_i \cup \tau_{i+1} \subset \sigma_i$ and a piecewise geodesic
path
$$\hat{\geod} \colon [0,L] \to \compl,$$ for which
$\hat{\geod}([t_j,t_{j+1}])$ is a geodesic segment in the stratum
$\calS_{\tau_{j+1}}$ joining the strata $\calS_{\sigma_j}$ and
$\calS_{\sigma_{j+1}}$, and the elements $\calT_{i,n} \in
\twist(\sigma_i -  \tau_i \cup \tau_{i+1})$ are unbounded in $\Mod(S)$.
Assume the conclusions of Theorem~\ref{geodesic limits} hold, and let
$\varphi_{i,n}$ be as in (\ref{define varphi}). 

For each $i$ and $n$, let 
$$
\sigma_{i,n} = \varphi_{i,n}^{-1}(\sigma_i) = \varphi_{i-1,n}^{-1}(\sigma_i)
$$
be the pullback of $\sigma_i$ to the $\geod_n$ picture. Similarly let 
$$
\tau_{i,n} = \varphi_{i-1,n}^{-1}(\tau_i).
$$ 

We claim that $k>0$ and in fact one of the $t_i$ is contained in 
$J$. If not, then $J$ is contained within some $(t_{i-1},t_{i})$,
and so $\hat\geod(J)\subset \calS_{\tau_{i}}$. Since
$\inf_{J_n}\sys(\geod_n)\to 0$, it follows that $\tau_i$ is
nonempty. But this means that $\tau_{i,n}$ has
length going to 0 at every point of $\geod_n(J_n)$, which contradicts
property (1). 

Hence, we may fix positive $i \le k$, such that $t_i\in J$, and let $\gamma$ be a curve in
$\sigma_i \setminus (\tau_i \union\tau_{i+1})$
so that the power of the $\gamma$-Dehn twist
$\calT_\gamma$ determined by the element $\calT_{i,n}$ is unbounded
with $n$ (since the multi-twist $\calT_{i,n} \in \twist(\sigma_i -
\tau_i \cup \tau_{i+1})$ is unbounded, there exists such a $\gamma$).
Possibly passing to a subsequence, we can assume the power of
$\calT_\gamma$ tends to infinity.
Let $\gamma_n\in\sigma_{i,n}$ denote the pullback
$$
\gamma_n = \varphi_{i,n}^{-1}(\gamma).
$$

For each $i= 0, \ldots, k+1$, choose partial markings $\mu_i$ of $S$
so that
\marginpar{add ref for base curves..}
\begin{enumerate}
\item $\sigma_i \subset \base(\mu_i)$, and
\item $\mu_i$ restricts to a full marking of each component $Y \subset
  S \setminus \sigma_i$ with complexity at least one.
\end{enumerate}
Furthermore, for each $i = 0, \ldots, k$, let $\mu_i^+$ be an
enlargement of $\mu_i$ so that $\base(\mu_i^+) = \base(\mu_i)$ and
$\mu_i^+$ restricts to a full marking of each component of $S
\setminus \tau_{i+1}$ of complexity at least one. Likewise, for each
$i = 1, \ldots, k+1$, let $\mu_i^-$ be an enlargement of $\mu_i$ with
$\base(\mu_i^-) = \base(\mu_i)$ and so that $\mu_i^-$ restricts to a
full marking of each component of $S \setminus \tau_i$ of complexity
at least one. Note that $\mu_i^+$ differs from $\mu_i$ just by the
addition of transversals to the components of
$\sigma_i\setminus\tau_{i+1}$, and similarly for $\mu_i^-$ and
$\sigma_i\setminus\tau_i$. 

Further, define the pullbacks
$$
\mu_{i,n}^+ = \varphi_{i,n}^{-1}(\mu_i^+)
$$
and 
$$
\mu_{i,n}^- = \varphi_{i-1,n}^{-1}(\mu_i^-).
$$

Now we want to measure the twisting of these markings relative to
$\gamma_n$. We claim:
\begin{enumerate}
\item $d_{\gamma_n}(\mu_{i,n}^-,\mu_{i,n}^+) \to \infty$ as
  $n\to\infty$,
\item $d_{\gamma_n}(\mu_{j,n}^-,\mu_{j,n}^+)$ is bounded if $j\ne i$,
  and
\item $d_{\gamma_n}(\mu_{j,n}^+,\mu_{j+1,n}^-)$ is bounded for all
  $j$. 
\end{enumerate}
To see the first claim, note that 
$$
\varphi_{i,n}(\mu_{i,n}^-) = \calT_{i,n}(\mu_i^-).
$$ 
Thus, after applying $\varphi_{i,n}$ to all curves in our expression
we get
$$
d_\gamma(\calT_{i,n}(\mu_i^-),\mu_i^+).
$$
Now $\mu_i^-$ and $\mu_i^+$ are fixed, and each contains $\gamma$ as
well as a transversal for $\gamma$.  Since $\calT_{i,n}$ contains an
arbitrarily large power of $\calT_\gamma$, claim (1) follows.

To see claim (2), 
note that $\mu_{j,n}^+$ and $\mu_{j,n}^-$ both contain
$$
\mu_{j,n} = \varphi_{j,n}^{-1}(\mu_j) = \varphi_{j-1,n}^{-1}(\mu_j).
$$
Observe further that  $\mu_{j,n}$ contains $\sigma_{j,n}$, and in each component of
$S\setminus \sigma_{j,n}$ it restricts to a full marking. 

Now we claim that 
\begin{equation}\label{gamma in only one sigma}
\gamma_n \notin \sigma_{j,n} \ \ \text{for any $j\ne i$}.
\end{equation}
For otherwise the length of $\gamma_n$ along $\geod_n$ would
converge to $0$ both at $t_i$ and at $t_j$, and hence by convexity on
all of $[t_{i-1},t_i]$ or $[t_i,t_{i+1}]$ (the first if $j<i$ and the
second if $j>i$).  This implies that
$\gamma_n\in\tau_{i,n}$ or $\gamma_n\in\tau_{i+1,n}$, which
contradicts the choice of
$\gamma\in\sigma_i\setminus(\tau_i\union\tau_{i+1})$, so we conclude that
(\ref{gamma in only one sigma}) holds.  Thus, $\gamma_n$ intersects $\mu_{j,n}$
nontrivially, so $\pi_{\calA(\gamma_n)}(\mu_{j,n})$ is nonempty, and
it follows that the projections of the two enlargements are a bounded
distance apart in $\calA(\gamma_n)$, establishing claim (2).

To prove claim (3), note that $\mu_j^+$ and $\mu_{j+1}^-$ contain
$\tau_{j+1}$ and restrict to full markings in $S\setminus\tau_{j+1}$,
where their marking \marginpar{defined earlier?}  distance is some
finite number. Hence we may connect them with a finite sequence of
markings of the same type. Applying $\varphi_{j,n}^{-1}$, we obtain a
sequence of the same length connecting $\mu_{j,n}^+$ to
$\mu_{j+1,n}^-$, through markings that contain $\tau_{j+1,n}$ and are
full in its complement.  Since $\tau_{j+1,n}$ is contained in both
$\sigma_{j,n}$ and $\sigma_{j+1,n}$, $\gamma_n$ cannot lie in
$\tau_{j+1,n}$ by (\ref{gamma in only one sigma}). We conclude that
all the markings intersect $\gamma_n$ nontrivially, and this gives a
bound on $d_{\gamma_n}(\mu_j^+,\mu_{j+1}^-)$, as desired.
\marginpar{check end cases}

Having established all three claims, we combine them with the triangle
inequality to conclude
$$
d_{\gamma_n}(\mu_{0,n}^+,\mu_{k+1,n}^-) \to \infty.
$$
Now note that $\mu_{0,n}^+$ has bounded total length in $\geod_n(0)$
and $\mu_{k+1,n}^-$ has bounded total length in $\geod_n(L)$. It
follows that
$$
d_{\gamma_n}(\nu(\geod_n(0)),\nu(\geod_n(L))) \to \infty,
$$
as desired.

It remains to check that $$\inf_{J_n}\ell_{\geod_n,\gamma_n} \to
0.$$ 
Recall that $t_i\in J = \lim J_n$, and $\ell_{\geod_n,\gamma_n}(t_i) \to
0$. If $t_i \in J_n$ for $n$ sufficiently large, then we are done, but even if not,
note that on  $[t_i,t_{i+1}]$ the length functions
$\ell_{\geod_n,\gamma_n}$ converge uniformly to $\ell_{\hat\geod,\gamma}$,
and similarly for $[t_{i-1},t_i]$. Hence the infima on $J_n$ converge
to 0.  This concludes the proof of Lemma \ref{twist big2}.

\subsection{The infinite case}
\label{infinite case}

We are left to consider the case when
$\geod$ is bi-infinite or the case of an infinite ray $\ray$.  

Suppose a ray $\ray$ has its basepoint in $\til\calK_0$ and its ending lamination
$\lambda=\lambda^+(\ray)$ has bounded combinatorics.  In
\cite{Masur:Minsky:CCII}, it is shown that there exists an infinite
hierarchy path $\rho_\ray$ beginning at $Q(\ray(0))$ so
that $\rho_\ray(i)$ is asymptotic to $\lambda$ in $\pi(\el(S))$.

Letting $\mu_i = \rho_\ray(i)$ be the markings along the hierarchy path
$\rho_\ray$, we may find points $X_i$ in $\til\calK_0$ on which every
curve in $P_i$ has length bounded by some fixed $\ell$, independent of
$i$.  Letting $X_0 = \ray(0)$, the sequence of geodesic segments
$\geod_i = \geod(X_0,X_i)$ joining $X_0$ to $X_i$ projects to the
compact set $\calK$ by the above.

It follows that we may extract a limiting ray $\ray_\infty$ in the
visual sphere at $X_0$, which by 
Proposition~\ref{proposition:continuity} has ending lamination
$\lambda$ (as the lamination $\lambda$ fills the surface).  
As each
$\geod_i$ lies in $\til\calK$, the limit $\ray_\infty$ lies in 
$\til\calK$ as well.  Then $\ray_\infty$ is recurrent, and thus by the
main theorem of \cite{\BMMI} (Theorem \ref{relc} here) we have
that $\ray_\infty = \ray$.  We 
conclude that $\ray$ lies in  $\til\calK$ as desired.

Consider a bi-infinite geodesic $\geod$, with ending laminations
$\lambda^+$ and $\lambda^-$ with $K$ bounded combinatorics.
 In
\cite{Masur:Minsky:CCII}, it is shown that there exists $\rho_\pm =
\rho(\lambda^+, \lambda^-)$, a bi-infinite hierarchy path limiting to
$\lambda^+ \in \pi(\el(S))$ in the forward direction and $\lambda^-
\in \pi(\el(S))$ in the backward direction.  We choose
$X_i^+$ and $X_i^-$ in $\til\calK_0$ on which pants decompositions $P_i^+
\in |\rho_\pm|$ and $P_i^- \in |\rho_\pm|$ have bounded length, where
$P_i^+ \to \lambda^+$ and $P_i^- \to \lambda^-$.

\realfig{visible}{
Extracting a bi-infinite geodesic limit.}{1.9in}

Again the geodesics $\geod(X_0,X_i^+)$ and $\geod(X_0,X_i^-)$ limit to
rays $\ray^+$ and $\ray^-$ based at $X_0$, with ending laminations
$\lambda^+$ and $\lambda^-$ by
Proposition~\ref{proposition:continuity} as above.  Each of these rays lies in
the set $\til\calK$ by the above, and is therefore recurrent.  By the
visibility property for recurrent rays, \cite[Thm. 1.3]{\BMMI}, there
is a unique bi-infinite ray $\geod_\infty$ forward asymptotic to
$\ray^+$ and backward asymptotic to $\ray^-$.  It follows that
$\geod_\infty$ has the ending laminations $\lambda^+$ and $\lambda^-$,
from which we conclude that $\geod_\infty = \geod$ by
Theorem \ref{relc}.

Now by an application of
Theorem~\ref{i bounded} to the quasi-geodesic $Q(\geod(X_0,X_i^+))$,
there is a $D>0$ so that for each fixed
$j >0$, each pants decomposition $P_j$ lies within distance $D$ of $Q(\geod(X_0,X_i^+))$
for each $i \ge j$.  It follows that $$\dwp(X_j,\geod(X_0,X_i^+)) < D' = K_\WP
D + C_\WP$$ for
each $i \ge j$.
Thus each $X_j^+$ lies distance at most $D'$ from $\ray^+$. 
Similarly each $X_j^-$ lies distance at most $D'$ from $\ray^-$. 

Thus, if $Z_i^+$ and $Z_i^-$ are the nearest point projections of
$X_i^+$ and $X_i^-$ onto $\ray^+$ and $\ray^-$, the geodesic segments
$\geod(X_i^-,X_i^+)$ lie at a uniformly bounded distance from the
geodesic $\geod(Z_i^-,Z_i^+)$ since $\dwp$ on $\Teich(S)$ is $\CAT(0)$.  The
geodesics $\geod(Z_i^-,Z_i^+)$ converge to $\geod$ by the visibility
construction of \cite[Thm. 1.3]{\BMMI}, so it follows that
$\geod(X_i^-,X_i^+)$ converges to $\geod$ as well.  But
$\geod(X_i^-,X_i^+)$ lies in $\til\calK$ for all $i>0$, by the finite case
of Theorem~\ref{bc implies bg}, so we may conclude that $\geod$ also
lies in $\til\calK$, completing the proof.

\section{Counting closed orbits and topological entropy}
\label{entropy}
We return to consider the Weil-Petersson geodesic flow on $T^1
\Teich(S)$, the unit tangent bundle to $\Teich(S)$ and the flow on the
quotient $\MM^1(S)$.  We recall from \cite{\BMMI} that the geodesic
flow is not everywhere defined, but is defined on the full
Liouville-measure subset $\calF \subset \calM^1(S)$ consisting of
lifts of bi-infinite geodesics to the unit tangent bundle.

Given any compact flow-invariant subset $\KK$ of $ \MM^1(S)$,
the question of the topological entropy $h_{\rm top}(\KK)$ of the flow
$\varphi^t$ can be formulated.
In this section we show
\begin{named}{Theorem~\ref{theorem:entropy}}
\entropy
\end{named}

The estimate of entropy follows directly from estimates on the
asymptotic growth rate of the number of closed orbits of the geodesic
flow in a compact set.  Given a compact subset $\calK \subset
\calM^1(S)$, let $n_\calK(L)$ denote the number of closed orbits of
the geodesic flow of length at most $L$ that are contained in
$\calK$. We are interested in the asymptotic growth rate
$$
p_\varphi(\calK) 
= \liminf_{L \to \infty} \frac{\log n_\calK(L) }{L}.
$$ 

\begin{named}{Theorem~\ref{growth rate}}{\sc (Counting Orbits)}
\growthrate
\end{named}

The relationship between the conclusions of Theorem~\ref{growth rate} 
and Theorem~\ref{theorem:entropy} for $\varphi^t$ lies in
Proposition~\ref{growth and entropy}, below, once we have shown that $\varphi^t$ restricted
to any compact invariant subset is {\em expansive}.
\begin{definition}{expansive}
A flow $\varphi^t$ on a metric space $(X,d)$ is {\em expansive} if
there is a constant $\delta > 0$ so that the following property holds.
Suppose $f\colon \reals\to\reals$ is any continuous surjective function 
with $f(0) = 0$ and such that $d(\varphi^t(x) , \varphi^{f(t)}(x)) <
\delta$ for all $x,t$.  Then if $x,y$ are such that 
$$
d(\varphi^t(x),\varphi^{f(t)}(y)) < \delta
$$
for all $t$, then there is a $t_0$ so that $\varphi^{t_0}(x) = y$.
\end{definition}
We defer the proof that $\varphi^t$ is expansive to Lemma~\ref{flow
  expansive} and proceed to the proof of Theorem~\ref{growth rate},
from which we will derive Theorem~\ref{theorem:entropy} as a
consequence.  To do so, we note the following.

\begin{proposition}{growth and entropy}{\rm
    \cite{Hasselblatt:Katok:book}} 
Let $\varphi^t$ be an expansive flow on a  metric space
$(X,d)$, and let $\calK$ be a compact invariant subset for $\varphi$.
Then we have
$$
p_\varphi(\calK) < h_{\rm top}(\calK).
$$
\end{proposition}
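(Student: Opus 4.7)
The plan is to treat this as a standard fact from the theory of expansive flows, whose proof proceeds by relating the number of closed orbits to the cardinality of Bowen-separated sets. Recall that for a compact invariant subset $\calK$, one has the equivalent definition
$$h_{\rm top}(\calK) = \lim_{\epsilon\to 0} \limsup_{T\to\infty} \frac{1}{T}\log s(T,\epsilon),$$
where $s(T,\epsilon)$ is the maximum cardinality of a subset of $\calK$ that is $(T,\epsilon)$-separated in the Bowen metric $d_T(x,y) = \max_{0\le t \le T} d(\varphi^t(x),\varphi^t(y))$. The goal is to produce, from each closed orbit of period at most $L$ lying in $\calK$, a representative point, and to show that these representatives form an $(L,\epsilon)$-separated set once $\epsilon$ is small compared to the expansivity constant~$\delta$.

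First, fix $\epsilon \in (0,\delta/3)$ and, for each closed orbit $\gamma\subset\calK$ of period $T_\gamma \le L$, pick a basepoint $x_\gamma\in\gamma$. I claim that the finite set $\{x_\gamma\}$ is $(L,\epsilon)$-separated in $\calK$. Suppose toward a contradiction that two distinct orbits $\gamma,\gamma'$ yield basepoints with $d(\varphi^t(x_\gamma),\varphi^t(x_{\gamma'}))\le\epsilon$ for all $t\in[0,L]$. Since each orbit is periodic with period at most $L$, the functions $t\mapsto\varphi^t(x_\gamma)$ and $t\mapsto\varphi^t(x_{\gamma'})$ are both periodic on $\reals$ with periods at most $L$. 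Using this periodicity, I would build a continuous surjective reparameterization $f\colon\reals\to\reals$ with $f(0)=0$ satisfying
$$d(\varphi^t(x_\gamma),\varphi^{f(t)}(x_{\gamma'})) < \delta$$
for all $t\in\reals$, by extending the comparison across fundamental domains while absorbing the small discrepancy $T_\gamma - T_{\gamma'}$ in $f$. Expansivity in the sense of Definition~\ref{expansive} then produces some $t_0$ with $\varphi^{t_0}(x_\gamma)=x_{\gamma'}$, forcing $\gamma=\gamma'$, a contradiction.

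Consequently $n_\calK(L) \le s(L,\epsilon)$ for every $L$, and taking $\liminf$ of $\tfrac{1}{L}\log$ on both sides gives
$$p_\varphi(\calK) \le \limsup_{L\to\infty} \frac{\log s(L,\epsilon)}{L} \le h_{\rm top}(\calK).$$
The strict inequality in the statement follows from a small additional slack: since $\epsilon$ may be chosen uniformly in $L$ while topological entropy is the limit as $\epsilon\to 0$, one obtains a gap, or alternatively one may invoke the variational principle together with the fact that there is always an invariant measure supported off the periodic orbits contributing to entropy in the expansive setting.

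The main obstacle is the careful construction of the reparameterization $f$, because the flow-expansivity definition involves arbitrary time changes, and the two closed orbits generally have incommensurable periods. The right approach is to define $f$ piecewise on intervals of length $T_{\gamma'}$ (so that $f$ maps the periodic structure of $\gamma$ onto that of $\gamma'$) and then to check that the small error $|T_\gamma-T_{\gamma'}|$ contributes a uniformly bounded deviation that can be kept below $\delta$ by the hypothesis $\epsilon<\delta/3$. Once this technical point is handled, the rest of the argument is formal, and the statement follows as in \cite{Hasselblatt:Katok:book}.
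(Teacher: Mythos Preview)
The paper does not prove this proposition at all: it is simply quoted from \cite{Hasselblatt:Katok:book} and used as a black box in the derivation of Theorem~\ref{theorem:entropy}. So there is no ``paper's proof'' to compare against; your attempt is a sketch of the standard argument behind the cited result. That said, two points deserve comment.

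First, the strict inequality. The standard statement (and what Katok--Hasselblatt actually prove) is $p_\varphi(\calK)\le h_{\rm top}(\calK)$, not a strict inequality; indeed for many expansive systems (Anosov flows, hyperbolic basic sets) one has equality. The ``$<$'' in the paper is almost certainly a typo for ``$\le$'', and only the non-strict version is needed for the application to Theorem~\ref{theorem:entropy}. Your paragraph attempting to upgrade $\le$ to $<$ is not correct: choosing $\epsilon$ uniformly in $L$ gives no gap (for expansive flows the outer $\lim_{\epsilon\to 0}$ is attained at any $\epsilon$ below the expansivity scale), and the appeal to the variational principle with a measure ``off the periodic orbits'' has no content here. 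You should drop that paragraph and state the inequality as $\le$.

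Second, the separated-set argument. Your outline is the right one for expansive \emph{homeomorphisms}, where periodic points of period dividing $n$ are automatically $(n,\delta)$-separated because the closeness on $[0,n]$ extends to all of $\Z$ by exact periodicity. For \emph{flows} the situation is genuinely more delicate, and you correctly identify the reparameterization as the obstacle --- but you do not actually carry it out. Two closed orbits of periods $T_\gamma\ne T_{\gamma'}$ that are $\epsilon$-close on $[0,L]$ need not remain close (even after time-change) on all of $\reals$ simply from periodicity; the drift $|T_\gamma-T_{\gamma'}|$ accumulates and your ``absorb the discrepancy in $f$'' does not by itself keep the error below $\delta$. The clean way to run the argument (as in Bowen's treatment or in Katok--Hasselblatt) is to first show that expansivity forces a uniform lower bound on the period of closed orbits and a uniform bound on how many closed orbits can pass through any $\epsilon$-ball, and then compare with separated sets using flow-boxes or local cross-sections rather than a global reparameterization. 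As written, your sketch has the right shape but the key technical step is asserted rather than proved.
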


\begin{proof}[Proof of Theorem~\ref{growth rate}]
  We define a family of compact invariant subsets for the
  Weil-Petersson geodesic flow with a larger and larger exponential
  growth rate for the number of closed orbits of length at most $L$.

  Given $K > 0$, we let $\calF_K \subset T^1 \Teich(S)$ denote the
  collection of lifts to $T^1 \Teich(S)$ of bi-infinite geodesics in
  $\Teich(S)$ with $K$-bounded combinatorics; in other words if $\geod
  \in \calF_K$ then we have
$$d_Y(\elam^+(\geod),\elam^-(\geod)) \le K$$ for each proper essential subsurface $Y
\subsetneq S$ that is not a three-holed sphere.  Then by
Theorem~\ref{bc implies bg}, $\calF_K$ projects into a compact subset
of $\calM(S)$ and hence a compact subset of $\calM^1(S)$, so the
closure $\overline \calF_K$ has compact image in its projection to
$ \calM^1(S)$.

For $K$ sufficiently large, the set $\calF_K$ contains pseudo-Anosov
axes by \cite{Daskalopoulos:Wentworth:mcg}, 
and thus its projection to $\calM^1(S)$ contains closed
orbits.  Let $\calO_K$ denote the collection of closed orbits in the
projection of $\calF_K$ to $\calM^1(S)$.  Then the closure $\closure
\calO_K$ is a compact geodesic-flow-invariant  subset of $\calM^1(S)$.

The asymptotic growth rate for the number of closed geodesics in
$\calO_K$ can estimated from below by a direct construction of a
family of pseudo-Anosov elements of $\Mod(S)$ with $K$-bounded
combinatorics.  

We build this family using a construction of Thurston
\cite[Thm. 7]{Thurston:mcg} as follows.  A pair of (isotopy classes
of) simple closed curves $\alpha$ and $\beta$ {\em bind the surface}
$S$ if given representatives $\alpha^*$ and $\beta^*$ on $S$ for which
$i(\alpha, \beta) = |\alpha^* \cap \beta^*|$ each component of $S
\setminus ( \alpha^* \cup \beta^*$) is either a disk or an annulus
that retracts to a boundary component of $S$. 

The pair of curves determines a {\em Teichm\"uller disk},
$\Delta_{(\alpha, \beta)}$ an isometrically embedded copy of $\half^2$
(in the Teichm\"uller metric), and a representation $\rho$ of the
group $\langle \tau_\alpha, \tau_\beta \rangle$ generated by Dehn
twists about $\alpha$ and $\beta$ into the stabilizer of
$\Delta_{(\alpha, \beta)}$ in $\Mod(S)$, which naturally acts
isometrically on $\Delta_{(\alpha, \beta)}$.  The representation
$\rho$, which simply restricts the Dehn-twists as isometries of
$\Teich(S)$ to the disk $\Delta_{(\alpha,\beta)}$, has the property
that a given $\varphi \in \Mod(S)$ is of finite order, reducible, or
pseudo-Anosov, according to whether it has image $\rho(\varphi)$
an elliptic, parabolic, or hyperbolic element of $\PSL_2(\reals)$.

In Thurston's construction, $\rho$ sends the Dehn twists $\tau_\alpha$
and $\tau_\beta$ to the elements
\begin{displaymath}
\rho(\tau_\alpha) = \left[ 
\begin{array}{cc}
1 & k \\
0 & 1 
\end{array}\right]
\ \ \ \text{and} 
\ \ \ 
\rho(\tau_\beta) = \left[ 
\begin{array}{cc}
1 & 0 \\
-k & 1 
\end{array}\right]
\end{displaymath}
where $k = i(\alpha,\beta)$ is the intersection number for the binding
pair $(\alpha,\beta)$.

As the trace $\tr(\rho(\tau_\alpha \compos \tau_\beta^{-1}))$ is
greater than $2$, the element $\tau_\alpha \compos \tau_\beta^{-1}$ is
pseudo-Anosov.  Thurston observes, moreover, that for $k\ge2$ the group
$\rho(\langle \tau_\alpha, \tau_\beta \rangle)$ is free and $\rho$ is
faithful, so $\langle \tau_\alpha, \tau_\beta \rangle$ is free.

Given $n$ positive integers $q_1, \ldots, q_{n}$, and letting
\begin{equation}
\psi_{(q_1,\ldots,q_{n})} 
= \tau_\alpha^{q_1} \compos \tau_\beta^{-q_1} \compos \ldots \compos
\tau_\alpha^{q_n} \compos \tau_\beta^{-q_n}
\label{pseudos}
\end{equation}
one may compute directly that $\tr( \rho(\psi_{(q_1,\ldots,q_{n})}) )$
is strictly greater than $2$ and therefore that $\psi =
\psi_{(q_1,\ldots,q_{n})} $ is pseudo-Anosov.

Since $\langle \tau_\alpha, \tau_\beta \rangle$ is free, given $q_j
\in [1,B]$ for $B > 1$, the conjugacy class of the element
$\psi_{(q_1,\ldots,q_{n})}$ is uniquely determined by the $n$-tuple
$\{q_1,\ldots,q_{n}\}$ up to cyclic permutation, so the the number of
distinct conjugacy classes of pseudo-Anosov mapping classes that arise
from this construction is $B^n/n$.

We claim the combinatorics of the stable and unstable laminations for
$\psi$ are bounded in terms of $B$ so there is a $K = K(B)$ for which
the axes of all such pseudo-Anosov mapping classes lie in $\calF_K$.
To see this, note that the attracting and repelling fixed points for
the hyperbolic element $\rho(\psi)$ in $\PSL_2(\reals)$ are real
numbers with continued fraction expansion whose entries are bounded by
$kB$ (see \cite{Series:farey}), and thus the axis projects into a
compact subset of $\half^2 /\rho(\langle \tau_\alpha, \tau_\beta
\rangle )$ depending only on $B$.  Since the inclusion
$\Delta_{(\alpha,\beta)} \includesin \Teich(S)$ is an isometry for the
Teichm\"uller metric, the axis for $\rho(\psi)$ includes as a geodesic
into $\Teich(S)$ representing the invariant axis for $\psi$ in the
Teichm\"uller metric.  As the axis projects to a compact subset of
$\calM(S)$, it follows from the main theorem of
\cite{Rafi:Teich:model} that its stable and unstable laminations have
bounded combinatorics, with bound $K = K(B)$ depending only on $B$.

By the upper bound on Weil-Petersson distance in
Theorem~\ref{theorem:pants:quasi}, there is a constant $C$ so that the
Weil-Petersson translation distance of $\psi$ is bounded above by
$nC$.  It follows that the asymptotic growth rate
$p_\varphi(\closure{\calO_K})$ of the number of closed geodesics in
$\closure{\calO_K}$ is bounded below by
$$\liminf_{n \to \infty}\frac{n \log(B) - \log(n)}{nC}$$  
which tends to infinity with $B$.  Thus the family of compact sets
$\closure{\calO_{K(B)}}$, has arbitrarily large asymptotic growth
rates for their periodic orbits.
\end{proof}




We now show the following.
\begin{lemma}{flow expansive}
The restriction of the Weil-Petersson geodesic flow to any compact
invariant set is expansive.
\end{lemma}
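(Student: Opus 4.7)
The plan is to derive expansiveness from the rigidity supplied by the main results of the preceding sections: every bi-infinite geodesic whose projection lies in a compact invariant set $\calK \subset \calM^1(S)$ is cobounded and hence recurrent, and by Theorem~\ref{relc} such a geodesic is determined up to reparametrization by its pair of ending laminations in $\el(S)$.  The idea is to show that if two orbits remain close for all time, they must share both ending laminations and therefore coincide as orbits in $\calM^1(S)$.

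First I would fix the constants.  Let $\til\calK \subset T^1\Teich(S)$ denote the lift of $\calK$.  By Theorem~\ref{bg implies bc} every bi-infinite geodesic tangent to a vector of $\til\calK$ satisfies $K$-bounded combinatorics for some $K$ depending only on $\calK$, and by our hypothesis these geodesics lie in the $\epsilon$-thick part of $\Teich(S)$ for some uniform $\epsilon>0$.  I would then fix $\delta>0$ small enough that (a) $\delta$ is less than half the infimum of the injectivity radius of the $\Mod(S)$-action on the $\epsilon$-thick part, which is positive by compactness of $\calK$; and (b) any two points of the $\epsilon$-thick part within Weil-Petersson distance $\delta$ of each other share at least one common Bers pants decomposition, which is possible since Bers pants decompositions are locally finite on the thick part and are constant on small enough neighborhoods.

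Now suppose expansiveness fails for this $\delta$, so that some distinct orbits through $v_1, v_2 \in \calK$ and some continuous surjection $f\colon \reals \to \reals$ with $f(0)=0$ satisfy $d_\calM(\varphi^t(v_1),\varphi^{f(t)}(v_2))<\delta$ for all $t\in \reals$.  Condition (a) would let me lift both orbits uniquely to $T^1\Teich(S)$, producing bi-infinite Weil-Petersson geodesics $\geod_1,\geod_2$ in (a lift of) $\til\calK$ with $\dwp(\geod_1(t),\geod_2(f(t)))<\delta$ for every $t$, while no element of $\Mod(S)$ takes $\geod_1$ to a time-translate of $\geod_2$.  By condition (b), for each $t$ the surfaces $\geod_1(t)$ and $\geod_2(f(t))$ share a common Bers pants decomposition $P(t)$.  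Extracting a projective limit $[\mu]\in \pl(S)$ of $[P(t_n)]$ for $t_n\to +\infty$ and applying Proposition~\ref{proposition:continuity} to each of the two rays would yield a measured lamination $\mu$ whose support is contained in both $\lambda^+(\geod_1)$ and $\lambda^+(\geod_2)$.  Since bounded combinatorics forces both forward ending laminations to fill the surface, this shared sublamination forces $\lambda^+(\geod_1)=\lambda^+(\geod_2)$; symmetrically $\lambda^-(\geod_1)=\lambda^-(\geod_2)$.

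Having matched the end invariants at both ends, I would invoke Theorem~\ref{relc} on each of the forward and backward rays together with the visibility of bi-infinite recurrent geodesics from \cite[Thm.~1.3]{\BMMI} to conclude that $\geod_2$ is a time translate of $\geod_1$.  This contradicts the assumption that $v_1,v_2$ lie on distinct orbits in $\calM^1(S)$.  The main obstacle is the step matching the two ending laminations: it hinges on using local finiteness of Bers pants decompositions on the thick part to turn proximity in $\Teich(S)$ into genuinely shared combinatorial data along the geodesics, and then on the continuity of intersection number (via Proposition~\ref{proposition:continuity}) to promote shared data along a sequence into equality of the filling ending laminations in the limit.
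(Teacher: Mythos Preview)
Your approach is workable, but it is considerably more elaborate than the paper's and carries a genuine gap in condition~(b).

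\textbf{Comparison with the paper.}  The paper's proof proceeds exactly as your first step does: choose $\delta$ small enough that nearby vectors in $\calM^1(S)$ have unique nearby lifts to $T^1\Teich(S)$, lift the orbit of $x$, and use closeness to lift the orbit of $y$ to a second bi-infinite geodesic in $\Teich(S)$ that stays within $\delta$ of the first.  At that point, however, the paper finishes in one line: distinct bi-infinite Weil--Petersson geodesics in $\Teich(S)$ diverge in either forward or backward time (the metric has strictly negative sectional curvature, so the $\CAT(0)$ flat-strip theorem rules out parallel geodesics), hence the two lifts parametrize the same geodesic and $y=\varphi^{t_0}(x)$.  No ending laminations, no Proposition~\ref{proposition:continuity}, no Theorem~\ref{relc}, no visibility.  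Your route rederives this divergence of geodesics through the entire asymptotic machinery of \cite{\BMMI}, which is logically fine but circuitous.

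\textbf{The gap in condition~(b).}  Your claim that the set of Bers pants decompositions is ``constant on small enough neighborhoods'' is not correct: if some curve $\alpha$ in a Bers pants decomposition of $X$ has length exactly $L_S$, then at arbitrarily close $X'$ it may exceed $L_S$ and $P$ ceases to be Bers for $X'$, while there is no reason the Bers pants decompositions of $X'$ should be Bers for $X$.  So you cannot guarantee a \emph{common Bers} pants decomposition.  The fix is easy: by continuity of length functions and compactness of the thick part, there is $\delta>0$ so that any Bers pants decomposition $P$ of $X$ has every curve of length at most $L_S+1$ on any $X'$ with $\dwp(X,X')<\delta$.  Then $P(t_n)$ consists of curves of uniformly bounded length at $\geod_2(f(t_n))$, and convexity of length along Weil--Petersson geodesics together with Lemma~4.5 of \cite{\BMMI} (rather than Proposition~\ref{proposition:continuity}, whose hypotheses concern converging rays and require literal Bers pants decompositions at the endpoints) shows that any projective limit is supported on $\lambda^+(\geod_2)$.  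With that adjustment your argument goes through; but note that the paper's one-line invocation of negative curvature accomplishes the same with far less overhead.
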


\begin{proof}


  Let $K$ be a compact invariant subset of $\calM^1(S)$.  There is
  $\delta=\delta(K)>0$ such that if $x$ and $y$ lie in $\calM^1(S)$ are a pair
  of points with $d(x,y)<\delta$ and $\tilde x$ is a lift of $x$ to
  $T^1 \Teich(S)$, then there exists a unique lift $\tilde y$ of $y$
  to $T^1 \Teich(S)$, such that $d(\tilde{x},\tilde{y})<\delta$.

Assume now we have a continuous
surjective function $f \colon \reals \to \reals$ for which
$$
d(\varphi^t(x),\varphi^{f(t)}(x)) < \delta.
$$
Consider any $x,y$ with the property that 
$$
d(\varphi^t(x),\varphi^{f(t)}(y)) < \delta
$$ for all $t$.

Let $\tilde \varphi^t(x)$ be a lift of $\varphi^t(x)$, and for each
$t$ find the unique vector $w(f(t))$ in $T^1 \Teich(S)$ such that
$$
d(\tilde\varphi^t(x),w(f(t)))<\delta
$$ and such that
$\varphi^{f(t)}(y)$ is the projection of $w(f(t))$ to $\calM^1(S)$.
This gives a path $w(f(t))$ in $T^1 \Teich(S)$.  Since its projection
to $ \calM^1(S)$ yields a geodesic in $\calM(S)$, it follows that
$w(f(t))$  projects to a geodesic in $\Teich(S)$.  

As $w(f(t))$ and
$\tilde\varphi^t(x)$ remain
at uniformly bounded distance in $T^1 \Teich(S)$, we conclude that
their projections to $\Teich(S)$ remain at bounded distance as well.

Since distinct bi-infinite Weil-Petersson geodesics in $\Teich(S)$
diverge, in either forward or backward time, we conclude that
$\tilde\varphi^t(x)$ and $w(f(t))$ are parametrizations by arclength of the same
geodesic, and thus we may conclude that there is a $t_0$ for which 
$$
\varphi^{t_0} (x) = y.
$$
It follows that the restriction of the flow to $K$ is expansive.




\end{proof}

\begin{proof}[Proof of Theorem~\ref{theorem:entropy}]
Theorem~\ref{theorem:entropy} follows immediately as a
direct consequence of Theorem~\ref{growth rate},
Lemma~\ref{flow expansive} 
and Proposition~\ref{growth and entropy}.
\end{proof}

\bibliographystyle{math}
\bibliography{math}

{\sc \small
 \bigskip

\noindent Brown University \bigskip

\noindent University of Chicago  \bigskip

\noindent Yale University

}

\end{document}